\numberwithin{equation}{section}
\newtheorem{theorem}{Theorem}[section]
\newtheorem{proposition}[theorem]{Proposition}
\newtheorem{lemma}[theorem]{Lemma}
\newtheorem{corollary}[theorem]{Corollary}
\theoremstyle{definition}
\newtheorem{definition}[theorem]{Definition}
\newtheorem{remark}[theorem]{Remark}
\newtheorem{assumption}[theorem]{Assumption}
\newcommand\abs[1]{\left\lvert#1\right\rvert}
\numberwithin{equation}{section}
\def \dis {\displaystyle}
\def \R {\mathbb{R}}
\def \X {\mathbb{X}}
\def \C {\mathcal{C}}
\def \C {{\mathcal C}}
\def \U {{\mathcal U}}
\def \hvarphi \hat{\varphi}
\def \dx{\mathrm{d}x}
\def \dxp{\mathrm{d}x'}
\def \dt{\mathrm{d}t}
\def \ds{\mathrm{d}s}
\def \dq {\mathrm{d}x \, \mathrm{d}t}
\def \d {\mathrm{d}}
\def \R {\mathbb{R}}
\def \X {\mathbb{X}}
\def \C {\mathcal{C}}
\def \X {\mathbb{ X}}
\def \U {\mathcal{U}}
\def \C {{\mathcal C}}
\keywords{Optimal control, nonlocal nonlinearity, first- and second- order optimality conditions}
\subjclass[2010]{35K58, 49K20,90C46, 92D25}
\begin{document}
	\title[Optimal control of a parabolic equation with a nonlocal nonlinearity]{Optimal control of a parabolic equation with a nonlocal nonlinearity}
\author{Cyrille Kenne }
\address{Cyrille Kenne, Laboratoire L3MA, UFR STE et IUT, Universit\'e
	des Antilles, 97275 Schoelcher, Martinique}
\email{kenne853@gmail.com}
\author{Landry Djomegne }
\address{Landry Djomegne, University of Dschang, BP 67 Dschang, Cameroon, West region}
\email{landry.djomegne@gmail.com}
\author{Gis\`{e}le Mophou}
\address{Gis\`ele Mophou, Laboratoire LAMIA, D\'epartement de Math\'ematiques et Informatique, Universit\'{e} des Antilles, Campus Fouillole, 97159 Pointe-\`a-Pitre,(FWI), Guadeloupe, Laboratoire  MAINEGE, Universit\'e Ouaga 3S, 06 BP 10347 Ouagadougou 06, Burkina Faso.}
\email{gisele.mophou@univ-antilles.fr}
	\date{\today}

	\begin{abstract}
		This paper proposes an optimal control problem for a parabolic equation with a nonlocal nonlinearity. The system is described by a parabolic equation involving a nonlinear term that depends on the solution and its integral over the domain. We prove the existence and uniqueness of the solution to the system and the boundedness of the solution.  Regularity results for the control-to-state operator, the cost functional and the adjoint state are also proved. We show the existence of optimal solutions and derive first-order necessary optimality conditions. In addition, second-order necessary and sufficient conditions for optimality are established.
	\end{abstract}
	\maketitle	
	
\section{Introduction}
Let $\Omega\subset \R^n$ ($n\geq 1$) be an open bounded set, with regular boundary $\partial \Omega$. Let  $\omega$ is a nonempty subset of $\Omega$. We set $Q:=\Omega\times (0,T)$, $\Sigma=\partial\Omega\times (0,T)$ and $\omega_T =\omega\times (0,T )$, where $T>0$.

In this paper, we investigate the following control problem:
\begin{align}\label{opt}
	\dis \min_{v\in {\mathcal{U}_{ad}}}J(y,v),
\end{align}
with
\begin{align}\label{opt1}
	\dis J(y,v):=\frac 12\int_Q \abs{y-y_d}^2\,\dq+\frac{\mu}{2}\int_{\omega_T} \abs{v}^2\,\dq,
\end{align}
where   $y_d\in L^\infty(Q),$  $\mu>0$ and the set of admissible controls is given by	
\begin{equation}\label{defuad}
	\mathcal{U}_{ad}:=\left\{w\in L^\infty(\omega_T): \alpha\leq w\leq \beta,\, \alpha,\beta\in \R,\, \beta>\alpha\right\}.
\end{equation}
The state $y:=y(v)$ satisfies the following parabolic equation with a nonlinear term, nonlocal in space:
\begin{equation}\label{model}
	\left\{
	\begin{array}{rllll}
		\dis y_{t}-\Delta y+a\left(l(y(t))\right)y &=&v\chi_{\omega}& \mbox{in}& Q,\\
		\dis  \partial_\nu y&=&0& \mbox{in}& \Sigma, \\
		\dis  y(0,\cdot)&=&y^0 &\mbox{in}&\Omega.
	\end{array}
	\right.
\end{equation}
In the above system, $\partial_{\nu}$ is the usual normal derivative, $v\in L^2(\omega_T)$ is the control function , $\chi_{\omega}$  denotes the characteristic function of the control set $\omega$ and $y^0\in L^2(\Omega)$ is the initial data. The function $l$ is defined by
\begin{equation}\label{defl}
	l(y(t))=\int_{\Omega}y(t,x')\,\dxp.
\end{equation}

Recently, nonlocal problems have become increasingly considered by researchers. They have been
used for many application, for instance, in population dynamics \cite{chipot2000, chipot1999, correa2010,furter1989}, biology \cite{calsina1995, szymanska2009},  and physics \cite{chafee1981,lacey1995}, so they are of great importance and interest.  In the case of population dynamics, the most straightforward way of introducing non-local effects is by considering the crowding effect \cite{furter1989}. In this case, the state $y$ of \eqref{model} can be interpreted as a density of population and the system \eqref{model} describes the evolution of this population according to the fact that $v$ is a supply and $a\left(l(y(t))\right)$ proportional to the density of population by a factor $a$ represents the density of death or extinction of the population at stake \cite{chang2003a}. The term $l$ defined in \eqref{defl} represents the total population at any time $t$. As an example, we mention the population of bacteria in which the price of life in crowded regions could drop dramatically. \par
We make the following assumptions on the real function $a$:
\begin{assumption}\label{ass1}
	$ $
	\begin{itemize}
		\item The mortality rate of species increases with density. That is $r\mapsto a(r)$ is a non-decreasing function.
		\item The function $a$ is such that
		\begin{equation}\label{as1}
			\begin{array}{llll}
				\dis a\in \mathcal{C}^2(\R) \;\; \text{and} \;\; \exists \ a_0,a_1,M>0 \;\;\text{such that}\, \, 0<a_0\leq a(r)\leq a_1 \;\;\text{and}\\
				|a'(r)|+|a''(r)|\leq M \;\; \text{for all}\; r\in \R.
			\end{array}
		\end{equation}
	\end{itemize}
\end{assumption}
\begin{remark} We observe that the assumption $0 <  a_0 \leq a(r) \leq a_1$ for all $r\in \R$ is not too restrictive. Indeed, one can replace this assumption by: there exists $\alpha>0$ such that 
	\begin{equation*}
		|a(r)|\leq \alpha \;\;\text{for all}\;\; r\in \R,
	\end{equation*}
	and then make the classical change of variables $z:=e^{-kt}y$ in \eqref{model} for some $k>0$ to obtain:
	$$ \dis z_{t}-\Delta z+\left[k+a\left(\int_{\Omega}e^{kt}z(\cdot,x)\,dx\right)\right]z =e^{-kt}v\chi_{\omega}.$$
	Consequently, by choosing $k$ sufficiently large, we can always assume that this assumption holds.
\end{remark}

Optimal control problems have been widely studied in various fields of science \cite{anita2000,anicta2011, lenhart2007} and engineering \cite{ben2010optimal, geering2007, yang1975} due to their practical applications in designing optimal control strategies for complex systems. Nonlinear optimal control problems are particular challenging, since the associated objective function can be non convex, leading to multiple local minima. In addition, the first order necessary optimality conditions should be complemented by a second order analysis. 

Second-order analysis for nonlinear partial differential equations has been studied by several authors, see e.g. \cite{antil2020op,bayen2014, bonifacius2018,casas2018analysis, casas2020, casas2012, casas2016second,hoppe2022} and the references therein. This type of analysis is used to ensure the stability of optimal solutions with respect to perturbations of problems such as finite element discretizations \cite{casas2015a}. Regarding the second-order analysis for optimal control problems such as \eqref{opt}-\eqref{model}, we refer to \cite{casas2018} where the authors studied the measure control of a semilinear equation with a nonlocal time delay, and to the very recent paper \cite{casas2023} where the authors investigated the control of a parabolic equation with memory.

In this paper, we consider an optimal control problem for a parabolic equation with a nonlocal nonlinearity. The system is described by a partial differential equation involving a nonlinear term that depends on the solution and its integral over the domain. The nonlocal nonlinearity in the system arises due to the presence of the spatial integral operator, which makes the system more complex and challenging to analyze. Let us point some of them:
\begin{itemize}
	\item In general, the maximum principle may not hold for nonlocal equations such as \eqref{model}. The reason for this is that nonlocal equations involve integral operators that account for the effect of the entire domain, rather than just the local behavior at a given point. Hence, we carefully establish the boundedness of the solution to \eqref{model} in theorems \ref{theoremexistence0} and \ref{theorembound}.
	\item Unlike in other types of problems, the adjoint system \eqref{adjoint} is not of the same type as \eqref{model}. Consequently, the well-posedness of the adjoint problem and the boundedness of its solution cannot be deduced from the well-posedness of \eqref{model} and the boundedness of its solution. Therefore, it is necessary to establish these results separately, as done in Proposition \ref{existenceadjoint} and theorems \ref{theoremexistence} and \ref{theorembound0}.
\end{itemize}

Our objective is to minimize the cost functional \eqref{opt1},  including the difference between the solution and a desired target, as well as the $L^2$-norm of the control. We aim to prove the existence and uniqueness of the solution to the system \eqref{model} and derive necessary optimality conditions for the control problem \eqref{opt}-\eqref{model}. 

The rest of this paper is organized as follows. In Section \ref{wellposedness}, we give some notations and definitions of functional spaces and their associated norms for the need of this work; we prove some results on the existence, the uniqueness and the boundedness of the weak solution to \eqref{model}. In Section \ref{control}, we prove the existence of  optimal solutions to \eqref{opt}-\eqref{model}. First and second-order conditions for optimality are derived in Section  \ref{sec-5}. Finally, further directions are pointed out in Section \ref{conclusion}.

\

\section{Preliminary results\label{wellposedness}}
We start this section by introducing some functional spaces needed in the sequel.

Let $(H^{1}(\Omega))^\star$ be the topological dual space of $H^1(\Omega)$ and we denote by $\left\langle \cdot,\cdot\right\rangle$ the dual product between $H^1(\Omega)$ and $(H^{1}(\Omega))^\star$.
If we set
\begin{equation}\label{defWTA}
	W(0,T)= \left\{\rho \in L^2(0,T;H^1(\Omega)); \rho_t\in L^2\left(0,T;(H^{1}(\Omega))^\star\right)\right\},
\end{equation}
then $W(0,T)$ endowed with the norm
\begin{equation}\label{}
	\|\rho\|^2_{	W(0,T)}=\|\rho\|^2_{L^2(0,T;H^1(\Omega))}+\|\rho_t\|^2_{L^2\left(0,T;(H^{1}(\Omega))^\star\right)}
\end{equation}
is a Hilbert space. Moreover, we have that the following embedding is continuous
\begin{equation}\label{contWTA}
	W(0,T)\subset C([0,T];L^2(\Omega)),
\end{equation}
and the following embedding
\begin{equation}\label{comp}
	W(0,T)\subset L^2(Q)
\end{equation}
is compact.
The following result is classical and is obtained directly  from the Gagliardo-Niremberg Theorem (see e.g \cite{ breitenbach2019, nirenberg2011}).
\begin{lemma}\label{lemmaimpt}
	Let $y\in W(0,T)$. Then, $y\in L^{\sigma}(Q)$, with $\sigma=\frac{2(n+2)}{n}$. In addition, there exists a constant $C=C(n)$ such that the following estimate
	\begin{equation}\label{ineq100}
		\int_Q|y|^\sigma\,\dq\leq C\|y\|^{\frac{4}{n}}_{L^\infty(0,T;L^{2}(\Omega))}\int_{Q}|\nabla y|^2\,\dq
	\end{equation}
	holds.
\end{lemma}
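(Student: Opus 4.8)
The plan is to deduce \eqref{ineq100} from the classical Gagliardo--Nirenberg inequality applied on the spatial domain $\Omega$ for almost every fixed time $t\in(0,T)$, followed by an integration in time. Recall that, $\sigma=\frac{2(n+2)}{n}$ being subcritical, for every $u\in H^1(\Omega)$ the Gagliardo--Nirenberg inequality gives
$$\|u\|_{L^\sigma(\Omega)}\le C\,\|\nabla u\|_{L^2(\Omega)}^{\theta}\,\|u\|_{L^2(\Omega)}^{1-\theta},\qquad \theta:=n\Big(\tfrac12-\tfrac1\sigma\Big)=\frac{n}{n+2}\in(0,1),$$
with $C=C(n)$ depending only on $n$ (and on the fixed regular domain $\Omega$).

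First I would apply this to $u=y(t,\cdot)$, which lies in $H^1(\Omega)$ for a.e. $t$ since $y\in L^2(0,T;H^1(\Omega))$, and raise the resulting inequality to the power $\sigma$:
$$\|y(t)\|_{L^\sigma(\Omega)}^\sigma\le C\,\|\nabla y(t)\|_{L^2(\Omega)}^{\sigma\theta}\,\|y(t)\|_{L^2(\Omega)}^{\sigma(1-\theta)}.$$
The value of $\theta$ is chosen precisely so that the exponents close up: $\sigma\theta=\frac{2(n+2)}{n}\cdot\frac{n}{n+2}=2$ and $\sigma(1-\theta)=\sigma-2=\frac{4}{n}$. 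Bounding the $L^2$-factor by its essential supremum in time then gives, for a.e. $t\in(0,T)$,
$$\|y(t)\|_{L^\sigma(\Omega)}^\sigma\le C\,\|y\|_{L^\infty(0,T;L^2(\Omega))}^{4/n}\,\|\nabla y(t)\|_{L^2(\Omega)}^{2}.$$

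It then remains to integrate this estimate over $t\in(0,T)$, which yields
$$\int_Q|y|^\sigma\,\dq=\int_0^T\|y(t)\|_{L^\sigma(\Omega)}^\sigma\,\dt\le C\,\|y\|_{L^\infty(0,T;L^2(\Omega))}^{4/n}\int_Q|\nabla y|^2\,\dq,$$
which is exactly \eqref{ineq100}. The finiteness of the right-hand side, and hence the membership $y\in L^\sigma(Q)$, follows from $y\in W(0,T)$: the continuous embedding \eqref{contWTA} gives $y\in C([0,T];L^2(\Omega))$, so $\|y\|_{L^\infty(0,T;L^2(\Omega))}<\infty$, while $\int_Q|\nabla y|^2\,\dq\le\|y\|_{L^2(0,T;H^1(\Omega))}^2<\infty$ by the very definition \eqref{defWTA} of $W(0,T)$.

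I do not expect any genuine obstacle here, the argument being a routine interpolation. The only points that deserve a little attention are: (i) verifying that the Gagliardo--Nirenberg exponent $\theta=\frac{n}{n+2}$ lies in $(0,1)$ uniformly over the dimensions $n\ge1$ allowed by the statement, and that the gradient-only form of the inequality is available because $\sigma$ is subcritical; and (ii) carrying out the exponent arithmetic so that the gradient factor enters with the power exactly $2$ and the $L^2$-factor with the power exactly $4/n$, which is what makes the time integration reproduce precisely $\|y\|_{L^\infty(0,T;L^2(\Omega))}^{4/n}\int_Q|\nabla y|^2\,\dq$ rather than an inhomogeneous quantity involving the full $H^1$-norm.
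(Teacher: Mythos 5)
Your argument is exactly the route the paper intends: the paper gives no written proof of this lemma, it simply invokes the Gagliardo--Nirenberg theorem, and your slice-in-time application of Gagliardo--Nirenberg followed by integration over $(0,T)$ (with the exponent arithmetic $\sigma\theta=2$, $\sigma(1-\theta)=4/n$) is the standard way that citation is meant to be unpacked, so in that sense the proposal matches the paper's proof.

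One caveat deserves mention, though it is inherited from the statement rather than created by you: the homogeneous form $\|u\|_{L^\sigma(\Omega)}\le C\|\nabla u\|_{L^2(\Omega)}^{\theta}\|u\|_{L^2(\Omega)}^{1-\theta}$ is \emph{not} valid for all $u\in H^1(\Omega)$ on a bounded domain -- a nonzero constant makes the right-hand side vanish -- and subcriticality of $\sigma$ does not repair this; the gradient-only version requires $H^1_0$ (or zero mean), while for general $H^1(\Omega)$ one only has the interpolation with the full $H^1$-norm, $\|u\|_{L^\sigma}\le C\|u\|_{H^1}^{\theta}\|u\|_{L^2}^{1-\theta}$, which after integration produces an extra lower-order term $\|y\|^{4/n}_{L^\infty(0,T;L^2(\Omega))}\int_Q|y|^2\,\dq$ on the right of \eqref{ineq100}. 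Since the functions to which the lemma is later applied (e.g.\ $(z-k)^+$ in the Neumann setting) have no zero trace, the estimate as literally stated has this same defect; your proof is therefore as correct as the lemma itself, but the step ``gradient-only Gagliardo--Nirenberg on $\Omega$ for every $u\in H^1(\Omega)$'' should be flagged and either replaced by the $H^1$-norm version (accepting the additional term) or justified by whatever normalization the application provides.
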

We will also need the following result for the proof of the boundedness of solutions to \eqref{model}.
\begin{theorem}\cite[Lemma 4.1.1]{wu2006}\label{thmimp}
	Let $\varphi$ be a non-negative and non-increasing function on $[k_0,\infty)$ such that
	\begin{equation}\label{ineq2}
		\varphi(m)\leq \left(\frac{C}{m-k}\right)^{\alpha}(\varphi(k))^{\beta},\;\;\;\; \forall m>k> k_0,	
	\end{equation}
	where $C$, $\alpha, \beta$ are positive constants with $\beta>1$. Then
	$\varphi(m)=0$ for all $m\geq k_0+d$, where $d:=C2^{\frac{\beta}{\beta-1}}(\varphi(k_0))^{\frac{\beta-1}{\alpha}}$.
\end{theorem}

\section{Existence results}\label{existence}
In this subsection, we prove the existence and uniqueness of a class of parabolic systems. Let $y^0\in L^2(\Omega)$, $\tilde{a}_0\in L^\infty(Q)$ and $f\in L^2(Q)$. We consider the following problem:
\begin{equation}\label{general}
	\left\{
	\begin{array}{rllll}
		\dis y_t-\Delta y+a(l(y(t)))y+\tilde{a}_0\int_{\Omega}y(t,\sigma)\d\sigma &=& f &\mbox{in}& Q,\\
		\dis \partial_{\nu}y&=&0  &\mbox{in}& \Sigma,\\
		y(0,\cdot)&=& y^0 &\mbox{in}& \Omega.
	\end{array}
	\right.
\end{equation}

We define the weak solution to the system \eqref{general} as follows.
\begin{definition}\label{weakgen}
	Let $f\in L^2(Q)$, $\tilde{a}_0\in L^\infty(Q)$ and $y^{0}\in L^2(\Omega)$. Let $a(\cdot)$  satisfying the Assumption \ref{ass1}.  We  say that a function
	$y\in L^2(0,T;H^1(\Omega))$ is a weak solution to \eqref{general}, if the following equality
	\begin{equation}\label{weakgene}
		\begin{array}{lll}
			\dis -\int_{0}^{T} \left\langle \phi_t,y\right\rangle \, \dt+ \int_Q \nabla \phi\nabla y\, \dq+\int_Q a\left(l(y(t))\right) \phi y\, \dq+\int_Q\left(\tilde{a}_0 \dis \int_{\Omega}y(t,\sigma)\d\sigma\right)\phi\, \dq\\
			\qquad =
			\dis \int_\Omega y^0\,\phi(0,\cdot)\, \dx+\int_{Q}f\phi\,\dq ,
		\end{array}
	\end{equation}
	holds for every
	\begin{equation}\label{defH}
		\phi \in H(Q):=\{\varphi\in W(0,T): \varphi(T,\cdot)=0\}.
	\end{equation}
\end{definition}
In the rest of the paper, we denote by $C$ a positive constant that may vary line to line.
We have the following result.
%

\begin{theorem}\label{theoremexistence}
	Let $f\in L^2(Q)$, $\tilde{a}_0\in L^\infty(Q)$ such that $\|\tilde{a}_0\|_{L^\infty(Q)}\leq \tilde{C}$, for some $\tilde{C}>0$ and $y^{0}\in L^2(\Omega)$. Let $a(\cdot)$ satisfying the Assumption \ref{ass1}.  Then, there exists a unique weak solution $y \in W(0,T)$ to \eqref{general} in the sense of Definition~\ref{weakgen}.
	In addition, there exists a constant $C:=C(|\Omega|,a_0,\tilde{C},T)>0$ such that
	\begin{equation}\label{estimationgen}
		\|y\|_{\C([0,T];L^2(\Omega))}+ \|y\|_{L^2(0,T;H^1(\Omega))} \leq C\left(\|y^0\|_{L^2(\Omega)}+\|f\|_{L^2(Q)}\right)
	\end{equation}
	and
	\begin{equation}\label{estimationge}
		\|y\|_{W(0,T)}\leq C\left(\|y^0\|_{L^2(\Omega)}+\|f\|_{L^2(Q)}\right).
	\end{equation}
\end{theorem}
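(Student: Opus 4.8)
The plan is to build the solution by a Faedo--Galerkin approximation, to pass to the limit with the help of the compact embedding \eqref{comp}, and then to prove uniqueness by a Gronwall estimate on the difference of two solutions. I would pick a family $\{w_j\}_{j\ge1}$ that is orthonormal in $L^2(\Omega)$ and orthogonal in $H^1(\Omega)$ (e.g.\ the Neumann eigenfunctions of $-\Delta$), set $V_m:=\operatorname{span}\{w_1,\dots,w_m\}$, and look for $y_m(t)=\sum_{j=1}^m g_{jm}(t)w_j$ solving the Galerkin projection of \eqref{general} with $y_m(0)=P_m y^0$, where $P_m$ is the $L^2(\Omega)$-orthogonal projection onto $V_m$. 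Since $r\mapsto a(r)$ is continuous and bounded and $l$ is a bounded linear functional on $L^2(\Omega)$, the associated ODE system has a continuous right-hand side, hence admits a local solution by Cauchy--Peano, made global by the estimate below. Testing the projected equation with $y_m$, keeping (or discarding) the nonnegative term $\int_\Omega a(l(y_m))y_m^2$, and using $|l(y_m(t))|\le|\Omega|^{1/2}\|y_m(t)\|_{L^2(\Omega)}$ together with $\|\tilde a_0\|_{L^\infty(Q)}\le\tilde C$ — so that the nonlocal term is controlled by $\tilde C|\Omega|\,\|y_m(t)\|_{L^2(\Omega)}^2$ — and Young's inequality on $\int_\Omega f y_m$, I get
\[
\frac12\frac{d}{dt}\|y_m(t)\|_{L^2(\Omega)}^2+\|\nabla y_m(t)\|_{L^2(\Omega)}^2\le C\big(\|y_m(t)\|_{L^2(\Omega)}^2+\|f(t)\|_{L^2(\Omega)}^2\big).
\]
Gronwall's lemma on $[0,T]$ followed by integration in time yields \eqref{estimationgen} for $y_m$, uniformly in $m$, with a constant of the announced form $C(|\Omega|,a_0,\tilde C,T)$. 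Estimating the time derivative by testing the projected equation against $P_m\phi$ for arbitrary $\phi\in H^1(\Omega)$ and using that $P_m$ contracts the $H^1(\Omega)$-norm (since the chosen basis makes it the $H^1(\Omega)$-orthogonal projection as well), one obtains $\|\partial_t y_m\|_{L^2(0,T;(H^1(\Omega))^\star)}\le C(\|y^0\|_{L^2(\Omega)}+\|f\|_{L^2(Q)})$, hence a uniform bound for $\|y_m\|_{W(0,T)}$.

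From these bounds, up to a subsequence, $y_m\rightharpoonup y$ in $L^2(0,T;H^1(\Omega))$, $\partial_t y_m\rightharpoonup\partial_t y$ in $L^2(0,T;(H^1(\Omega))^\star)$, and by \eqref{comp}, $y_m\to y$ strongly in $L^2(Q)$; passing to a further subsequence, $\|y_m(t)-y(t)\|_{L^2(\Omega)}\to0$ and hence $l(y_m(t))\to l(y(t))$ for a.e.\ $t$. In the Galerkin identity — written, after integration by parts in time, against $\phi\in C^1([0,T];V_k)$ with $\phi(T,\cdot)=0$ and $k\le m$ — the linear terms converge by weak convergence and $P_m y^0\to y^0$ in $L^2(\Omega)$, and the linear nonlocal term converges since $\int_\Omega y_m(\cdot,\sigma)\,d\sigma\to\int_\Omega y(\cdot,\sigma)\,d\sigma$ in $L^2(0,T)$. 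For the nonlinear term I would split
\[
a(l(y_m))\,y_m\,\phi-a(l(y))\,y\,\phi=\big[a(l(y_m))-a(l(y))\big]\,y_m\,\phi+a(l(y))\,(y_m-y)\,\phi,
\]
the second summand going to $0$ in $L^1(Q)$ because $a\le a_1$ and $y_m\to y$ in $L^2(Q)$, and the first also going to $0$ in $L^1(Q)$ by dominated convergence: the scalars $a(l(y_m(t)))-a(l(y(t)))\to0$ a.e.\ and are bounded by $2a_1$, while $\|y_m(t)\|_{L^2(\Omega)}\|\phi(t)\|_{L^2(\Omega)}$ is dominated, uniformly in $m$, by an $L^1(0,T)$ function. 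A density argument ($\bigcup_k C^1([0,T];V_k)$ restricted to functions vanishing at $T$ is dense in $H(Q)$) then shows that $y\in W(0,T)$ is a weak solution in the sense of Definition~\ref{weakgen}, and \eqref{estimationgen}, \eqref{estimationge} pass to $y$ by weak lower semicontinuity of the norms. This passage to the limit in the nonlinear nonlocal term is the one genuinely non-routine step: it needs \emph{both} the strong $L^2(Q)$-compactness from \eqref{comp} \emph{and} the a.e.-in-time convergence of the scalars $l(y_m(t))$ before the boundedness and continuity of $a$ can be invoked.

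For uniqueness, let $y_1,y_2\in W(0,T)$ be two weak solutions and $w:=y_1-y_2$. Using the formulation equivalent to \eqref{weakgene} that holds in $(H^1(\Omega))^\star$ for a.e.\ $t$ (legitimate since $w\in W(0,T)$), testing with $\phi=w(t)$, exploiting $\langle w_t(t),w(t)\rangle=\tfrac12\tfrac{d}{dt}\|w(t)\|_{L^2(\Omega)}^2$, and writing $a(l(y_1))y_1-a(l(y_2))y_2=a(l(y_1))w+[a(l(y_1))-a(l(y_2))]y_2$, I obtain
\[
\frac12\frac{d}{dt}\|w(t)\|_{L^2(\Omega)}^2+\|\nabla w(t)\|_{L^2(\Omega)}^2+\int_\Omega a(l(y_1))w^2+\int_\Omega\big[a(l(y_1))-a(l(y_2))\big]y_2\,w+\int_\Omega\Big(\tilde a_0\!\int_\Omega w\,d\sigma\Big)w=0.
\]
Since $\int_\Omega a(l(y_1))w^2\ge0$, using the Lipschitz bound $|a(l(y_1(t)))-a(l(y_2(t)))|\le M|\Omega|^{1/2}\|w(t)\|_{L^2(\Omega)}$ from \eqref{as1}, the embedding \eqref{contWTA} to set $K:=\|y_2\|_{\C([0,T];L^2(\Omega))}<\infty$, and $\|\tilde a_0\|_{L^\infty(Q)}\le\tilde C$, I reach
\[
\frac{d}{dt}\|w(t)\|_{L^2(\Omega)}^2\le C\big(MK|\Omega|^{1/2}+\tilde C|\Omega|\big)\|w(t)\|_{L^2(\Omega)}^2,
\]
and since $w(0,\cdot)=0$, Gronwall forces $w\equiv0$. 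The only subtlety here is that the cross term $[a(l(y_1))-a(l(y_2))]y_2$ must be absorbed using the $\C([0,T];L^2(\Omega))$-regularity of $y_2$, not merely its $L^2(0,T;H^1(\Omega))$-regularity.
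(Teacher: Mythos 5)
Your proof is correct, but the existence part takes a genuinely different route from the paper. The paper proves existence by a Schauder fixed-point argument: for frozen $z\in B\subset L^2(Q)$ it solves the linearized problem \eqref{generalin} (whose well-posedness, with explicit constants, is established separately in Corollary \ref{corollaryexistence} via Lions's theorem in the appendix), shows $F(B)\subset B$, gets compactness of $F(B)$ from the embedding \eqref{comp}, and proves continuity of $F$ using the Lipschitz estimate $|a(l(z_k))-a(l(z))|\leq M|\Omega|^{1/2}\|z_k-z\|_{L^2(\Omega)}$ coming from $|a'|\leq M$ in \eqref{as1}. You instead run a Faedo--Galerkin scheme directly on the nonlinear equation, derive the uniform energy and time-derivative bounds, and pass to the limit in the nonlinear nonlocal term using the same compactness \eqref{comp} but through a.e.-in-time convergence of the scalars $l(y_m(t))$ plus dominated convergence, which only needs continuity and boundedness of $a$ (the bound on $a'$ is reserved for uniqueness). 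What each buys: the paper's route modularizes the analysis (the appendix linear theory carries all the quantitative constants, and the same corollary is reused elsewhere, e.g.\ for \eqref{diff1} and the adjoint), at the cost of invoking Schauder and an auxiliary linear problem; your route is self-contained, produces \eqref{estimationgen}--\eqref{estimationge} directly by lower semicontinuity from the Galerkin bounds, and incidentally avoids the slight overstatement in the paper's Step~1 (Schauder gives existence, not uniqueness, of the fixed point -- uniqueness must come, as in both your argument and the paper's Step~2, from the Gronwall estimate on the difference of two solutions, which in your version and the paper's is essentially identical, your only cosmetic difference being that you absorb the nonlocal term into the Gronwall constant instead of performing the change of variables $z=e^{-rt}(y_1-y_2)$).

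Two routine points you should tighten if you write this out in full: since $f\in L^2(Q)$, the Galerkin ODE system has a right-hand side that is only measurable in $t$, so existence of the $g_{jm}$ should be justified by Carath\'eodory's theorem (and uniqueness by the local Lipschitz character in the unknowns furnished by $|a'|\leq M$) rather than Cauchy--Peano; and the density of $\bigcup_k\{\phi\in C^1([0,T];V_k):\phi(T,\cdot)=0\}$ in $H(Q)$, which you use to recover Definition~\ref{weakgen} for all admissible test functions, is standard but deserves either a reference or the usual alternative of first identifying the equation in $L^2(0,T;(H^1(\Omega))^\star)$ together with $y(0)=y^0$ and then integrating by parts against $\phi\in H(Q)$.
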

\begin{proof} We proceed in two steps.
	
	\noindent \textbf{Step 1.} We prove the existence of $y$ solution to \eqref{general}.
	The proof uses the Schauder fixed point theorem.
	Let us introduce the following convex subset of $L^2(Q)$
	$$B:=\left\{z\in L^2(Q):\|z\|_{L^2(Q)}\leq C_0\right\},$$
	where $\dis C_0:=\frac{e^{\tilde{C}|\Omega|T}}{\sqrt{\min(a_0,1)}}\left(\|y^0\|_{L^2(\Omega)}+\frac{1}{\sqrt{\min(a_0,1)}}\|f\|_{L^2(Q)}\right).$
	Let $z\in B$, we set
	\begin{equation}\label{defF}
		y=F(z)
	\end{equation}
	and consider the following linear problem
	\begin{equation}\label{generalin}
		\left\{
		\begin{array}{rllll}
			\dis y_t-\Delta y+a(l(z(t)))y+\tilde{a}_0\int_{\Omega}y(t,\sigma)\d \sigma &=& f &\mbox{in}& Q,\\
			\dis \partial_{\nu}y&=&0  &\mbox{in}& \Sigma,\\
			y(0,\cdot)&=& y^0 &\mbox{in}& \Omega.
		\end{array}
		\right.
	\end{equation}
	Then using Corollary \ref{corollaryexistence}, we have that \eqref{generalin} has a unique weak solution $y\in W(0,T)$. Moreover, there is a constant $C:=C(|\Omega|,a_0,\|y^0\|_{L^2(\Omega)}, \|f\|_{L^2(Q)})>0$ independent of $z$ such that
	\begin{equation}\label{est}
		\begin{array}{lll}
			\dis \|y\|_{L^2(Q)}\leq C_0,\\
			\dis \|y\|_{W(0,T)}\leq C.
		\end{array}
	\end{equation}
	
	Finally, from the first equation of \eqref{est}, we deduce that $F(B)\subset B$. In addition, from the last two equations of \eqref{est}, we deduce that $y\in W(0,T)$. Since this latter space is relatively compact in $L^2(Q)$, it follows that $F(B)$ is relatively compact in $B$. To conclude, it remains to show that $F$ is continuous from $B$ to $B$. We proceed as in \cite{chipot2000}.
	Let $(z_k)_k\in B$ be a sequence such that $z_k\to z$ in $L^2(Q)$. Set $y_k:=F(z_k)$, the solution to \eqref{generalin} corresponding to $z=z_k$. Then, from \eqref{est}, we can deduce that there exists  $y\in W(0,T)$ and a subsequence of $(y_k)_k$ always denoted by $(y_k)_k$ such that
	\begin{equation}\label{conv}
		\begin{array}{lll}
			\dis  y_k\rightharpoonup y   \text{ weakly in }  W(0,T).
		\end{array}
	\end{equation}
	Thanks the compact embedding $W(0,T)\hookrightarrow L^2(Q)$, we deduce from \eqref{conv} that
	\begin{equation}\label{conv1}
		y_k\to y   \text{ strongly in }  L^2(Q).
	\end{equation}
	Next, we claim that as $k\to \infty$,
	\begin{equation}\label{conv2}
		a(l(z_k(\cdot)))\to a(l(z(\cdot)))   \text{ strongly in }  L^2(Q).
	\end{equation}	
	
	Indeed, we have using the mean value theorem and \eqref{as1} that
	
	$$
	\begin{array}{rllll}
		\dis 	\|a(l(z_k(\cdot)))- a(l(z(\cdot)))\|^2_{L^2(Q)}&=& \dis \int_0^T\int_\Omega|a(l(z_k(t)))- a(l(z(t)))|^2\,dx\, \dt\\
		&\leq& M^2\dis \int_0^T\int_\Omega|l(z_k(t))- l(z(t))|^2\,dx\, \dt,
	\end{array}
	$$
	which in view of \eqref{defl} and the Cauchy-Schwarz inequality gives
	$$
	\begin{array}{rllll}
		\dis 	\|a(l(z_k(\cdot)))- a(l(z(\cdot)))\|^2_{L^2(Q)}&\leq& M^2\dis \int_0^T\int_\Omega\left[\left(\int_\Omega dx^\prime\right)^{1/2}\left(\int_\Omega|z_k(t,x^\prime)- z(t,x^\prime)|^2dx^\prime\right)^{1/2}\right]^2\,dx\, \dt\\
		&\leq& M^2 |\Omega|^2\dis \int_{Q}|z_k- z|^2\,\dq.
	\end{array}
	$$
	
	Taking the limit as $k\to \infty$ in this latter inequality leads us to \eqref{conv2}.
	
	Now, if we multiply the first equation in \eqref{generalin} by $\phi\in H(Q)$ and we integrate by parts over $Q$, we obtain
	\begin{equation*}
		\begin{array}{lll}
			\dis -\int_{0}^{T} \left\langle \phi_t,y_k\right\rangle \, \dt + \int_Q \nabla \phi\nabla y_k\, \dq+\int_Q a\left(l(z_k(t))\right) \phi y_k\, \dq+\int_Q\left(\tilde{a}_0 \dis \int_{\Omega}y_k(t,\sigma)\d\sigma\right)\phi\, \dq\\
			\qquad =
			\dis \int_\Omega y^0\,\phi(0,\cdot)\, \dx+\int_{\omega_T}f\phi\,\dq \;\;\;\text{for every}\;\;\; \phi \in H(Q).
		\end{array}
	\end{equation*}
	Passing to the limit in this latter identity, while using the convergences \eqref{conv}-\eqref{conv2} and the Lebesgue dominated convergence Theorem, we obtain
	\begin{equation}\label{2}
		\begin{array}{lll}
			\dis -\int_{0}^{T} \left\langle \phi_t,y\right\rangle \, \dt + \int_Q \nabla \phi\nabla y\, \dq+\int_Q a\left(l(z(t))\right) \phi y\, \dq+ \int_Q\left(\tilde{a}_0 \dis \int_{\Omega}y(t,\sigma)\d\sigma\right)\phi\, \dq\\
			\qquad =
			\dis \int_\Omega y^0\,\phi(0,\cdot)\, \dx+\int_{\omega_T}f\phi\,\dq \;\;\;\text{for every}\;\;\; \phi \in H(Q).
		\end{array}
	\end{equation}
	Therefore,  we can
	deduce that $y$ is a weak solution to \eqref{generalin}. Since this latter problem has a unique solution (see Corollary \ref{corollaryexistence}),  we conclude that  $y_k=F(z_k)$ converges strongly to $y=F(z)$ in $L^2(Q)$. Hence, the continuity of $F$ is established. By applying the Schauder fixed point theorem in $B$, we deduce that there exists a unique $y\in B$ such that $y=F(y)$. 
	Using again Corollary \ref{corollaryexistence}, with $z=y\in L^2(Q)$ we have that the weak solution $y=F(y)$ of \eqref{general} belongs to $W(0,T)$ and satisfies the estimates \eqref{estimationgen}-\eqref{estimationge}.
	
	\noindent \textbf{Step 2.} We prove the uniqueness of $y$ solution to \eqref{general}.\\
	Assume that there exist $y_1$ and $y_2$ solutions to \eqref{general} with the same right hand side $f$ and initial datum $y^0$.  Set $z:=e^{-rt}(y_1-y_2)$, with some $r>0$. Then  $z$ satisfies the following system
	\begin{equation}\label{aa1}
		\left\{
		\begin{array}{rllll}
			\dis z_{t}-\Delta z+a\left(l(y_1(t))\right)z+\tilde{a}_0\int_{\Omega}z(t,\sigma)\d\sigma +rz &=&-e^{-rt}\left[ a\left(l(y_1(t))\right)-a\left(l(y_2(t))\right)\right]y_2& \mbox{in}& Q,\\
			\dis  \partial_\nu z&=&0& \mbox{in}& \Sigma, \\
			\dis  z(0,\cdot)&=&0 &\mbox{in}&\Omega.
		\end{array}
		\right.
	\end{equation}
	We remark that for every $\phi\in W(0,T)$, we have
	\begin{equation}\label{ine1}
		\left|\int_Q\left(\tilde{a}_0 \dis \int_{\Omega}z(t,\sigma)\d\sigma\right)\phi\, \dq\right|\leq \tilde{C}|\Omega|\|z\|_{L^2(Q)}\|\phi\|_{L^2(Q)}.
	\end{equation}
	Multiplying the first equation in \eqref{aa1} by $z(t)$ for $t\in (0,T)$, integrating by parts over $\Omega$, using \eqref{as1} and \eqref{ine1}, we arrive to
	\begin{equation}\label{ajout01}
		\begin{array}{rllll}
			\dis \frac{1}{2}\frac{d}{dt}\|z(t)\|^2_{L^2(\Omega)}+\|\nabla z(t)\|^2_{L^2(\Omega)}+ (r+a_0-\tilde{C}|\Omega|)\|z(t)\|^2_{L^2(\Omega)} \\
			\leq\left| a\left(l(y_1(t))\right)-a\left(l(y_2(t))\right)\right|\|y_2(t)\|_{L^2(\Omega)}\|z(t)\|_{L^2(\Omega)}.
		\end{array}
	\end{equation}
	Using the mean value theorem and thanks to  \eqref{as1}, we have that
	\begin{equation}\label{ajout02}
		\left| a\left(l(y_1(t))\right)-a\left(l(y_2(t))\right)\right|\leq Me^{rT}|\Omega|^{\frac{1}{2}}\|z(t)\|_{L^2(\Omega)}.
	\end{equation}
	Now, combining \eqref{estimationgen}, \eqref{ajout01} and \eqref{ajout02} and choosing $r=\tilde{C}|\Omega|$, we deduce that
	\begin{equation}\label{ajout03}
		\begin{array}{rllll}
			\dis \frac{1}{2}\frac{d}{dt}\|z(t)\|^2_{L^2(\Omega)}\leq  Me^{\tilde{C}|\Omega|T}C(|\Omega|,a_0,\|y^0\|_{L^2(\Omega)}, \|f\|_{L^2(Q)})|\Omega|^{\frac{1}{2}}\|z(t)\|^2_{L^2(\Omega)}.
		\end{array}
	\end{equation}
	Using the Gronwall inequality and since $z(0,\cdot)=0$, we arrive to $z=0$ in $Q$. Hence $y_1=y_2$. This completes the proof.
\end{proof}
%
By taking $\tilde{a}_0=0$ in the Theorem \ref{theoremexistence}, we obtain the following existence result for the problem \eqref{model}.
\begin{theorem}\label{theoremexistence0}
	Let  $v\in L^2(\omega_T)$ and $y^{0}\in L^2(\Omega)$. Let $a(\cdot)$ satisfying the Assumption \ref{ass1}.  Then, there exists a unique weak solution $y \in W(0,T)$ to \eqref{model} in the sense of Definition~\ref{weakgen}.
	In addition, there exists a constant $C>0$ such that
	\begin{equation}\label{estimation0}
		\|y\|_{\C([0,T];L^2(\Omega))}+ \|y\|_{L^2(0,T;H^1(\Omega))} \leq C\left(\|y^0\|_{L^2(\Omega)}+\|v\|_{L^2(\omega_T)}\right)
	\end{equation}
	and
	\begin{equation}\label{estimation00}
		\|y\|_{W(0,T)}\leq C\left(\|y^0\|_{L^2(\Omega)}+\|v\|_{L^2(\omega_T)}\right).
	\end{equation}
\end{theorem}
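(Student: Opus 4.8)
The plan is to obtain Theorem~\ref{theoremexistence0} as a direct consequence of Theorem~\ref{theoremexistence} by specializing the coefficient $\tilde{a}_0$ to zero and the source term to $f=v\chi_\omega$. First I would observe that with $\tilde{a}_0\equiv 0$ the nonlocal zeroth-order term $\tilde{a}_0\int_\Omega y(t,\sigma)\,d\sigma$ disappears from \eqref{general}, so that system reduces exactly to \eqref{model}; moreover $\tilde{a}_0=0$ trivially satisfies $\|\tilde{a}_0\|_{L^\infty(Q)}\le\tilde C$ for any $\tilde C>0$, so the hypotheses of Theorem~\ref{theoremexistence} are met. Next I would note that $v\in L^2(\omega_T)$ extended by zero outside $\omega_T$ gives $f:=v\chi_\omega\in L^2(Q)$ with $\|f\|_{L^2(Q)}=\|v\|_{L^2(\omega_T)}$, and $y^0\in L^2(\Omega)$ by hypothesis; the weak formulation in Definition~\ref{weakgen}, with $\tilde{a}_0=0$ and the source integral $\int_Q f\phi\,\dq$ rewritten as $\int_{\omega_T}v\phi\,\dq$, is precisely the notion of weak solution one expects for \eqref{model}.

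Having set up this identification, existence and uniqueness of $y\in W(0,T)$ for \eqref{model} follow immediately from the corresponding assertion in Theorem~\ref{theoremexistence}. For the estimates, I would simply substitute $f=v\chi_\omega$ into \eqref{estimationgen} and \eqref{estimationge}: since $\tilde{a}_0=0$, the constant $C:=C(|\Omega|,a_0,\tilde C,T)$ produced there no longer depends on $\tilde C$ in any essential way (one may fix $\tilde C=1$, say), so it becomes a constant depending only on $|\Omega|$, $a_0$ and $T$, and \eqref{estimationgen}, \eqref{estimationge} turn into \eqref{estimation0}, \eqref{estimation00} after replacing $\|f\|_{L^2(Q)}$ by $\|v\|_{L^2(\omega_T)}$.

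There is essentially no obstacle here; the only points requiring a word of care are (i) verifying that the weak solution concept of Definition~\ref{weakgen} specializes correctly, in particular that the term $\int_{\omega_T} f\phi\,\dq$ appearing in \eqref{weakgene} and $\int_{\omega_T} v\phi\,\dq$ agree once $f$ is the zero-extension of $v$, and (ii) making explicit that the dependence of the constant on $\tilde C$ is vacuous when $\tilde C$ can be chosen as an absolute constant. Both are one-line remarks. I would therefore present the proof as: ``Apply Theorem~\ref{theoremexistence} with $\tilde{a}_0\equiv 0$ and $f=v\chi_\omega$; then \eqref{general} coincides with \eqref{model}, the hypotheses are satisfied, and the conclusions \eqref{estimation0}--\eqref{estimation00} follow from \eqref{estimationgen}--\eqref{estimationge} upon noting $\|v\chi_\omega\|_{L^2(Q)}=\|v\|_{L^2(\omega_T)}$.''
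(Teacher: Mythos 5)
Your proposal is correct and matches the paper exactly: the paper obtains Theorem~\ref{theoremexistence0} precisely by taking $\tilde{a}_0=0$ and $f=v\chi_{\omega}$ in Theorem~\ref{theoremexistence}, with no further argument. Your additional remarks about $\|v\chi_{\omega}\|_{L^2(Q)}=\|v\|_{L^2(\omega_T)}$ and the vacuous dependence of the constant on $\tilde{C}$ are fine and only make the specialization more explicit.
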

The next theorem shows that on additional assumptions, the weak solution to \eqref{general} are bounded.

\begin{theorem}\label{theorembound0}Let W(0,T) be defined as in \eqref{defWTA}.
	Let  $f\in L^\infty(Q)$, $\tilde{a}_0\in L^\infty(Q)$ such that $\|\tilde{a}_0\|_{L^\infty(Q)}\leq \tilde{C}$, for some $\tilde{C}>0$ and $y^{0}\in L^\infty(\Omega)$. Let $a(\cdot)$ satisfying the Assumption \ref{ass1}.  Then the unique weak solution $y \in W(0,T)$ to \eqref{general} belongs to $L^\infty(Q)$.
	In addition, there exists a constant $C=C(n,\tilde{C}, |\Omega|, T)>0$ such that
	\begin{equation}\label{estimationbound0}
		\|y\|_{L^\infty(Q)} \leq C\left(\|f\|_{L^\infty(\omega_T)}+\|y^0\|_{L^\infty(\Omega)}\right).
	\end{equation}
\end{theorem}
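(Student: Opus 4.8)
The plan is to establish the $L^\infty$ bound via the classical De Giorgi/Stampacchia truncation technique, exploiting Theorem~\ref{thmimp} to upgrade the energy estimates to boundedness. First I would normalize the problem: since Theorem~\ref{theoremexistence} already gives $y\in W(0,T)$, it suffices to control its supremum. Set $k_0:=\|y^0\|_{L^\infty(\Omega)}$ and, for $k\geq k_0$, work with the truncated function $y_k:=(y-k)^+$, which belongs to $L^2(0,T;H^1(\Omega))$ with $\nabla y_k=\nabla y\,\chi_{\{y>k\}}$ and vanishes at $t=0$ because $k\geq\|y^0\|_{L^\infty(\Omega)}$. Since $y_k$ is (after a standard Steklov-averaging justification) an admissible test function in the weak formulation \eqref{weakgene}, testing with $y_k$ and integrating in time over $(0,t)$ gives, for a.e. $t\in(0,T)$,
\begin{equation*}
\frac12\|y_k(t)\|^2_{L^2(\Omega)}+\int_0^t\!\!\int_\Omega|\nabla y_k|^2\,\dq
=\int_0^t\!\!\int_\Omega\Big(f-a(l(y(s)))y-\tilde a_0\!\int_\Omega y(s,\sigma)\,\d\sigma\Big)y_k\,\dq.
\end{equation*}
Using Assumption~\ref{ass1} ($a_0\leq a\leq a_1$), the bound $\|\tilde a_0\|_{L^\infty(Q)}\leq\tilde C$, Young's inequality, and absorbing the good term $-a_0\int y\,y_k\le -a_0\int y_k^2$ (plus the elementary inequality $|\int_\Omega y(s,\sigma)\,\d\sigma|\le|\Omega|^{1/2}\|y(s)\|_{L^2(\Omega)}$ controlled by the already-known energy estimate \eqref{estimationge}), I would arrive at an estimate of the shape
\begin{equation*}
\sup_{0\le t\le T}\|y_k(t)\|^2_{L^2(\Omega)}+\int_{Q}|\nabla y_k|^2\,\dq
\le C\Big(\|f\|_{L^\infty(Q)}+\|y^0\|_{L^\infty(\Omega)}+1\Big)^2\,|A_k|,
\end{equation*}
where $A_k:=\{(x,t)\in Q:\ y(x,t)>k\}$ and $|A_k|$ is its Lebesgue measure; the factor $|A_k|$ arises because every right-hand term carries a factor $y_k$ or $\chi_{A_k}$, and on $A_k$ one has $|f|,|y^0|\le$ the stated norms while the zeroth-order contributions are integrated against $\chi_{A_k}$.

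Next I would convert this into the iteration hypothesis of Theorem~\ref{thmimp}. By Lemma~\ref{lemmaimpt} applied to $y_k\in W(0,T)$ (with the same exponent $\sigma=\tfrac{2(n+2)}{n}$), together with the energy bound just obtained,
\begin{equation*}
\int_Q|y_k|^\sigma\,\dq\le C\|y_k\|^{4/n}_{L^\infty(0,T;L^2(\Omega))}\int_Q|\nabla y_k|^2\,\dq
\le C\,K^{\,2+4/n}\,|A_k|^{1+2/n},
\end{equation*}
writing $K:=\|f\|_{L^\infty(Q)}+\|y^0\|_{L^\infty(\Omega)}+1$. On the other hand, for $m>k$ one has $A_m\subset A_k$ and on $A_m$ the inequality $y-k>m-k$ holds, so $(m-k)^\sigma|A_m|\le\int_{A_m}|y_k|^\sigma\le\int_Q|y_k|^\sigma$. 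Combining the two displays yields
\begin{equation*}
|A_m|\le\frac{C\,K^{\,2+4/n}}{(m-k)^\sigma}\,|A_k|^{1+2/n}\qquad\text{for all }m>k\ge k_0,
\end{equation*}
which is exactly \eqref{ineq2} for the non-increasing function $\varphi(k):=|A_k|$ with parameters $\alpha=\sigma=\tfrac{2(n+2)}{n}>0$ and $\beta=1+\tfrac2n>1$. Theorem~\ref{thmimp} then gives $\varphi(m)=|A_m|=0$ for all $m\ge k_0+d$ with $d=C\,2^{\beta/(\beta-1)}\,K^{(2+4/n)/\sigma}\cdot|A_{k_0}|^{(\beta-1)/\alpha}\le C\,K\,|Q|^{\,(\beta-1)/\alpha}$, i.e. $y\le k_0+d\le C K$ a.e. The same argument applied to $-y$ (which solves \eqref{general} with data $-f$, $-y^0$ and the still-admissible lower bound $a_0\le a\le a_1$) bounds $y$ from below, and together these give \eqref{estimationbound0} with a constant $C=C(n,\tilde C,|\Omega|,T)$.

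The main obstacle I anticipate is the energy estimate with the sharp dependence on $|A_k|$: one must be careful that the nonlocal terms $a(l(y))y$ and $\tilde a_0\int_\Omega y\,\d\sigma$, which are not pointwise bounded a priori, are nonetheless handled so that each contributes a factor $|A_k|$ (or $|A_k|^{1/2}$) and not merely a constant — this is where the already-established $W(0,T)$ estimate \eqref{estimationge} and Young's inequality with a small parameter to absorb $\int_{A_k}y_k^2$ into the coercive term are essential. A secondary technical point is the rigorous justification that $y_k$ is an admissible test function despite $\phi_t$ appearing in \eqref{weakgene}: this is the standard Steklov-average argument, and I would simply invoke it. Everything else — the application of Lemma~\ref{lemmaimpt}, the measure-shrinking comparison $A_m\subset A_k$, and the invocation of Theorem~\ref{thmimp} — is routine once the energy inequality is in the stated form.
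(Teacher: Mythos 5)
Your proposal is correct and follows the same De Giorgi--Stampacchia skeleton as the paper (test with the truncation, derive the energy inequality, apply Lemma \ref{lemmaimpt}, deduce the measure inequality, invoke Theorem \ref{thmimp}, repeat for the negative part), but it diverges in how the nonlocal terms are neutralized. The paper first performs the exponential change of variables $z=e^{-rt}y$ with $r=\tilde{C}|\Omega|$, so that the added zeroth-order term $rz$ absorbs the nonlocal contribution $\tilde{a}_0\int_\Omega z\,\d\sigma$ via the estimate \eqref{ine1}, and the resulting form is coercive with $|f|$ alone on the right-hand side; you instead keep $y$ itself, bound $\bigl|\int_\Omega y(s,\sigma)\,\d\sigma\bigr|\le|\Omega|^{1/2}\|y\|_{L^\infty(0,T;L^2(\Omega))}$ by the already-established energy estimate \eqref{estimationgen}--\eqref{estimationge}, and treat the nonlocal term as a bounded forcing of size $CK$ with $K=\|f\|_{L^\infty(Q)}+\|y^0\|_{L^\infty(\Omega)}+1$. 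Both routes are legitimate: yours avoids the change of variables and yields slightly cleaner iteration exponents ($\alpha=\sigma$, $\beta=1+2/n$), at the price of importing the a priori $W(0,T)$ bound, which is indeed available at that stage; the paper's weight trick keeps the right-hand side equal to $f$ only, with the $(n,\tilde{C},|\Omega|,T)$-dependence entering through the factor $e^{rT}$ at the end. Two points of care in your version: the intermediate bound $\sup_t\|y_k(t)\|^2_{L^2(\Omega)}+\|\nabla y_k\|^2_{L^2(Q)}\le CK^2|A_k|$ does not follow directly from ``every term carries a factor $y_k$'' --- the right-hand side is naturally $CK\int_{A_k}|y_k|$, and you must use exactly the absorption you mention (the sign term $-a(l(y))y\,y_k\le -a_0y_k^2$ on $A_k$, valid because $k\ge\|y^0\|_{L^\infty(\Omega)}\ge0$, combined with Young's inequality) to reach the stated form; and for the lower bound, $-y$ does not solve \eqref{general} (since $a(l(-y))\ne a(l(y))$ in general), so the argument should be phrased, as the paper does with $(z+k)^-$, as applying the same truncation to the equation with the \emph{frozen} coefficient $a(l(y(t)))\in[a_0,a_1]$, which is all your estimates actually use.
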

\begin{proof}
	We set $z=e^{-rt}y$ where $y$ is solution to \eqref{general} and for $r=\tilde{C}|\Omega|$. Then $z\in W(0,T)$ is the unique weak solution to
	\begin{equation}\label{general0}
		\left\{
		\begin{array}{rllll}
			\dis z_t-\Delta z+a(l(e^{rt}z(t)))z+\tilde{a}_0\int_{\Omega}z(t,\sigma)\d \sigma+rz &=& e^{-rt}f &\mbox{in}& Q,\\
			\dis \partial_{\nu}z&=&0  &\mbox{in}& \Sigma,\\
			z(0,\cdot)&=& y^0 &\mbox{in}& \Omega.
		\end{array}
		\right.
	\end{equation}
	Let $k$ be a real number such that $k>\|y^0\|_{L^\infty(\Omega)}$ and $t\in [0,T]$.  Then  for almost every $t\in (0,T)$, $z(t)-k\in H^1(\Omega)$ and so $(z-k)^+(t):=\max\{z(t)-k,0\}$ belongs to $H^1(\Omega)$.  For almost every $t\in (0,T)$, we set
	\begin{equation}\label{defA}
		\begin{array}{lll}
			\dis A(z(t), \phi(t))&=&\dis \int_{\Omega}\nabla z(t)\nabla \phi(t)\,\dx+ \int_{\Omega}a(l(e^{rt}z(t)))z(t)\phi(t)\,\dx\\
			&&\dis +\int_{\Omega}\left(\tilde{a}_0 \dis \int_{\Omega}z(t,\sigma)\d\sigma\right)\phi(t)\, \dx+ r\int_{\Omega}z(t)\phi(t)\,\dx.
		\end{array}
	\end{equation}
	Then, since $r\mapsto a(r)$ is non-decreasing and considering the chosen value of $r$, we can use \eqref{as1} and perform straightforward computations to obtain
	\begin{equation}\label{s1}
		\begin{array}{lll}
			\dis A((z-k)(t), (z-k)^+(t))\leq\dis A((z(t), (z-k)^+(t)).
		\end{array}
	\end{equation}
	If we multiplying the first equation in \eqref{general0} by $(z-k)^+(t)$ and integrate by parts over $\Omega$, we arrive to
	\begin{equation}\label{s2}
		\begin{array}{lll}
			\left\langle (z-k)_t(t),(z-k)^+(t)\right\rangle+\dis A((z(t), (z-k)^+(t))=\int_{\Omega}e^{-rt}f(t)(z-k)^+(t)\,\dx.
		\end{array}
	\end{equation}
	Combining \eqref{s1} and \eqref{s2}, it follows that
	$$
	\begin{array}{lll}
		\left\langle (z-k)_t(t),(z-k)^+(t)\right\rangle+\dis A((z-k)(t), (z-k)^+(t))\leq \int_{\Omega}e^{-rt}f(t)(z-k)^+(t)\,\dx,
	\end{array}
	$$
	which in view of the fact that
	$$
	\dis \left\langle (z-k)_t(t),(z-k)^+(t)\right\rangle=\frac{1}{2}\frac{d}{dt}\|(z-k)^+(t)\|^2_{L^2(\Omega)}
	$$
	gives
	\begin{equation}\label{s3}
		\begin{array}{lll}
			\dis 	\frac{1}{2}\frac{d}{dt}\|(z-k)^+(t)\|^2_{L^2(\Omega)}+\dis A((z-k)(t), (z-k)^+(t))\leq \int_{\Omega}e^{-rt}f(t)(z-k)^+(t)\,\dx.
		\end{array}
	\end{equation}
	Using \eqref{as1}, \eqref{ine1} and since $r=\tilde{C}|\Omega|$, we have that
	$$
	\begin{array}{lll}
		\dis A((z-k)(t), (z-k)^+(t))
		\geq\dis  \|\nabla(z-k)^+(t)\|^2_{L^2(\Omega)},
	\end{array}
	$$
	which combined with \eqref{s3} yields
	\begin{equation}\label{inter1}
		\frac{1}{2}\frac{d}{dt}\|(z-k)^+(t,\cdot)\|^2_{L^2(\Omega)}+\|\nabla(z-k)_+(t,\cdot)\|^2_{L^2(\Omega)}\leq \int_{\Omega}|f(t,\cdot)||(z-k)^+(t,\cdot)|\,\dx.
	\end{equation}
	Observing that $(z-k)^+(0):=max(z(0)-k,0)=0$ and  integrating  \eqref{inter1} on $(0,t)$, with $t\in [0,T]$, we obtain that
	\begin{equation}
		\frac{1}{2}\|(z-k)^+(t)\|^2_{L^2(\Omega)}+\int_{0}^{t}\|\nabla(z-k)^+(s)\|^2_{L^2(\Omega)}\,\ds\leq \int_{Q}|f||(z-k)^+|\,\dq.
	\end{equation}
	Hence,
	\begin{subequations}
		\begin{alignat}{11}
			\dis \|(z-k)^+\|^2_{L^\infty (0,T;L^2(\Omega))}\leq  2 \int_{Q}|f||(z-k)^+|\,\dq,\label{inter200}\\
			\|\nabla(z-k)^+\|^2_{L^2(Q)}\leq 2 \int_{Q}|f||(z-k)^+|\,\dq.\label{inter3}
		\end{alignat}
	\end{subequations}
	
	Now, using Lemma \ref{lemmaimpt}, we obtain that
	$$
	\begin{array}{lll}
		\dis\left(\int_{Q} |(z-k)^+|^{\frac{2(n+2)}{n}}\,\dq\right)^{\frac{n}{n+2}}
		\leq \left[ \|(z-k)^+\|^{\frac{4}{n}}_{L^\infty(0,T;L^2(\Omega))}\|\nabla(z-k)^+\|^{2}_{L^2(Q)}\right]^{\frac{n}{n+2}},
	\end{array}
	$$
	which in view of the fact that
	$$\dis\left[ \|(z-k)^+\|^{\frac{4}{n}}_{L^\infty(0,T;L^2(\Omega))}\|\nabla(z-k)^+\|^{2}_{L^2(Q)}\right]^{\frac{n}{n+2}}=
	\|(z-k)^+\|^{\frac{4}{n+2}}_{L^\infty(0,T;L^2(\Omega))}\|\nabla(z-k)^+\|^{\frac{2n}{n+2}}_{L^2(Q)}
	$$
	implies that
	\begin{equation}\label{ineq4}
		\begin{array}{lll}
			\dis\left(\int_{Q} |(z-k)^+|^{\frac{2(n+2)}{n}}\,\dq\right)^{\frac{n}{n+2}}
			\leq \|(z-k)^+\|^{\frac{4}{n+2}}_{L^\infty(0,T;L^2(\Omega))}\|\nabla(z-k)^+\|^{\frac{2n}{n+2}}_{L^2(Q)}.
		\end{array}
	\end{equation}
	On the other hand, using the Young's inequality with $p=\frac{n+2}{2}$ and $q=\frac{n+2}{n}$, we arrive to
	$$
	\begin{array}{lll}
		\dis \|(z-k)^+\|^{\frac{4}{n+2}}_{L^\infty(0,T;L^2(\Omega))}\|\nabla(z-k)^+\|^{\frac{2n}{n+2}}_{L^2(Q)}\leq\|(z-k)^+\|^2_{L^\infty(0,T;L^2(\Omega))}+\|\nabla(z-k)^+\|^2_{L^2(Q)},
	\end{array}
	$$
	which in view of \eqref{inter200} and \eqref{inter3} gives
	\begin{equation}\label{ineq3}
		\begin{array}{lll}
			\dis \|(z-k)^+\|^{\frac{4}{n+2}}_{L^\infty(0,T;L^2(\Omega))}\|\nabla(z-k)^+\|^{\frac{2n}{n+2}}_{L^2(Q)}
			\leq 4 \dis \int_{Q}|f||(z-k)^+|\,\dq.
		\end{array}
	\end{equation}
	Combining \eqref{ineq4}-\eqref{ineq3}, we arrive to
	\begin{equation}\label{ineq6}
		\int_{Q} |(z-k)^+|^{\frac{2(n+2)}{n}}\,\dq\leq 4^{\frac{n+2}{n}} \left(\int_{\omega_T}|f||(z-k)^+|\,\dq\right)^{\frac{n+2}{n}}.
	\end{equation}
	Next, we set $A_k:=\{(x,t)\in Q:z(x,t)>k\}$. Then $A_k$ is a measurable set. Moreover, we deduce from \eqref{ineq6} that
	\begin{equation}\label{ineq7}
		\int_{A_k} |(z-k)^+|^{\frac{2(n+2)}{n}}\,\dq\leq  4^{\frac{n+2}{n}} \left(\int_{A_k}|f||(z-k)^+|\,\dq\right)^{\frac{n+2}{n}}.
	\end{equation}
	Using the H\"{o}lder's inequality with $p=\frac{2(n+2)}{n+4}$ and $q=\frac{2(n+2)}{n}$ in the right hand side of \eqref{ineq7}, we obtain
	\begin{equation}\label{ineq8}
		\begin{array}{lll}
			\dis\left(\int_{A_k} |(z-k)^+|^{\frac{2(n+2)}{n}}\,\dq\right)^{1/2}
			\leq  4^{\frac{n+2}{n}}\|f\|^{\frac{2(n+2)}{n}}_{L^\infty(Q)}|A_k|^{\frac{n+4}{n}},
		\end{array}
	\end{equation}
	where $|A_k|$ denotes the measure of the set $A_k$.\\
	Let $m>k$, then $A_m\subseteq A_k$. Consequently,  for a.e. $(x,t)\in A_m$, we have  $z(x,t)-k>m-k$ and then,  $z-k=(z-k)^+$. Thus,
	\begin{equation}\label{ineq9}
		\begin{array}{lll}
			\dis\int_{A_k} |(z-k)^+|^{\frac{2(n+2)}{n}}\,\dq\geq (m-k)^{\frac{2(n+2)}{n}}|A_m|.
		\end{array}
	\end{equation}
	Combining \eqref{ineq8} and \eqref{ineq9}, we arrive to
	\begin{equation}\label{ineq10}
		\begin{array}{lll}
			\dis |A_m|\leq \left[\frac{4\|v\|^2_{L^\infty(\omega_T)}}{m-k}\right]^{\frac{2(n+2)}{n}}|A_k|^{\frac{2(n+4)}{n}}.
		\end{array}
	\end{equation}
	Using  Theorem \ref{thmimp} with $k_0=C(n)\left(\|f\|_{L^\infty(Q)}+\|y^0\|_{L^\infty(\Omega)}\right)$, we have $|A_{k_0}|=0$ and then we obtain that $|A_m|=0$ for any $m>C(n)\left(\|f\|_{L^\infty(Q)}+\|y^0\|_{L^\infty(\Omega)}\right)$. Therefore, the set $A_m$, where $z>C(n)\left(\|f\|_{L^\infty(Q)}+\|y^0\|_{L^\infty(\Omega)}\right)$ has measure zero. In the same way, by setting $(z+k)^-:=min(z+k,0)$ and  $A_k:=\{(x,t)\in Q:z(x,t)<k\}$ one show that the set $A_m$, where $z<-C(n)\left(\|f\|_{L^\infty(Q)}+\|y^0\|_{L^\infty(\Omega)}\right)$ has measure zero. Hence,
	\begin{equation*}
		\|z\|_{L^\infty(Q)}\leq C(n)\left(\|f\|_{L^\infty(Q)}+\|y^0\|_{L^\infty(\Omega)}\right).
	\end{equation*}
	Now, since $z=e^{-\tilde{C}|\Omega|t}y$, we arrive to
	\begin{equation*}
		\|y\|_{L^\infty(Q)}\leq C(n,\tilde{C}, |\Omega|, T)\left(\|f\|_{L^\infty(Q)}+\|y^0\|_{L^\infty(\Omega)}\right).
	\end{equation*}
	This completes the proof.	
\end{proof}
By setting $\tilde{a}_0=0$ in the Theorem \ref{theorembound0}, we deduce the following result.
\begin{theorem}\label{theorembound}Let W(0,T) be defined as in \eqref{defWTA}, $v\in L^\infty(\omega_T)$ and $y^{0}\in L^\infty(\Omega)$. Let $a(\cdot)$ satisfying the Assumption \ref{ass1}.  Then the unique weak solution $y \in W(0,T)$ to \eqref{model} belongs to $L^\infty(Q)$.
	In addition, there exists a constant $C=C(n)>0$ such that
	\begin{equation}\label{estimationbound}
		\|y\|_{L^\infty(Q)} \leq C\left(\|v\|_{L^\infty(\omega_T)}+\|y^0\|_{L^\infty(\Omega)}\right).
	\end{equation}
\end{theorem}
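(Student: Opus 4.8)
The plan is to obtain Theorem~\ref{theorembound} as an immediate specialization of Theorem~\ref{theorembound0}, exactly in the spirit in which Theorem~\ref{theoremexistence0} was deduced from Theorem~\ref{theoremexistence}. The key observation is that the state equation \eqref{model} is precisely the general problem \eqref{general} with the choices $\tilde{a}_0 \equiv 0$ and right-hand side $f = v\chi_\omega$: with $\tilde{a}_0 = 0$ the extra nonlocal term in \eqref{general} and in the weak formulation \eqref{weakgene} vanishes identically, so the notion of weak solution of Definition~\ref{weakgen} for \eqref{general} reduces to that for \eqref{model}. Moreover, since $v \in L^\infty(\omega_T)$ we have $f = v\chi_\omega \in L^\infty(Q)$ with $\|f\|_{L^\infty(Q)} = \|v\|_{L^\infty(\omega_T)}$, and the hypothesis $\|\tilde{a}_0\|_{L^\infty(Q)} \le \tilde{C}$ is satisfied trivially for any $\tilde{C}>0$.

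Concretely, I would first invoke Theorem~\ref{theoremexistence0} to guarantee that \eqref{model} admits a unique weak solution $y \in W(0,T)$, so that there is a well-defined object whose boundedness is to be shown. Then I would apply Theorem~\ref{theorembound0} with $\tilde{a}_0 = 0$, $f = v\chi_\omega \in L^\infty(Q)$ and $y^0 \in L^\infty(\Omega)$; this yields at once that $y \in L^\infty(Q)$ together with the quantitative estimate \eqref{estimationbound0}.

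The one point deserving a word of care is the form of the constant: \eqref{estimationbound0} carries a constant $C = C(n,\tilde{C},|\Omega|,T)$, whereas \eqref{estimationbound} asserts $C = C(n)$. Tracing through the proof of Theorem~\ref{theorembound0}, the dependence on $\tilde{C}$, $|\Omega|$ and $T$ enters only through the change of unknown $z = e^{-rt}y$ with $r = \tilde{C}|\Omega|$: when $\tilde{a}_0 = 0$ one may take $r = 0$, so that $z = y$ and no exponential factor is introduced at the last step. The De Giorgi-type iteration of Theorem~\ref{thmimp}, applied with $k_0 = C(n)\left(\|f\|_{L^\infty(Q)} + \|y^0\|_{L^\infty(\Omega)}\right)$ as in that proof, then delivers directly $\|y\|_{L^\infty(Q)} \le C(n)\left(\|f\|_{L^\infty(Q)} + \|y^0\|_{L^\infty(\Omega)}\right)$, and substituting $\|f\|_{L^\infty(Q)} = \|v\|_{L^\infty(\omega_T)}$ gives \eqref{estimationbound}. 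There is essentially no genuine obstacle here: the substantive analysis — the truncation argument, the Gagliardo--Nirenberg estimate of Lemma~\ref{lemmaimpt}, and the iteration lemma — has already been carried out in the proof of Theorem~\ref{theorembound0}, and the present statement is a corollary whose only subtlety is observing that the constant degenerates to one depending on the dimension $n$ alone.
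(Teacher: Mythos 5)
Your proposal is correct and follows exactly the paper's route: the paper obtains Theorem~\ref{theorembound} simply by setting $\tilde{a}_0=0$ (with $f=v\chi_\omega$) in Theorem~\ref{theorembound0}. Your extra remark tracing why the constant degenerates from $C(n,\tilde{C},|\Omega|,T)$ to $C(n)$ when $r=\tilde{C}|\Omega|$ can be taken to be $0$ is a detail the paper leaves implicit, and it is handled correctly.
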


\section{Resolution of the optimization problem \label{control}}
We start this section by the following definition:
\begin{definition}\label{ctso}
We define the control-to-state mapping
$$
G:L^\infty(\omega_T)\to W(0,T)\cap L^\infty(Q),
$$
which associates to each $u\in L^\infty(\omega_T)$ the unique weak solution $y\in W(0,T)\cap L^\infty(Q)$ of \eqref{model}. Sometimes, we may write $G(u)$ to denote the state $y$ corresponding to the control $u$.
\end{definition}
Keeping this definition in mind, the optimal control problem \eqref{opt} can be rewritten as :
\begin{equation}\label{opt2}
\inf_{v\in \U_{ad}} J(G(v),v).
\end{equation}

We introduce the following notion of local  solutions.
\begin{definition}\label{defopt}
Let For $1 \leq p \leq +\infty$. We say that $u\in \U_{ad}$ is an $L^p$-local solution of \eqref{opt} if there exists $\varepsilon>0$ such that $J(G(u),u)\leq J(G(v),v)$ for every $v\in \U_{ad}\cap B^p_\varepsilon(u)$ where $ B^p_\varepsilon(u)=\{u\in L^p(\Omega):\|v-u\|_{L^p(\Omega)}\leq \varepsilon\}$. We say that $u$ is a strict local minimum of \eqref{opt} if the above inequality is strict whenever $v\neq u$.
\end{definition}

\subsection{Existence of optimal solutions}
In this subsection, we prove the existence of optimal solutions to \eqref{opt2}-\eqref{model}.	
\begin{theorem}\label{existcontrol1}
Let  $\mu>0$, $v\in \mathcal{U}_{ad}$ and $y^{0},y^d\in L^2(\Omega)$. Then  there exists at least a solution  $u\in \mathcal{U}_{ad}$ of the optimal control problem \eqref{opt2}-\eqref{model}.
\end{theorem}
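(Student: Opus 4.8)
The plan is to use the direct method of the calculus of variations. First I would note that the cost functional $J$ is bounded below by zero, so the infimum $j:=\inf_{v\in\U_{ad}}J(G(v),v)$ is a finite nonnegative real number; hence there exists a minimizing sequence $(v_k)_k\subset\U_{ad}$ with $J(G(v_k),v_k)\to j$. Denote by $y_k:=G(v_k)$ the corresponding states. Because $\U_{ad}$ is defined by the pointwise bounds $\alpha\leq w\leq\beta$, the sequence $(v_k)_k$ is bounded in $L^\infty(\omega_T)$, hence bounded in $L^2(\omega_T)$; after extracting a subsequence (not relabeled) we may assume $v_k\rightharpoonup u$ weakly in $L^2(\omega_T)$. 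Since $\U_{ad}$ is convex and closed in $L^2(\omega_T)$, it is weakly closed, so $u\in\U_{ad}$.

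Next I would pass to the limit in the state equation. By Theorem~\ref{theoremexistence0}, the estimate \eqref{estimation00} gives $\|y_k\|_{W(0,T)}\leq C(\|y^0\|_{L^2(\Omega)}+\|v_k\|_{L^2(\omega_T)})\leq C'$ uniformly in $k$. Thus, up to a further subsequence, $y_k\rightharpoonup y$ weakly in $W(0,T)$, and by the compact embedding \eqref{comp}, $y_k\to y$ strongly in $L^2(Q)$ (and a.e. in $Q$ along a subsequence). As in the proof of Theorem~\ref{theoremexistence}, strong convergence in $L^2(Q)$ combined with the Lipschitz-type estimate coming from the mean value theorem and \eqref{as1} yields $a(l(y_k(\cdot)))\to a(l(y(\cdot)))$ strongly in $L^2(Q)$, and $y_k\to y$ strongly in $L^2(Q)$, so the product $a(l(y_k(t)))y_k\to a(l(y(t)))y$ in $L^1(Q)$ — enough to pass to the limit in the weak formulation \eqref{weakgene} (with $\tilde a_0=0$). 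The gradient term passes to the limit by weak convergence in $L^2(Q)$, the time-derivative term by weak convergence in $W(0,T)$, and the right-hand side $\int_{\omega_T}v_k\phi\,\dq\to\int_{\omega_T}u\phi\,\dq$ by weak $L^2$ convergence of $v_k$. Hence $y$ is the weak solution of \eqref{model} associated to $u$, i.e. $y=G(u)$.

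Finally I would invoke lower semicontinuity of $J$. The first term $\tfrac12\int_Q|y_k-y_d|^2\,\dq\to\tfrac12\int_Q|y-y_d|^2\,\dq$ by the strong $L^2(Q)$ convergence of $y_k$; the second term $\tfrac{\mu}{2}\int_{\omega_T}|v|^2\,\dq$ is convex and continuous in $v$, hence weakly lower semicontinuous, so $\tfrac{\mu}{2}\int_{\omega_T}|u|^2\,\dq\leq\liminf_k\tfrac{\mu}{2}\int_{\omega_T}|v_k|^2\,\dq$. Combining,
\[
J(G(u),u)=J(y,u)\leq\liminf_{k\to\infty}J(y_k,v_k)=j,
\]
and since $u\in\U_{ad}$ we also have $J(G(u),u)\geq j$; therefore $J(G(u),u)=j$ and $u$ is an optimal control. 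I expect the only delicate point to be the passage to the limit in the nonlocal nonlinear term $a(l(y_k(t)))y_k$, but this is handled exactly as in the proof of Theorem~\ref{theoremexistence}, using the bound \eqref{as1} on $a$ and the compact embedding $W(0,T)\hookrightarrow L^2(Q)$; everything else is standard.
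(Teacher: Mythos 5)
Your proof is correct and follows essentially the same route as the paper: the direct method with a minimizing sequence, the uniform $W(0,T)$ bound from \eqref{estimation00}, weak compactness plus the compact embedding $W(0,T)\hookrightarrow L^2(Q)$, the convergence $a(l(y_k))\to a(l(y))$ in $L^2(Q)$ to identify $y=G(u)$, and weak lower semicontinuity of $J$. The only cosmetic difference is that you extract $v_k\rightharpoonup u$ weakly in $L^2(\omega_T)$ while the paper uses weak-$\star$ convergence in $L^\infty(\omega_T)$; both suffice since $\mathcal{U}_{ad}$ is convex, bounded and closed.
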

\begin{proof}
Notice that $J(G(v),v)\geq 0$ for all $v\in \mathcal{U}_{ad}$. Let $\{(G(v_k),v_k)\}_k\subset W(0,T)\times\mathcal{U}_{ad}$ be a minimizing sequence such that
$$\lim_{k\to \infty}J(G(v_k),v_k)\to \min_{v\in\U}J(G(v),v). $$	
Then, there exists a constant $C_0>0$ such that
\begin{equation*}
	\|v_k\|_{L^2(\omega_T)}\leq C_0.
\end{equation*}
Since $y_k:=G(v_k)$ is the solution of \eqref{model} associated to  the control $v_k$, it follows from the latter inequality and \eqref{estimation00} that there is a positive constant $C$ independent of $n$ such that
\begin{equation}\label{b2}	
	\|y_k\|_{W(0,T)} \leq C.
\end{equation}
From the boundedness of $\U_{ad}$ in $L^\infty(\omega_T)$ and \eqref{b2}, we deduce the existence of  $u\in L^\infty(\omega_T)$ and $y\in W(0,T) $ such that up to a subsequence and as $k\to \infty$, we have
\begin{equation}\label{c1}
	v_k\rightharpoonup u   \text{ weakly-$\star$ } L^\infty(\omega_T)
\end{equation}
and
\begin{equation}\label{c2}
	y_k\rightharpoonup y   \text{ weakly in }  W(0,T).
\end{equation}
Using the compact embedding $W(0,T)\hookrightarrow L^2(Q)$, we deduce from \eqref{c2} that
\begin{equation}\label{c3}
	y_k\to y   \text{ strongly in }  L^2(Q).
\end{equation}

Next, we show as in \eqref{conv2} that as $k\to \infty$,
\begin{equation}\label{c4}
	a(l(y_k(\cdot)))\to a(l(y(\cdot)))   \text{ strongly in }  L^2(Q).
\end{equation}	

Finally, it remains to prove that $y=y(u)$, i.e. $y$ is the state associated to the control $u$. Recalling that $y_k:=G(v_k)$ is the solution of \eqref{model} associated to  the control $v_k$, we have
\begin{equation}\label{weaksoln}
	\begin{array}{lll}
		\dis -\int_{0}^{T} \left\langle \phi_t,y_k\right\rangle \, \dt + \int_Q \nabla \phi\nabla y_k\, \dq+\int_Q a\left(l(y_k(t))\right) \phi y_k\, \dq\\
		\qquad =
		\dis \int_\Omega y^0\,\phi(0,\cdot)\, \dx+\int_{\omega_T}v_k\phi\,\dq \;\;\;\text{for every}\;\;\; \phi \in W(0,T), \; \phi(T,\cdot)=0\;\; \text{in}\;\; \Omega.
	\end{array}
\end{equation}
Passing to the limit in this latter identity, while using the convergences \eqref{c1}-\eqref{c4}, we obtain
\begin{equation}\label{weaksol0}
	\begin{array}{lll}
		\dis -\int_{0}^{T} \left\langle \phi_t,y\right\rangle \, \dt + \int_Q \nabla \phi\nabla y\, \dq+\int_Q a\left(l(y(t))\right) \phi y\, \dq\\
		\qquad =
		\dis \int_\Omega y^0\,\phi(0,\cdot)\, \dx+\int_{\omega_T}u\phi\,\dq \;\;\;\text{for every}\;\;\; \phi \in W(0,T),\; \phi(T,\cdot)=0\;\; \text{in}\;\; \Omega.
	\end{array}
\end{equation}
Hence, $y=y(u)$. In addition, since $\U_{ad}$ is a closed convex subset of $L^\infty(\omega_T)$, we have that $\U_{ad}$ is weakly closed and \eqref{c1} implies
\begin{equation}\label{ajout5}
	u\in \U_{ad}.
\end{equation}
Moreover, using \eqref{c3}, \eqref{ajout5}, and the lower semi-continuity of the cost functional $J$, it follows that
\begin{eqnarray*}
	J(G(u),u)&\leq &\liminf_{k\to \infty}J(G(v_k),v_k)=\inf_{v\in\U}J(G(v),v).
\end{eqnarray*}	
This completes the proof.		
\end{proof}

\section{Optimality conditions}\label{sec-5}
In this section, we establish the first and second order optimality conditions for the problem \eqref{opt}-\eqref{model}. 	
Before going further, we need some regularity results for the control-to-state operator.
In the rest of the paper, we assume that $y^0\in L^\infty(\Omega)$, so that the weak solution $y$ of \eqref{model} belongs to $\mathbb{X}:=W(0,T)\cap L^\infty(Q)$ (see Theorem \ref{theorembound}).
\subsection{Regularity results for the control-to-state operator}
Here, we derive some regularity results for the control-to-state operator. Let us introduce the vector space
\begin{equation}\label{space1}
\mathbb{Y}=\left\{z\in \mathbb{ X}: \dis z_{t}-\Delta z\in L^\infty(Q) \right\}.
\end{equation}
Then,  $\mathbb{ Y}$ endowed with the graph norm
\begin{equation}\label{norm1}
\|z\|_{\mathbb{ Y}}=\|z\|_{\mathbb{ X}}+\|z_{t}-\Delta z\|_{L^{\infty}(Q)},\quad\forall z\in \mathbb{Y}
\end{equation}
is a Banach space.
Let us introduce the mapping
\begin{equation}\label{defG}
(y, u)\mapsto\mathcal{G}(y,u):= (\dis y_{t}-\Delta y+a\left(l(y(t))\right)y -u\chi_{\omega},y(0)-y^0)
\end{equation}
from $ \mathbb{ Y}\times L^\infty(\omega_T)\to L^\infty(Q)\times L^\infty(\Omega).$ The mapping $\mathcal{G}$ is well defined and  the state equation $y$ solution of \eqref{model} can be viewed as the equation
\begin{equation}\label{eqG}
\mathcal{G}(y,u)=0.
\end{equation}

\begin{lemma}\label{lemmeG}
Under the Assumption \ref{ass1}, the mapping $\mathcal{G}$ is of class $\mathcal{C}^{2}$. Moreover, the control-to-state mapping $G:L^\infty(\omega_T)\to\mathbb{ X}, u\mapsto y$ is also of class $\mathcal{C}^{2}$.
\end{lemma}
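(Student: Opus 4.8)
The plan is to verify that $\mathcal{G}$ is of class $\mathcal{C}^2$ by examining each of its constituent pieces, and then to apply the implicit function theorem to deduce the regularity of $G$. Writing $\mathcal{G}(y,u) = (\mathcal{G}_1(y,u), \mathcal{G}_2(y,u))$ with $\mathcal{G}_1(y,u) = y_t - \Delta y + a(l(y(t)))y - u\chi_\omega$ and $\mathcal{G}_2(y,u) = y(0) - y^0$, I first observe that the map $y \mapsto y_t - \Delta y$ is linear and bounded from $\mathbb{Y}$ to $L^\infty(Q)$ by the very definition \eqref{norm1} of the graph norm, hence $\mathcal{C}^\infty$; the map $u \mapsto u\chi_\omega$ is linear and bounded from $L^\infty(\omega_T)$ into $L^\infty(Q)$, hence also $\mathcal{C}^\infty$; and $y \mapsto y(0) - y^0$ is affine and bounded from $\mathbb{Y} \subset \mathbb{X} \subset C([0,T];L^2(\Omega))$ (using \eqref{contWTA}) into $L^\infty(\Omega)$ once we note $y(0) \in L^\infty(\Omega)$ for $y \in \mathbb{X}$. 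So the only nontrivial term is the nonlocal nonlinearity $N(y) := a(l(y(t)))\,y$.

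The core of the argument is therefore to show $N : \mathbb{X} \to L^\infty(Q)$ is $\mathcal{C}^2$ (it clearly maps $\mathbb{Y}$ to $L^\infty(Q)$ as well since $\mathbb{Y} \subset \mathbb{X}$). I would decompose $N$ as the composition of three maps: the linear bounded functional $y \mapsto l(y(\cdot)) = \int_\Omega y(\cdot,x')\,\dxp$, which sends $\mathbb{X}$ into $L^\infty(0,T)$ with norm controlled by $|\Omega|\,\|y\|_{L^\infty(Q)}$; the Nemytskii operator $g \mapsto a\circ g$ acting on $L^\infty(0,T)$; and the bilinear bounded multiplication map $(\psi, y) \mapsto \psi(t)\, y(t,x)$ from $L^\infty(0,T) \times \mathbb{X}$ into $L^\infty(Q)$. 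The bilinear map is $\mathcal{C}^\infty$ as every bounded multilinear map is. For the Nemytskii operator, since $a \in \mathcal{C}^2(\R)$ with $a, a', a''$ bounded (Assumption \ref{ass1}, \eqref{as1}), the standard theory of superposition operators on $L^\infty$ gives that $g \mapsto a\circ g$ is $\mathcal{C}^2$ from $L^\infty(0,T)$ into itself, with derivatives $h \mapsto (a'\circ g)\, h$ and $(h_1,h_2)\mapsto (a''\circ g)\,h_1 h_2$; here the boundedness of $a''$ is exactly what is needed for the second derivative to exist and be continuous, whereas on $L^p$ with $p<\infty$ one would need growth/integrability conditions. Composing, $N$ is $\mathcal{C}^2$, and hence $\mathcal{G}$ is $\mathcal{C}^2$ from $\mathbb{Y}\times L^\infty(\omega_T)$ to $L^\infty(Q)\times L^\infty(\Omega)$.

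For the statement about $G$, I would apply the implicit function theorem to the equation $\mathcal{G}(y,u)=0$ at a point $(\bar y,\bar u)$ with $\bar y = G(\bar u)$. By the above, $\mathcal{G}$ is $\mathcal{C}^2$; it remains to check that the partial derivative $\partial_y \mathcal{G}(\bar y,\bar u) : \mathbb{Y} \to L^\infty(Q)\times L^\infty(\Omega)$ is an isomorphism. A direct computation gives that $\partial_y \mathcal{G}(\bar y,\bar u)z = \big(z_t - \Delta z + a(l(\bar y))z + a'(l(\bar y))\, l(z)\, \bar y,\ z(0)\big)$, so surjectivity and injectivity amount to the unique solvability in $\mathbb{Y}$ of the linearized problem
\begin{equation*}
z_t - \Delta z + a(l(\bar y(t)))z + a'(l(\bar y(t)))\,\bar y \int_\Omega z(t,\sigma)\,\d\sigma = f \ \text{ in } Q, \quad \partial_\nu z = 0 \ \text{ in } \Sigma, \quad z(0,\cdot) = z^0,
\end{equation*}
for arbitrary $f\in L^\infty(Q)$, $z^0\in L^\infty(\Omega)$, with the estimate $\|z\|_{\mathbb{Y}} \le C(\|f\|_{L^\infty(Q)}+\|z^0\|_{L^\infty(\Omega)})$. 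This is a linear problem of the general type covered by Theorems \ref{theoremexistence} and \ref{theorembound0}: indeed $\tilde a_0 := a'(l(\bar y(t)))\,\bar y$ lies in $L^\infty(Q)$ with norm bounded by $M\|\bar y\|_{L^\infty(Q)}$ (this uses $\bar y\in L^\infty(Q)$ from Theorem \ref{theorembound} and $|a'|\le M$), and although the coefficient $a(l(\bar y))$ is a fixed $L^\infty$ function rather than $a(l(z))$, the existence, uniqueness and $L^\infty$-bound go through by the same fixed-point-free (linear) arguments; the $z_t-\Delta z\in L^\infty(Q)$ bound needed for $\mathbb{Y}$ then follows by writing $z_t-\Delta z = f - a(l(\bar y))z - \tilde a_0 l(z)$ and using $z\in L^\infty(Q)$. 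Hence $\partial_y\mathcal{G}(\bar y,\bar u)$ is an isomorphism, and the implicit function theorem yields that $u\mapsto G(u)$ is $\mathcal{C}^2$ from a neighborhood of $\bar u$ in $L^\infty(\omega_T)$ into $\mathbb{Y}\subset\mathbb{X}$; since $\bar u$ was arbitrary, $G$ is $\mathcal{C}^2$ on all of $L^\infty(\omega_T)$. The main obstacle is the verification that $\partial_y\mathcal{G}$ is an isomorphism between the right spaces — one must make sure the linearized equation is well-posed \emph{in $\mathbb{Y}$ with $L^\infty$ data}, which is why the paper set up the auxiliary problem \eqref{general} and proved Theorems \ref{theoremexistence} and \ref{theorembound0} in that generality.
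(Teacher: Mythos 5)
Your proposal is correct and follows essentially the same route as the paper: establish that $\mathcal{G}$ is $\mathcal{C}^2$, compute $\partial_y\mathcal{G}(y,u)$, show it is an isomorphism from $\mathbb{Y}$ onto $L^\infty(Q)\times L^\infty(\Omega)$ by solving the linearized problem with $\tilde a_0=a'(l(y(t)))y\in L^\infty(Q)$ via the paper's existence and $L^\infty$-boundedness theorems, recover $z_t-\Delta z\in L^\infty(Q)$ from the equation, and conclude with the implicit function theorem. You merely fill in details the paper states without proof (the Nemytskii/composition argument for $\mathcal{C}^2$-smoothness of $y\mapsto a(l(y(t)))y$ on $L^\infty$-type spaces, and the observation that the linearized equation has the frozen coefficient $a(l(\bar y(t)))$, for which the linear arguments of the appendix apply), which is consistent with, not divergent from, the paper's argument.
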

\begin{proof}
Under the Assumption \ref{ass1}, $a\in \mathcal{C}^2(\R)$ so we deduce that the first component  of $\mathcal{G}$ is of class $\mathcal{C}^2$. The second component of $\mathcal{G}$ is clearly of class $\mathcal{C}^{\infty}$. Therefore, the mapping $\mathcal{G}$ is of class $\mathcal{C}^{2}$.
Moreover,
$$\partial_y\mathcal{G}(y,u)\varphi=\left(\varphi_t+a'(l(y(t)))y\int_{\Omega}\varphi(t,\sigma)\d \sigma-\Delta \varphi+a(l(y(t)))\varphi,\varphi(0)\right).$$
Let $\varphi^0\in L^\infty(\Omega)$ and $f\in L^\infty(Q)$. We consider the linear problem
\begin{equation}\label{der}
	\left\{
	\begin{array}{rllll}
		\dis \varphi_t+a'(l(y(t)))y\int_{\Omega}\varphi(t,\sigma)\d\sigma-\Delta \varphi+a(l(y(t)))\varphi &=& f &\mbox{in}& Q,\\
		\dis \partial_{\nu}\varphi&=&0  &\mbox{in}& \Sigma,\\
		\varphi(0,\cdot)&=& \varphi^0 &\mbox{in}& \Omega.
	\end{array}
	\right.
\end{equation}
Then using theorems \ref{theoremexistence} and \ref{theorembound0} with $\tilde{a}_0=a'(l(y(t)))y\in L^\infty(Q)$ and thanks to \eqref{as1} and \eqref{estimationbound}, we deduce that \eqref{der} has a unique weak solution  $\varphi:=\varphi(\varphi^0,f)$ in $\mathbb{X}$ depending continuously on $\varphi^0\in L^\infty(\Omega)$ and on $f\in L^\infty(Q)$.  Moreover, since $y\in L^\infty(Q)$ and thanks to Assumption \ref{ass1}, we have $ \varphi_t-\Delta \varphi=f-a'(l(y(t)))y\int_{\Omega}\varphi(t,\sigma)\d\sigma-a(l(y(t)))\varphi\in L^\infty(Q)$. Therefore, $\varphi\in \mathbb{ Y}$. Hence,
$\dis \partial_y\mathcal{G}(y,u)$ define an isomorphism from $\mathbb{ Y}$ to $L^\infty(Q)\times L^\infty(\Omega)$. Using the Implicit Function Theorem, we deduce that $\mathcal{G}(y,u)=0$ implicitly define the control-to-state operator $G:u\mapsto y$ which is itself of class  $\mathcal{C}^{\infty}$.
\end{proof}
We have the following result.
\begin{proposition}\label{differentiability}
Let $u,v, w\in L^\infty(\omega_T)$.	Under the assumptions of Lemma \ref{lemmeG}, the first derivative of the control-to-state operator $G'(u)$ is given by $G'(u)v=z_v$, where $z_v$ is the unique weak solution $z\in \mathbb{X}$ to
\begin{equation}\label{diff1}
	\left\{
	\begin{array}{rllll}
		\dis z_t-\Delta z+a'(l(y(t)))y\int_{\Omega}z(\cdot,\sigma)\d\sigma+a(l(y(t)))z &=& v\chi_{\omega} &\mbox{in}& Q,\\
		\dis \partial_{\nu}z&=&0  &\mbox{in}& \Sigma,\\
		z(0,\cdot)&=& 0&\mbox{in}& \Omega.
	\end{array}
	\right.
\end{equation}
The second derivative $G''(u)$ is given by $G''(u)(v,w)=z_{vw}$, where $z_{vw}$ is the unique weak solution $z\in \mathbb{X}$	to
\begin{equation}\label{diff2}
	\left\{
	\begin{array}{rllll}
		\dis z_t-\Delta z+a(l(y(t)))z+a'(l(y(t)))y\int_{\Omega}z(\cdot,\sigma)\d\sigma &=&F &\mbox{in}& Q,\\
		\dis \partial_{\nu}z&=&0  &\mbox{in}& \Sigma,\\
		z(0,\cdot)&=& 0&\mbox{in}& \Omega,
	\end{array}
	\right.
\end{equation}
where
\begin{equation}\label{def0}
	\begin{array}{rllll}
		F&:=&\dis -a'(l(y(t)))z_v\int_{\Omega}z_w(\cdot,x)\dx-a'(l(y(t)))z_w\int_{\Omega}z_v(\cdot,\sigma)\d\sigma \\
		&&\dis -a''(l(y(t)))y\left(\int_{\Omega}z_v(\cdot,\sigma)\d\sigma\right)\left(\int_{\Omega}z_w(\cdot,\sigma)\d\sigma\right),
	\end{array}
\end{equation}
and $z_{v}:=G'(u)v,z_{w}:=G'(u)w$ are respective solutions to \eqref{diff1} with $v=v$ and $v=w$.
Moreover, for every $u\in L^\infty(\Omega)$, the linear mapping $v\mapsto z_v:= G'(u)v$ can be extended to a linear continuous mapping on $L^2(\omega_T)$ and there exists a constant $C:=C(\Omega,\|u\|_{L^\infty(\omega_T)},\|y^0\|_{L^\infty(\Omega)},T)>0$ such that
\begin{equation}\label{est1}
	\|z_v\|_{W(0,T)}\leq C\|v\|_{L^2(\omega_T)}.
\end{equation}	
\end{proposition}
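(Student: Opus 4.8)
The statement splits naturally into three parts, and I would attack them in the order they appear. First, the formulas for $G'(u)$ and $G''(u)$ follow from the Implicit Function Theorem machinery already set up in Lemma~\ref{lemmeG}: since $\mathcal{G}(G(u),u)=0$ and $\partial_y\mathcal{G}(y,u)$ is an isomorphism from $\mathbb{Y}$ onto $L^\infty(Q)\times L^\infty(\Omega)$, differentiating the identity $\mathcal{G}(G(u),u)=0$ once gives $\partial_y\mathcal{G}(y,u)G'(u)v + \partial_u\mathcal{G}(y,u)v = 0$, i.e. $z_v:=G'(u)v$ solves the linearized system obtained by computing $\partial_y\mathcal{G}$ and $\partial_u\mathcal{G}$ explicitly. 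A direct computation of these partial derivatives, using that $\tfrac{d}{d\varepsilon}a(l(y+\varepsilon\varphi))=a'(l(y))\int_\Omega\varphi(\cdot,\sigma)\,\d\sigma$ by linearity of $l$, yields exactly \eqref{diff1}. Differentiating the identity a second time in the directions $v$ and $w$ produces $\partial_y\mathcal{G}(y,u)G''(u)(v,w) = -\partial^2_{yy}\mathcal{G}(y,u)(z_v,z_w)$ (the mixed and $uu$ second derivatives vanish since $\mathcal{G}$ is affine in $u$), and expanding $\partial^2_{yy}\mathcal{G}$ via the product and chain rules — being careful with the three contributions coming from $a'(l(y))z_v\int_\Omega z_w$, its symmetric counterpart, and the $a''(l(y))y\,(\int z_v)(\int z_w)$ term — gives the right-hand side $F$ of \eqref{def0}, hence \eqref{diff2}. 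Well-posedness of \eqref{diff1} and \eqref{diff2} in $\mathbb{X}$ is not a new issue: both are instances of the linear problem \eqref{general} with $\tilde a_0 = a'(l(y(t)))y\in L^\infty(Q)$ (bounded thanks to \eqref{as1} and the $L^\infty$ bound \eqref{estimationbound} on $y$), and a right-hand side that is in $L^\infty(Q)$ — for \eqref{diff2} one checks $F\in L^\infty(Q)$ using $y,z_v,z_w\in L^\infty(Q)$ together with $|l(z_v(t))|\le|\Omega|^{1/2}\|z_v(t)\|_{L^2(\Omega)}$ and $|a'|,|a''|\le M$ — so Theorems~\ref{theoremexistence} and \ref{theorembound0} apply.

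The last assertion — that $v\mapsto z_v$ extends to a bounded linear map $L^2(\omega_T)\to W(0,T)$ with the estimate \eqref{est1} — is where the real work is, and I would prove it by the usual energy estimate applied directly to \eqref{diff1}, regardless of the fact that a priori $v\in L^\infty(\omega_T)$; the extension to $L^2$ then follows by density. The subtlety is the term $a'(l(y(t)))y\int_\Omega z(\cdot,\sigma)\,\d\sigma$, which is exactly of the nonlocal type already controlled in \eqref{ine1}: testing the equation with $z(t)$, integrating over $\Omega$, using $\int_\Omega \nabla z\cdot\nabla z = \|\nabla z(t)\|_{L^2(\Omega)}^2$, the lower bound $a(l(y(t)))\ge a_0$, and estimating
$$
\left|\int_\Omega a'(l(y(t)))y(t)\Big(\int_\Omega z(t,\sigma)\,\d\sigma\Big)z(t)\,\dx\right|\le M\|y\|_{L^\infty(Q)}|\Omega|^{1/2}\|z(t)\|_{L^2(\Omega)}^2,
$$
one arrives at a differential inequality of the form $\tfrac12\tfrac{d}{dt}\|z(t)\|_{L^2(\Omega)}^2+\|\nabla z(t)\|_{L^2(\Omega)}^2 \le C_1\|z(t)\|_{L^2(\Omega)}^2 + \|v(t)\|_{L^2(\omega)}\|z(t)\|_{L^2(\Omega)}$, where $C_1$ depends on $\|y\|_{L^\infty(Q)}$ (hence, via \eqref{estimationbound}, on $\|u\|_{L^\infty(\omega_T)}$, $\|y^0\|_{L^\infty(\Omega)}$), $M$, $|\Omega|$. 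Gronwall's inequality together with $z(0,\cdot)=0$ then bounds $\|z\|_{C([0,T];L^2(\Omega))}$ and $\|z\|_{L^2(0,T;H^1(\Omega))}$ by $C\|v\|_{L^2(\omega_T)}$; finally, comparing in $L^2(0,T;(H^1(\Omega))^\star)$ — i.e. reading off $z_t = \Delta z - a(l(y))z - a'(l(y))y\int_\Omega z(\cdot,\sigma)\,\d\sigma + v\chi_\omega$ and estimating each term in the dual norm — controls $\|z_t\|_{L^2(0,T;(H^1(\Omega))^\star)}$, giving \eqref{est1}. The density argument closing the extension is routine: $L^\infty(\omega_T)$ is dense in $L^2(\omega_T)$, the map is linear, and the estimate \eqref{est1} shows it is Cauchy along approximating sequences, so it extends uniquely and continuously.

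The main obstacle, I expect, is not any single estimate but the bookkeeping in the second-order computation: one must correctly identify which second derivatives of $\mathcal{G}$ survive (only $\partial^2_{yy}\mathcal{G}$, because $\mathcal{G}$ is affine in $u$ and the initial-condition component is linear), and then differentiate the composite term $a(l(y))y$ twice, keeping track of the asymmetry between the factor $y$ and the factor inside $l$ — this is what produces the two distinct $a'$-terms (one with $z_v\int z_w$, one with $z_w\int z_v$) plus the single $a''$-term in \eqref{def0}. Everything else — well-posedness via Theorems~\ref{theoremexistence}–\ref{theorembound0}, the energy estimate, the density extension — is standard once the nonlocal term is handled as in \eqref{ine1}.
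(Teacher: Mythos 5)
Your proposal is correct and follows essentially the same route as the paper: the formulas for $G'(u)$ and $G''(u)$ come from implicit differentiation of $\mathcal{G}(G(u),u)=0$ (the paper's ``straightforward computations''), well-posedness in $\mathbb{X}$ is obtained from Theorems \ref{theoremexistence} and \ref{theorembound0} after checking $F\in L^\infty(Q)$ via Assumption \ref{ass1}, and the estimate \eqref{est1} rests on taking $\tilde a_0=a'(l(y(t)))y\in L^\infty(Q)$, bounded through \eqref{estimationbound}. The only divergence is cosmetic: where the paper gets \eqref{est1} by directly invoking the a priori estimate of Theorem \ref{theoremexistence} for $v\in L^2(\omega_T)$, you re-derive it by an explicit energy/Gronwall argument and then extend by density, which simply unwinds the cited estimate (note the minor constant slip: the nonlocal term gives $M\|y\|_{L^\infty(Q)}|\Omega|\,\|z(t)\|^2_{L^2(\Omega)}$ rather than $M\|y\|_{L^\infty(Q)}|\Omega|^{1/2}\|z(t)\|^2_{L^2(\Omega)}$, which does not affect the conclusion).
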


\begin{proof}
Let $u,v,w\in L^\infty(\omega_T)$. It follows from Lemma \ref{lemmeG} that the control-to-state mapping $G:L^\infty(\omega_T)\to\mathbb{ X}, u\mapsto y$ is of class $\mathcal{C}^{2}$. Therefore, $G'(u)v$ and $G''(u)[v,w]$ exist. After straightforward computations, \eqref{diff1} and \eqref{diff2} follow easily. On one of the hand, since $v\in L^\infty(\omega_T)$, we use  Theorem \ref{theoremexistence} and Theorem \ref{theorembound0} to deduce that the system \eqref{diff1} admits a unique weak solution $z\in \mathbb{X}$. On the other hand thanks to Assumption \ref{ass1}, we obtain that $F$ defined in \eqref{def0} belongs to $L^\infty(Q)$. Therefore, the problem \eqref{diff2} has also a unique weak solution $z\in \mathbb{X}.$ Next, we note that if $v\in L^2(\omega_T)$, then \eqref{diff1} still have a unique weak solution $z\in W(0,T)$. Applying Theorem \ref{theoremexistence} with $\tilde{a}_0=a'(l(y(t)))y$ and thanks to \eqref{as1} and \eqref{estimationbound0}, we deduce that 
\begin{equation}\label{04}
	\begin{array}{rllll}
		\dis  \|z\|_{W(0,T)}\leq C\|v\|_{L^2(\omega_T)},
	\end{array}
\end{equation}	
with $C=C(\Omega,\|u\|_{L^\infty(\omega_T)},\|y^0\|_{L^\infty(\Omega)},T)$. This completes the proof.
\end{proof}

\subsection{The adjoint equation}
Next, let us introduce the adjoint state $q$ as the unique weak solution of the adjoint equation
\begin{equation}\label{adjoint}
\left\{
\begin{array}{rllll}
	\dis -q_t-\Delta q+a'(l(y(t)))\int_{\Omega}y(\cdot,\theta)q(\cdot,\theta)\d \theta+a(l(y(t)))q &=& y(u)-y_d &\mbox{in}& Q,\\
	\dis \partial_{\nu}q&=&0  &\mbox{in}& \Sigma,\\
	q(T,\cdot)&=& 0&\mbox{in}& \Omega.
\end{array}
\right.
\end{equation}
\begin{proposition}\label{existenceadjoint}
Let $u\in L^\infty(\omega_T)$ and $y_d\in L^\infty(Q)$. Let Assumption \ref{ass1} hold. Then there exists a unique weak solution $q\in \X$ to \eqref{adjoint}. Moreover, there exists a constant $C>0$ such that
\begin{equation}\label{estadjoint}
	\|q\|_{\C([0,T];L^2(\Omega))}+ \|q\|_{L^2(0,T;H^1(\Omega))} \leq C\left(\|y^0\|_{L^2(\Omega)}+\|y_d\|_{L^2(Q)}+\|u\|_{L^2(\omega_T)}\right)
\end{equation}
and
\begin{equation}\label{estadjointb}
	\|q\|_{L^\infty(Q)}\leq C\left(\|y^0\|_{L^\infty(\Omega)}+\|y_d\|_{L^\infty(Q)}+\|u\|_{L^\infty(\omega_T)}\right).
\end{equation}
\end{proposition}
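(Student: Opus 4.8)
The adjoint equation \eqref{adjoint} is a \emph{backward} linear parabolic problem, so the first move is to reverse time: set $p(t,\cdot):=q(T-t,\cdot)$, $\bar y(t,\cdot):=y(T-t,\cdot)$, $g(t,\cdot):=y(T-t,\cdot)-y_d(T-t,\cdot)$. Then $p$ solves a \emph{forward} problem of exactly the form
\[
p_t-\Delta p+a\bigl(l(\bar y(t))\bigr)p+a'\bigl(l(\bar y(t))\bigr)\int_\Omega \bar y(\cdot,\theta)p(\cdot,\theta)\,\d\theta = g,\qquad \partial_\nu p=0,\qquad p(0,\cdot)=0.
\]
The point is that this is \emph{not} quite \eqref{general}: the nonlocal zeroth-order term here is $a'(l(\bar y(t)))\int_\Omega \bar y\,p\,\d\theta$, i.e. the integral is weighted by the (bounded, known) function $\bar y$ rather than being $\tilde a_0\int_\Omega p\,\d\sigma$. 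So I cannot literally quote Theorem \ref{theoremexistence}; instead I will re-run its two arguments (energy estimate + the $L^\infty$ bootstrap of Theorem \ref{theorembound0}) for this linear equation, which is actually \emph{easier} because there is no fixed point to perform — the equation is linear in $p$, with coefficients $a(l(\bar y(t)))$ and $a'(l(\bar y(t)))\bar y$ that are fixed elements of $L^\infty(Q)$ (bounded by $a_1$ and $M\|y\|_{L^\infty(Q)}$ respectively, using Assumption \ref{ass1} and Theorem \ref{theorembound}).

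\textbf{Existence and the $W(0,T)$/energy estimate \eqref{estadjoint}.} For existence and uniqueness in $W(0,T)$ I would use the standard Galerkin (or Lions--Lax--Milgram in $W(0,T)$) scheme for linear parabolic equations with bounded coefficients: the bilinear form $a(t;\varphi,\psi)=\int_\Omega\nabla\varphi\nabla\psi+\int_\Omega a(l(\bar y(t)))\varphi\psi+\int_\Omega a'(l(\bar y(t)))\psi\int_\Omega\bar y\varphi\,\d\theta$ is bounded on $H^1(\Omega)$ (the nonlocal term is controlled by $M\|y\|_{L^\infty(Q)}|\Omega|^{1/2}\|\varphi\|_{L^2}\|\psi\|_{L^2}$ via Cauchy--Schwarz, exactly as in \eqref{ine1}) and satisfies a G\aa rding inequality. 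Testing with $p(t)$ and absorbing the nonlocal term with Young's inequality gives $\tfrac12\tfrac{d}{dt}\|p(t)\|_{L^2(\Omega)}^2+\|\nabla p(t)\|_{L^2(\Omega)}^2\le C\|p(t)\|_{L^2(\Omega)}^2+\tfrac12\|g(t)\|_{L^2(\Omega)}^2$; Gronwall then yields the $C([0,T];L^2)\cap L^2(0,T;H^1)$ bound in terms of $\|g\|_{L^2(Q)}$, and reading $p_t$ off from the equation gives the $W(0,T)$ bound. Finally $\|g\|_{L^2(Q)}\le \|y(u)\|_{L^2(Q)}+\|y_d\|_{L^2(Q)}\le C(\|y^0\|_{L^2(\Omega)}+\|u\|_{L^2(\omega_T)})+\|y_d\|_{L^2(Q)}$ by \eqref{estimation0}, which is \eqref{estadjoint} after undoing the time reversal. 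Uniqueness is the same energy argument applied to the difference of two solutions with $g=0$.

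\textbf{The $L^\infty$ bound \eqref{estadjointb}.} Here I follow the Stampacchia truncation argument of Theorem \ref{theorembound0} verbatim, with $g\in L^\infty(Q)$ (note $g\in L^\infty(Q)$ because $y=y(u)\in L^\infty(Q)$ by Theorem \ref{theorembound} and $y_d\in L^\infty(Q)$ by hypothesis). Testing with $(p-k)^+$: the local term $\int_\Omega a(l(\bar y(t)))p\,(p-k)^+$ and the nonlocal term $\int_\Omega a'(l(\bar y(t)))(p-k)^+\int_\Omega\bar y p\,\d\theta$ must both be handled — the only new point compared to Theorem \ref{theorembound0} is that the nonlocal coefficient $a'(l(\bar y(t)))\bar y$ is not sign-definite, so I first perform the exponential shift $\tilde p=e^{-rt}p$ with $r$ large enough (depending on $M$, $\|y\|_{L^\infty(Q)}$, $|\Omega|$, as in the passage to \eqref{general0}) so that $A((\tilde p-k),(\tilde p-k)^+)\ge \|\nabla(\tilde p-k)^+\|_{L^2(\Omega)}^2$; from there the chain \eqref{inter1}--\eqref{ineq10} and Theorem \ref{thmimp} apply unchanged, producing $\|\tilde p\|_{L^\infty(Q)}\le C(n)(\|g\|_{L^\infty(Q)}+\|p(0)\|_{L^\infty(\Omega)})=C(n)\|g\|_{L^\infty(Q)}$ since $p(0)=0$, hence $\|q\|_{L^\infty(Q)}\le C\|g\|_{L^\infty(Q)}\le C(\|y\|_{L^\infty(Q)}+\|y_d\|_{L^\infty(Q)})\le C(\|y^0\|_{L^\infty(\Omega)}+\|u\|_{L^\infty(\omega_T)}+\|y_d\|_{L^\infty(Q)})$ using \eqref{estimationbound}. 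Combining the last two steps gives $q\in\X$ and the two estimates.

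\textbf{Main obstacle.} The only genuinely new ingredient relative to the already-proved Theorems \ref{theoremexistence}, \ref{theorembound0} is that the nonlocal term in the adjoint is $a'(l(y(t)))\int_\Omega y\,q\,\d\theta$ rather than $\tilde a_0\int_\Omega q\,\d\sigma$; one must check that the factor $a'(l(y(t)))y$ is a bona fide $L^\infty(Q)$ function (it is, by Assumption \ref{ass1} together with the boundedness of $y$ from Theorem \ref{theorembound}) and that it can be absorbed in the energy and truncation estimates via Cauchy--Schwarz exactly as the term \eqref{ine1} was. Once that bookkeeping is in place, both proofs are essentially transcriptions of the ones already given, so I would write them briefly, pointing to the earlier arguments rather than repeating every line.
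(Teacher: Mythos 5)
Your proposal is correct and follows essentially the same route as the paper: the paper's own proof consists precisely of the time reversal $t\mapsto T-t$ followed by rerunning the arguments of Theorem \ref{theoremexistence} for existence, uniqueness and \eqref{estadjoint}, and of Theorem \ref{theorembound0} together with \eqref{estimationbound} for the $L^\infty$ bound \eqref{estadjointb}. Your explicit bookkeeping for the weighted nonlocal term $a'(l(y(t)))\int_\Omega y\,q\,\mathrm{d}\theta$ (Cauchy--Schwarz absorption as in \eqref{ine1}, exponential shift with $r$ depending on $M\|y\|_{L^\infty(Q)}|\Omega|$, and the observation that no fixed point is needed since the equation is linear in $q$) simply fills in the detail that the paper leaves implicit.
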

\begin{proof} We make a change of variable $t\to q(T-t,x)$ and using the same arguments as in Theorem \ref{theoremexistence} , we deduce the existence of a unique weak solution $q\in W(0,T)$ to \eqref{adjoint} and the estimate \eqref{estadjoint} follows. Since $y,y_d\in L^\infty(Q)$, we obtain using similar arguments as in Theorem \ref{theorembound0} that $q\in L^\infty(Q)$ and thanks to \eqref{estimationbound0}, we deduce \eqref{estadjointb}.	
\end{proof}

\begin{remark}
As is Lemma \ref{lemmeG}, one can prove using the Assumption  \ref{ass1} that the mapping $u\mapsto q$, where $q$ is the solution to the adjoint state is of class $\mathcal{C}^1.$
\end{remark}
In the following, we show that the control-to-state operator and the map $u\mapsto q(u)$ are Lipschitz continuous.
\begin{proposition}\label{prop2} The control-to-state mapping  $u\mapsto G(u)$ is a Lipschitz continuous function from $L^2(\omega_T)$ onto $L^2(0,T;H^1(\Omega))$. In addition, the mapping $u\mapsto q(u)$, where $q$ is the solution to \eqref{adjoint} is also Lipschitz continuous. More precisely, for all $u_1,u_2\in L^\infty(\omega_T)$, there is a constant $C=C(\|u_2\|_{L^\infty(\omega_T)},\|y^0\|_{L^\infty(\Omega)},a_0,\Omega,T)>0$ such that the following estimates hold
\begin{equation}\label{estim20}
	\|G(u_1)-G(u_2)\|_{L^2(0,T;H^1(\Omega))}\leq C\|u_1-u_2\|_{L^2(\omega_T)}
\end{equation}	
and
\begin{equation}\label{estim21}
	\|q(u_1)-q(u_2)\|_{L^2(0,T;H^1(\Omega))}\leq C\|u_1-u_2\|_{L^2(\omega_T)}.
\end{equation}
\end{proposition}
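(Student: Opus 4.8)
To prove \eqref{estim20}, I would fix $u_1,u_2\in L^\infty(\omega_T)$, set $y_i:=G(u_i)\in\mathbb X$ (Theorem~\ref{theorembound}) and $y:=y_1-y_2$. Subtracting the two copies of \eqref{model} and using
$$a(l(y_1(t)))y_1-a(l(y_2(t)))y_2=a(l(y_1(t)))\,y+\bigl(a(l(y_1(t)))-a(l(y_2(t)))\bigr)y_2,$$
one sees that $y$ solves, weakly, a linear Neumann problem with zero initial datum, coefficient $a(l(y_1(\cdot)))\in L^\infty(Q)$, and right-hand side $(u_1-u_2)\chi_\omega-\bigl(a(l(y_1(\cdot)))-a(l(y_2(\cdot)))\bigr)y_2$. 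The plan is then the standard energy estimate: test with $y(t)$, integrate over $\Omega$, drop the nonnegative term $\int_\Omega a(l(y_1(t)))y^2\,\dx$, bound $|a(l(y_1(t)))-a(l(y_2(t)))|\le M|\Omega|^{1/2}\|y(t)\|_{L^2(\Omega)}$ by the mean value theorem and \eqref{as1} (as in \eqref{ajout02}), use $\|y_2\|_{L^\infty(Q)}\le C(\|u_2\|_{L^\infty(\omega_T)},\|y^0\|_{L^\infty(\Omega)})$ from Theorem~\ref{theorembound}, and apply Young's inequality to the control term. This yields
$$\frac{d}{dt}\|y(t)\|_{L^2(\Omega)}^2+2\|\nabla y(t)\|_{L^2(\Omega)}^2\le C\|y(t)\|_{L^2(\Omega)}^2+\|(u_1-u_2)(t)\|_{L^2(\omega)}^2 .$$
Gronwall's lemma (since $y(0)=0$) bounds $\|y\|_{C([0,T];L^2(\Omega))}$ by $C\|u_1-u_2\|_{L^2(\omega_T)}$, and integrating the inequality over $(0,T)$ then bounds $\|\nabla y\|_{L^2(Q)}$ the same way; together these give \eqref{estim20}.

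\textbf{The adjoint map.} For \eqref{estim21} I would put $q_i:=q(u_i)$, $q:=q_1-q_2$, $y:=y_1-y_2$, and subtract the two copies of \eqref{adjoint}. Writing each nonlinear/nonlocal term as a part that is linear in $q$ with bounded coefficients plus a remainder --- for instance
$$a(l(y_1))q_1-a(l(y_2))q_2=a(l(y_2))\,q+\bigl(a(l(y_1))-a(l(y_2))\bigr)q_1,$$
and analogously splitting $a'(l(y_1))\int_\Omega y_1q_1-a'(l(y_2))\int_\Omega y_2q_2$ into $a'(l(y_2))\int_\Omega y_2q$ plus terms carrying either the factor $y=y_1-y_2$ or the factor $a'(l(y_1))-a'(l(y_2))$ --- one finds that $q$ solves a backward equation of the type covered by Proposition~\ref{existenceadjoint}, with terminal datum $0$ and a source $R$ that is a finite sum of products of (i) a factor bounded pointwise in $t$ by $C|\Omega|^{1/2}\|y(t)\|_{L^2(\Omega)}$ (namely $y$ itself, or $a^{(j)}(l(y_1))-a^{(j)}(l(y_2))$ for $j=0,1$, by the mean value theorem and \eqref{as1}) and (ii) a factor bounded in $C([0,T];L^2(\Omega))$ by the a priori estimates \eqref{estimation0} and \eqref{estadjoint}. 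Hence $\|R\|_{L^2(Q)}\le C\|u_1-u_2\|_{L^2(\omega_T)}$. I would then run the same energy estimate for the backward problem (test with $q(t)$, integrate over $(t,T)$, use $a\ge a_0$, and bound the nonlocal term by $|\int_\Omega a'(l(y_2(t)))(\int_\Omega y_2q\,\d\sigma)q\,\dx|\le M|\Omega|^{1/2}\|y_2(t)\|_{L^2(\Omega)}\|q(t)\|_{L^2(\Omega)}^2$, which is integrable in $t$ since $y_2\in C([0,T];L^2(\Omega))$), and conclude by backward Gronwall and integration over $(0,T)$, obtaining \eqref{estim21}.

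\textbf{Main obstacle.} The two energy estimates are routine; the genuine work is the bookkeeping in the adjoint step. Since the adjoint system \eqref{adjoint} is \emph{not} of the same type as \eqref{model} (as emphasized in the introduction), one cannot reuse the analysis of \eqref{model} for $q$, and one must track the coupling between $y$ and $q$ inside the nonlocal terms. The point is to arrange the decomposition so that the source $R$ is genuinely $O(\|u_1-u_2\|_{L^2(\omega_T)})$, which forces every factor multiplying the differences $y$ and $a^{(j)}(l(y_1))-a^{(j)}(l(y_2))$ to be a priori bounded; this is exactly where the boundedness results of Theorems~\ref{theorembound} and \ref{theorembound0} and the estimates of Proposition~\ref{existenceadjoint} enter, and it is what produces a constant depending only on the data and on the $L^\infty$-norms of the controls.
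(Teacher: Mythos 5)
Your proposal is correct and follows essentially the same route as the paper: the paper handles the state difference by setting $z=e^{-rt}(y_1-y_2)$ and choosing $r$ large to absorb the zeroth-order term (its system \eqref{a1}, estimates \eqref{ajout0}--\eqref{ajout3}), which is just the exponential-weight equivalent of your direct Gronwall argument, and it relies on the same ingredients (mean value theorem with \eqref{as1}, the $L^\infty$ bound on $y_2$ from \eqref{estimationbound}, Young's inequality). For the adjoint estimate the paper merely says ``using similar arguments and \eqref{estim20}''; your decomposition of the difference of the adjoint equations and the $O(\|u_1-u_2\|_{L^2(\omega_T)})$ bound on the resulting source correctly fill in that sketched step.
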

\begin{proof}
Let $u_1,u_2\in L^\infty(\omega_T) .$
Set $z:=e^{-rt}(y_1-y_2)$ with  $y_1:=y(u_1)$, $y_2:=y(u_2)$ being solutions of \eqref{model} with $v=u_1$ and $v=u_2$ respectively and for some $r>0$. Then $z$ satisfies the following system
\begin{equation}\label{a1}
	\left\{
	\begin{array}{rllll}
		\dis z_{t}-\Delta z+a\left(l(y_1(t))\right)z+rz &=&e^{-rt}\left[(u_1-u_2)\chi_{\omega}- (a\left(l(y_1(t))\right)-a\left(l(y_2(t))\right))y_2\right]& \mbox{in}& Q,\\
		\dis  \partial_\nu z&=&0& \mbox{in}& \Sigma, \\
		\dis  z(0,\cdot)&=&0 &\mbox{in}&\Omega.
	\end{array}
	\right.
\end{equation}
Multiplying the first equation in \eqref{a1} by $z$, integrating by parts over $Q$ and using \eqref{as1}, we arrive to
\begin{equation}\label{ajout0}
	\begin{array}{rllll}
		\|\nabla z\|^2_{L^2(Q)}+ (r+a_0)\|z\|^2_{L^2(Q)} &\leq& \|a(l(y_1(t)))-a(l(y_2(t)))\|_{L^2(Q)}\|y_2\|_{L^\infty(Q)}\|z\|_{L^2(Q)}\\
		&&+ \|u_1-u_2\|_{L^2(\omega_T)}\|z\|_{L^2(Q)}.
	\end{array}
\end{equation}
Observing that $y^0\in L^\infty(\Omega)$, then using \eqref{estimationbound}, we have that
\begin{equation}\label{ajout4}
	\|y_2\|_{L^\infty(Q)}\leq C(\|u_2\|_{L^\infty(\omega_T)}, \|y^0\|_{L^\infty(\Omega)}).
\end{equation}
Moreover, using the mean value theorem and thanks to  \eqref{as1}, we have that
\begin{equation}\label{ajout1}
	\| a\left(l(y_1(t))\right)-a\left(l(y_2(t))\right)\|_{L^2(Q)}\leq Me^{rT}|\Omega|\|z\|_{L^2(Q)}.
\end{equation}

Now, combining \eqref{ajout0}, \eqref{ajout4}, \eqref{ajout1} and thanks to the H\"{o}lder's inequality, we arrive to
\begin{equation}\label{ajout2}
	\begin{array}{rllll}
		\|\nabla z\|^2_{L^2(Q)}+ (r+a_0)\|z\|^2_{L^2(Q)} &\leq& C(\|u_2\|_{L^\infty(\omega_T)},M,\Omega, \|y^0\|_{L^\infty(\Omega)},T)\|z\|^2_{L^2(Q)}\\
		&&\dis + \frac{1}{2}\|u_1-u_2\|^2_{L^2(\omega_T)}+\frac{1}{2} \|z\|^2_{L^2(Q)}.
	\end{array}
\end{equation}
Choosing $\dis r=C(\|u_2\|_{L^\infty(\omega_T)},M,\Omega, \|y^0\|_{L^\infty(\Omega)},T)+1/2$, we obtain
\begin{equation}\label{ajout3}
	\begin{array}{rllll}
		\min(1,a_0)\left(\|\nabla z\|^2_{L^2(Q)}+ \|z\|^2_{L^2(Q)}\right) &\leq&\dis   \frac{1}{2}\|u_1-u_2\|^2_{L^2(\omega_T)}.
	\end{array}
\end{equation}
Hence, \eqref{estim20} follows. Next, using similar arguments as above and thanks to \eqref{estim20}, we deduce \eqref{estim21}. This completes the proof.
\end{proof}
\begin{remark}
We note that if we take $u_1,u_2\in \mathcal{U}_{ad}$ in the Proposition \ref{prop2}, then the constant $C$ defined in the Proposition \ref{prop2} is such that $C:=C(\alpha,\beta,\|y^0\|_{L^\infty(\Omega)},\Omega,T)$.
\end{remark}
Let us define the reduced cost functional as
\begin{equation}\label{defj}
\mathcal{J}(v):=J(G(v),v).
\end{equation}
\begin{proposition}[\bf Twice Fr\'echet differentiability of $J$]\label{diff4}
Let $y$ be the solution of \eqref{model}. Let Assumption \ref{ass1} hold. Under the hypothesis of Lemma \ref{lemmeG}, the functional $\mathcal{J}:L^\infty(\omega_T)\to \R$ defined in \eqref{defj} is twice continuously Fr\'echet differentiable and for every $u, v,w\in L^\infty(\omega_T)$, we have
\begin{equation}\label{diff5}
	\mathcal{J}'(u)v=\int_{\omega_T}\left( q+\mu u\right)v\,\dq,
\end{equation}
and
\begin{equation}\label{diff6}
	\left.
	\begin{array}{lllll}
		\mathcal{J}''(u)[v,w] &=&\dis \int_{Q}Fq\,\dq+ \int_{Q} z_vz_w\,\dq + \mu \int_{\omega_T}vw\,\dq.
	\end{array}
	\right.
\end{equation}	
where $z_{v}:=G'(u)v,\, z_{w}:=G'(u)w$ are respective solutions to \eqref{diff1} with $v=v$ and $v=w$, $F$ is defined in \eqref{def0} and $q$ is the unique weak solution to the adjoint equation \eqref{adjoint}.
\end{proposition}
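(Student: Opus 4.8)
The plan is to compute the first and second Fréchet derivatives of $\mathcal{J}=J\circ(G,\mathrm{id})$ by the chain rule, using the $\mathcal{C}^2$ regularity of $G$ established in Lemma~\ref{lemmeG} and the explicit formulas for $G'(u)$ and $G''(u)$ from Proposition~\ref{differentiability}, and then to eliminate the unknown state sensitivities $z_v$, $z_{vw}$ via an integration-by-parts duality argument against the adjoint state $q$ solving \eqref{adjoint}.

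First I would record that, since $J$ in \eqref{opt1} is a quadratic (hence $\mathcal{C}^\infty$) functional on $\mathbb{X}\times L^\infty(\omega_T)$ and $v\mapsto(G(v),v)$ is $\mathcal{C}^2$ from $L^\infty(\omega_T)$ to $\mathbb{X}\times L^\infty(\omega_T)$ by Lemma~\ref{lemmeG}, the composition $\mathcal{J}$ is twice continuously Fréchet differentiable. The chain rule then gives, for $u,v\in L^\infty(\omega_T)$,
\begin{equation*}
	\mathcal{J}'(u)v=\int_Q (G(u)-y_d)\,z_v\,\dq+\mu\int_{\omega_T}uv\,\dq,
\end{equation*}
where $z_v=G'(u)v$ solves \eqref{diff1}. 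To obtain \eqref{diff5} I would multiply the adjoint equation \eqref{adjoint} by $z_v$, integrate over $Q$, and integrate by parts in $t$ and in $x$; the boundary terms vanish because $z_v(0,\cdot)=0$ and $q(T,\cdot)=0$ and both satisfy homogeneous Neumann conditions. The nonlocal terms require care: one uses Fubini to rewrite $\int_Q a'(l(y))\big(\int_\Omega y q\,\d\theta\big)z_v\,\dq$ and $\int_Q a'(l(y))y\big(\int_\Omega z_v\,\d\sigma\big)q\,\dq$ and checks they coincide after swapping the roles of the integration variables, so that testing \eqref{adjoint} with $z_v$ and testing \eqref{diff1} with $q$ produce the same bilinear expression. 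This yields $\int_Q(G(u)-y_d)z_v\,\dq=\int_{\omega_T}qv\,\dq$, which gives \eqref{diff5}.

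For the second derivative, differentiating $\mathcal{J}'$ once more (or applying the second-order chain rule to $J$) gives
\begin{equation*}
	\mathcal{J}''(u)[v,w]=\int_Q (G(u)-y_d)\,z_{vw}\,\dq+\int_Q z_v z_w\,\dq+\mu\int_{\omega_T}vw\,\dq,
\end{equation*}
with $z_{vw}=G''(u)[v,w]$ solving \eqref{diff2}. Exactly as before, I would pair \eqref{adjoint} with $z_{vw}$ and \eqref{diff2} with $q$; the left-hand bilinear forms match (same nonlocal-term bookkeeping), and the right-hand side of \eqref{diff2} contributes $\int_Q Fq\,\dq$ with $F$ as in \eqref{def0}. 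Hence $\int_Q(G(u)-y_d)z_{vw}\,\dq=\int_Q Fq\,\dq$, and substituting gives \eqref{diff6}.

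**The main obstacle** is the careful handling of the nonlocal terms in the duality pairings: one must verify that the operator appearing in the linearized state equation \eqref{diff1} and the operator in the adjoint equation \eqref{adjoint} are genuinely formal adjoints of one another, i.e. that $\int_Q\big(a'(l(y))y\int_\Omega z(\cdot,\sigma)\,\d\sigma\big)q\,\dq=\int_Q\big(a'(l(y))\int_\Omega y(\cdot,\theta)q(\cdot,\theta)\,\d\theta\big)z\,\dq$, which is precisely why the adjoint system \eqref{adjoint} has the nonstandard form flagged in the introduction. A secondary technical point is justifying all the integrations by parts at the level of $W(0,T)$ regularity and density of $H(Q)$, and confirming that the formulas extend from $v,w\in L^\infty(\omega_T)$ to the relevant spaces via the continuity estimate \eqref{est1}; these are routine given Theorems~\ref{theoremexistence}, \ref{theorembound0} and Proposition~\ref{existenceadjoint}.
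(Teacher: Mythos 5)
Your proposal is correct and follows essentially the same route as the paper: the $\mathcal{C}^2$ property of $\mathcal{J}$ via the chain rule and Lemma \ref{lemmeG}, the intermediate formulas $\mathcal{J}'(u)v=\int_Q z_v(y(u)-y_d)\,\dq+\mu\int_{\omega_T}uv\,\dq$ and its second-order analogue, and then the elimination of $z_v$ and $z_{vw}$ by pairing the linearized equations \eqref{diff1}, \eqref{diff2} with the adjoint state $q$ of \eqref{adjoint} (the paper tests the linearized equations with $q$, you test the adjoint with $z_v$, $z_{vw}$ — the same duality identity). Your explicit Fubini verification that the nonlocal terms in \eqref{diff1} and \eqref{adjoint} are formal adjoints, since $a'(l(y(t)))$ depends only on $t$, is a correct spelling-out of what the paper leaves implicit in its integration by parts.
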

\begin{proof}
First, by the chain rule, we have that  $\mathcal{J}$ is twice continuously Fr\'echet differentiable, since by Lemma \ref{lemmeG}, $G$ has this property.\par 		
Let $u,v,w\in L^\infty(\omega_T)$.  After some straightforward calculations, we get
\begin{equation}\label{e1}
	\mathcal{J}'(u)v=\int_{Q} z_v(y(u)-y_d)\,\dq+ \mu \int_{\omega_T}uv\,\dq.
\end{equation}
Now, if we multiply the first equation of \eqref{diff1} by $q$ solution to the adjoint state \eqref{adjoint} and we integrate by parts over $Q$, we arrive to
\begin{equation}\label{e2}
	\int_{Q} z_v(y(u)-y_d)\,\dq=\int_{\omega_T}qv\,\dq.
\end{equation}
Combining \eqref{e1} and \eqref{e2} leads us to \eqref{diff5}. On the other hand, after some calculations, we obtain
\begin{equation}\label{e3}
	\left.
	\begin{array}{lllll}
		\mathcal{J}''(u)[v,w] &=&\dis \int_{Q}z_{vw}(y(u)-y_d)\,\dq + \int_{Q} z_vz_w\,\dq+\mu\int_{\omega_T} wv\,\dq.
	\end{array}
	\right.
\end{equation}	
Multiplying the equation \eqref{diff2} by $q$ solution to the adjoint state \eqref{adjoint} and after an integration by parts over $\Omega$, we get
\begin{equation}\label{e4}
	\left.
	\begin{array}{lllll}
		\dis \dis \int_{Q}z_{vw}(y(u)-y_d)\,\dq=\int_{Q}Fq\,\dq.
	\end{array}
	\right.
\end{equation}	
Therefore, combining \eqref{e3} and \eqref{e4}, we deduce \eqref{diff6}. The proof is finished.
\end{proof}

\subsection{First order necessary optimality conditions}
The aim of this section is to derive the first-order necessary optimality conditions and to characterize the optimal control. We have the following result.
\begin{theorem}\label{theoSO}
Let $u\in\mathcal U_{ad}$ be an $L^\infty$-local minimum for \eqref{opt}. Assume that Assumptions \ref{ass1} hold. Then,
\begin{equation}\label{ineq}
	\mathcal{J}'(u)(v-u)\geq 0\;\;\;\text{for every}\;\;\; v\in \U_{ad},
\end{equation}
equivalently	
\begin{equation}\label{ineq1}
	\int_{\omega_T}(q+\mu u)(v-u)\,\dx\geq 0\;\;\;\text{for every}\;\;\; v\in \U_{ad},
\end{equation}
where $q$ is the unique weak solution to \eqref{adjoint}.
\end{theorem}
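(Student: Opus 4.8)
The plan is to establish the standard variational inequality for a local minimum of a differentiable functional over a convex admissible set, then translate it into the pointwise-in-$q$ form using the expression for $\mathcal{J}'$ from Proposition \ref{diff4}. First I would recall that, by the results of the previous subsections, the reduced cost functional $\mathcal{J}(v)=J(G(v),v)$ is of class $\mathcal{C}^1$ (indeed $\mathcal{C}^2$) on $L^\infty(\omega_T)$ by Lemma \ref{lemmeG} and Proposition \ref{diff4}, and that $\U_{ad}$ is a convex subset of $L^\infty(\omega_T)$. Fix any $v\in\U_{ad}$ and, for $t\in(0,1]$, set $u_t:=u+t(v-u)=(1-t)u+tv$. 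By convexity $u_t\in\U_{ad}$, and since $\|u_t-u\|_{L^\infty(\omega_T)}=t\|v-u\|_{L^\infty(\omega_T)}\to 0$ as $t\to 0^+$, for $t$ small enough $u_t$ lies in the ball $B^\infty_\varepsilon(u)$ on which $u$ is a local minimum (Definition \ref{defopt}). Hence $\mathcal{J}(u_t)\geq \mathcal{J}(u)$ for all such $t$, i.e. $\frac{\mathcal{J}(u+t(v-u))-\mathcal{J}(u)}{t}\geq 0$.

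Next I would pass to the limit $t\to 0^+$ in this difference quotient. Since $\mathcal{J}$ is Fr\'echet (hence Gâteaux) differentiable at $u$, the limit is exactly the directional derivative $\mathcal{J}'(u)(v-u)$, and therefore $\mathcal{J}'(u)(v-u)\geq 0$. As $v\in\U_{ad}$ was arbitrary, this proves \eqref{ineq}. Finally, I would invoke the explicit formula \eqref{diff5} from Proposition \ref{diff4}, namely $\mathcal{J}'(u)v=\int_{\omega_T}(q+\mu u)v\,\dq$ with $q$ the unique weak solution of the adjoint equation \eqref{adjoint} (whose well-posedness and regularity are guaranteed by Proposition \ref{existenceadjoint}); applying it with the direction $v-u\in L^\infty(\omega_T)$ gives $\mathcal{J}'(u)(v-u)=\int_{\omega_T}(q+\mu u)(v-u)\,\dq$, so \eqref{ineq} and \eqref{ineq1} are equivalent. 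This completes the proof.

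There is no serious obstacle here: the argument is the classical first-order necessary condition on a convex set, and all the analytic heavy lifting — differentiability of $G$ and of $\mathcal{J}$, the adjoint-state representation of $\mathcal{J}'$ — has already been carried out in Lemma \ref{lemmeG} and Propositions \ref{existenceadjoint} and \ref{diff4}. The only point requiring a modicum of care is the admissibility of the perturbed controls $u_t$, which is why the convexity of $\U_{ad}$ (rather than mere closedness) is used, together with the fact that the $L^\infty$-local minimality in Definition \ref{defopt} is compatible with the $L^\infty$-convergence $u_t\to u$; one should also note that the differentiability being in the $L^\infty(\omega_T)$-topology matches the notion of $L^\infty$-local solution, so the directional derivative along $v-u$ is legitimately the limit of the difference quotients. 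If one instead wanted to emphasize robustness, one could remark that the same computation shows \eqref{ineq1} holds for any $L^p$-local minimum with $1\le p\le\infty$, since $v-u$ is bounded and the difference quotient argument only needs $u_t\in\U_{ad}$ and $u_t\to u$ in the relevant norm.
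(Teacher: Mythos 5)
Your proof is correct and follows essentially the same route as the paper: the convexity of $\U_{ad}$ gives admissibility of $u+\lambda(v-u)$, the $L^\infty$-local minimality yields a nonnegative difference quotient for small $\lambda$, and passing to the limit together with formula \eqref{diff5} produces \eqref{ineq} and \eqref{ineq1}. Your extra care about $u_t$ eventually lying in the ball $B^\infty_\varepsilon(u)$ is a small refinement of the same argument, not a different approach.
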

\begin{proof}
Let $v\in \U_{ad}$ be arbitrary. Since $\U_{ad}$ is convex,  we have that $u+\lambda(v-u)\in \U_{ad}$ for all $\lambda\in (0,1]$. But $u$ is an $L^\infty$-local minimum so, $\mathcal{J}(u+\lambda(v-u))\geq \mathcal{J}(u)$. Hence,
$\dis\frac{\mathcal{J}(u+\lambda(v-u))-\mathcal{J}(u)}{\lambda}\geq 0$ for all $\lambda\in (0,1].$
Letting $\lambda\downarrow 0$ in the latter inequality, we obtain 	\eqref{ineq}. Thanks to \eqref{diff5}, we obtain
$$
\dis\int_{\omega_T}(q+\mu u)(v-u)\,\dx\geq 0\;\;\;\text{for every}\;\;\; v\in \U_{ad}.$$
This completes the proof.
\end{proof}
\begin{remark}
\begin{enumerate}
	\item[ $ $]
	\item 	We notice that any $L^\infty$-local solution of the variational inequality \eqref{ineq} is also a $L^2$-local one.
	\item 	The condition \eqref{ineq} can be rewritten as follows:  for a.e. $(x,t)\in \omega_T$,
	\begin{equation}\label{contr1vac}
		\dis \left\{\begin{array}{rlllll}
			u(x,t)=\alpha&\hbox{if}&\mu u(x,t)+q(x,t)>0,\\
			u(x,t)\in [\alpha,\beta] &\hbox{if}&\mu u(x,t)+q(x,t)=0,\\
			u(x,t)=\beta&\hbox{if}&\mu u(x,t)+q(x,t)<0,
		\end{array}
		\right.
	\end{equation}
	equivalently
	\begin{equation}\label{contr1}
		u=\min\left(\max\left(\alpha,-\frac{q}{\mu}\right),\beta\right) \text{ in }\omega_T.
	\end{equation}
\end{enumerate}
\end{remark}	

\begin{lemma}\label{cont1} Let $u\in L^\infty(\omega_T)$. Let Assumption \ref{ass1} hold. Then for every $u\in L^\infty(\omega_T)$,  the linear mapping $v\mapsto \mathcal{J}'(u)v$ can be extended to a linear continuous mapping $\mathcal{J}'(u):L^2(\omega_T)\to \R$ given by \eqref{diff5}. Moreover the bilinear  mapping $(v,w)\mapsto \mathcal{J}''(u)[v,w]$ can be extended to a bilinear continuous mapping on $\mathcal{J}''(u):L^2(\omega_T)\times L^2(\omega_T)\to \R$ given by \eqref{diff6}.
	\end{lemma}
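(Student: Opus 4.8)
The plan is to exploit the explicit formulas \eqref{diff5} and \eqref{diff6} together with the $L^2\to W(0,T)$ estimate \eqref{est1} for $G'(u)$ already established in Proposition \ref{differentiability}, and the boundedness of the adjoint state $q$ from Proposition \ref{existenceadjoint}. Fix $u\in L^\infty(\omega_T)$; note $q=q(u)\in\X$ is then a fixed element of $L^\infty(Q)\cap C([0,T];L^2(\Omega))$, and $y=G(u)\in\X$ is fixed and bounded. First I would treat the first derivative: for $v\in L^\infty(\omega_T)$ the identity \eqref{diff5} reads $\mathcal{J}'(u)v=\int_{\omega_T}(q+\mu u)v\,\dq$, and since $q+\mu u\in L^\infty(\omega_T)\subset L^2(\omega_T)$, the right-hand side is a bounded linear functional of $v$ for the $L^2(\omega_T)$-norm, with norm at most $\|q+\mu u\|_{L^2(\omega_T)}$. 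Because $L^\infty(\omega_T)$ is dense in $L^2(\omega_T)$, this functional has a unique continuous extension to $L^2(\omega_T)$, and the extension is still represented by \eqref{diff5}. This gives the first assertion.

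For the second derivative, I would start from \eqref{diff6}, namely
\[
\mathcal{J}''(u)[v,w]=\int_Q Fq\,\dq+\int_Q z_vz_w\,\dq+\mu\int_{\omega_T}vw\,\dq,
\]
with $z_v=G'(u)v$, $z_w=G'(u)w$ and $F$ given by \eqref{def0}. Each of the three terms must be bounded by $C\|v\|_{L^2(\omega_T)}\|w\|_{L^2(\omega_T)}$. The last term is immediate by Cauchy--Schwarz. For the middle term, by Proposition \ref{differentiability} the map $v\mapsto z_v$ extends continuously from $L^2(\omega_T)$ to $W(0,T)\hookrightarrow L^2(Q)$, so $\|z_v\|_{L^2(Q)}\le C\|v\|_{L^2(\omega_T)}$ and likewise for $z_w$; Cauchy--Schwarz then bounds $\int_Q z_vz_w\,\dq$. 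For the first term I would estimate $F$ in, say, $L^1(Q)$ or $L^2(Q)$: each summand of $F$ in \eqref{def0} is a product of $a'$ or $a''$ (bounded by $M$ via \eqref{as1}), of $y$ (bounded, since $y\in L^\infty(Q)$), and of two factors each of the form $z_v$, $z_w$, $\int_\Omega z_v(\cdot,\sigma)\d\sigma$, or $\int_\Omega z_w(\cdot,\sigma)\d\sigma$. Using the Cauchy--Schwarz inequality in $\sigma$ as in \eqref{conv2}, one has $\big|\int_\Omega z_v(t,\sigma)\d\sigma\big|\le|\Omega|^{1/2}\|z_v(t)\|_{L^2(\Omega)}$, so each such factor is controlled in $L^2(0,T;L^2(\Omega))=L^2(Q)$ by $C\|v\|_{L^2(\omega_T)}$. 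Hence $\|F\|_{L^1(Q)}\le C\|v\|_{L^2(\omega_T)}\|w\|_{L^2(\omega_T)}$, and since $q\in L^\infty(Q)$, $\big|\int_Q Fq\,\dq\big|\le\|q\|_{L^\infty(Q)}\|F\|_{L^1(Q)}\le C\|v\|_{L^2(\omega_T)}\|w\|_{L^2(\omega_T)}$. Combining the three bounds shows $(v,w)\mapsto\mathcal{J}''(u)[v,w]$ is continuous on $L^\infty(\omega_T)\times L^\infty(\omega_T)$ for the $L^2$-norm, and a density argument again yields the unique continuous bilinear extension to $L^2(\omega_T)\times L^2(\omega_T)$, still given by \eqref{diff6}.

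The only genuinely delicate point is making sure the factors involving $y$ do not cause trouble: the terms with $a''(l(y(t)))y$ require $y\in L^\infty(Q)$, which holds by Theorem \ref{theorembound} since we have assumed $y^0\in L^\infty(\Omega)$ and here $u\in L^\infty(\omega_T)$; this is exactly why the hypotheses are stated the way they are. Everything else is a routine combination of Hölder/Cauchy--Schwarz, the a priori bound \eqref{est1}, and the density of $L^\infty(\omega_T)$ in $L^2(\omega_T)$, so no substantial obstacle is expected.
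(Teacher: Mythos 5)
Your proposal is correct and follows essentially the same route as the paper: bound each of the three terms in \eqref{diff6} (and the single term in \eqref{diff5}) using the boundedness of $q$ and $y$, the estimate \eqref{est1} for $z_v,z_w$, and H\"older/Cauchy--Schwarz, then extend by density. The only minor difference is that you pair $F\in L^1(Q)$ with $q\in L^\infty(Q)$, whereas the paper estimates $\|F\|_{L^2(Q)}$ and pairs it with $\|q\|_{L^2(Q)}$; both yield the bilinear bound $C\|v\|_{L^2(\omega_T)}\|w\|_{L^2(\omega_T)}$.
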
	
	\begin{proof} Let $u\in L^\infty(\omega_T)$ and $v\in L^2(\omega_T)$. From \eqref{diff5}, we have that
$$
\mathcal{J}'(u)v=\int_{\omega_T}\left( q+\mu u\right)v\,\dq,
$$
where  $q$ is the solution of \eqref{adjoint}.
Using \eqref{estadjoint},  we have that there is a constant $C>0$ independent of $v$ such that
$$
\begin{array}{lll}
	\dis |\mathcal{J}'(u)v|\leq C\|v\|_{L^2(\omega_T)}.
\end{array}
$$
Thus, the mapping $v\mapsto \mathcal{J}'(u)v$ is linear and continuous on $L^2(\omega_T)$. Next,
let and $v,w\in L^2(\omega_T)$. From \eqref{diff6}, we have

\begin{equation}\label{e01}
	\begin{array}{lll}
		\dis |\mathcal{J}''(u)[v,w]|&\leq& \|F\|_{L^2(Q)}\|q\|_{L^2(Q)}+\|z_v\|_{L^2(Q)}\|z_w\|_{L^2(Q)}+\mu \|v\|_{L^2(\omega_T)}\|w\|_{L^2(\omega_T)}.
	\end{array}
\end{equation}
Using the H\"{o}lder's inequality and \eqref{estimationbound}, we have 	
\begin{equation}\label{e02}
	\begin{array}{lll}
		\dis  \|F\|_{L^2(Q)}&\leq& 2M|\Omega|\|z_v\|_{L^2(Q)}\|z_w\|_{L^2(Q)}+M\|y\|_{L^\infty(Q)}|\Omega|^2\|z_v\|_{L^2(Q)}\|z_w\|_{L^2(Q)}\\
		&\leq& C(M,\|u\|_{L^\infty(\omega_T)},\|y^0\|_{L^\infty(\Omega)},\Omega)\|z_v\|_{L^2(Q)}\|z_w\|_{L^2(Q)}.
	\end{array}
\end{equation}	
Therefore, combining \eqref{est1}, \eqref{e01} and \eqref{e02}, we deduce that, there is a constant $C>0$ independent of $v$ and $w$ such that
$$\dis |\mathcal{J}''(u)[v,w]|\leq C \|v\|_{L^2(\omega_T)}\|w\|_{L^2(\omega_T)}.$$
Hence, the 	mapping $(v,w)\mapsto\mathcal{J}''(u)[v,w]$ is a bilinear continuous mapping on $L^2(\omega_T)\times L^2(\omega_T)$. This completes the proof.
\end{proof}
\section{Second order optimality conditions}\label{second}
\subsection{Second order necessary optimality conditions}\label{sufficientoptcond}
Note that the cost functional $\mathcal{J}$ associated to the optimization problem \eqref{opt} is non-convex and the first order optimality conditions given in Theorem \ref{theoSO} are necessary but not sufficient for optimality.

Before establishing the second-order necessary and sufficient conditions, we introduce some preliminary concepts retrieved from \cite{aronna2021,casas2012}.

\begin{definition}
$ $
\begin{enumerate}[-]
	\item The set of feasible directions is the set $S(u)$ defined by
	\begin{equation}\label{fset}
		S(u):=\{v\in L^\infty(\omega_T) : v=\lambda(w-u)\quad\text{for some}\;\;\lambda>0\;\;\text{and}\;\;w\in \mathcal{U}_{ad}\}.
	\end{equation}
	\item The critical cone is the set $C(u)$ defined by
	\begin{equation}\label{crcone}
		C(u):=\text{cl}_{L^2(\omega_T)}(S(u))\cap \{v\in L^2(\omega_T) : \mathcal{J}'(u)v=0\}.
	\end{equation}
	\item We also introduce the set $D(u)$ defined by
	\begin{equation}\label{du}
		D(u):=\{v\in S(u) : \mathcal{J}'(u)v=0\}.
	\end{equation}
\end{enumerate}
\end{definition}
We have the following result giving a characterization of the critical cone $C(u)$ defined in \eqref{crcone}. The proof follows the lines of the proof in \cite[Page 18]{aronna2021} (see also \cite[page 273]{casas2012}), so we omit it.
\begin{proposition}
Consider $v$ defined as follows:
for a.e $(x,t)\in \omega_T$,
\begin{equation}\label{eq1}
	v(x,t)\left\{
	\begin{array}{lllll}
		\dis  \geq 0 &if&  u(x,t)=\alpha,\\
		\dis \leq 0&if& u(x,t)=\beta,\\
		0&if&\mu u(x,t)+q(x,t)\neq 0,
	\end{array}
	\right.
\end{equation}
where $q$ is solution to \eqref{adjoint}.
The critical cone  $C(u)$ defined in \eqref{crcone} can be rewritten as 	
\begin{equation}\label{ccone}
	C(u)= \{v\in L^2(\omega_T) : v \;\text{fulfills}\; \eqref{eq1}\}.
\end{equation}
\end{proposition}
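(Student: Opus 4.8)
The goal is to show the two descriptions of the critical cone coincide:
$$
\mathrm{cl}_{L^2(\omega_T)}(S(u))\cap\{v\in L^2(\omega_T):\mathcal J'(u)v=0\}
=\{v\in L^2(\omega_T):v\text{ satisfies }\eqref{eq1}\}.
$$
The plan is to argue by double inclusion, relying on the expression $\mathcal J'(u)v=\int_{\omega_T}(q+\mu u)v\,\dq$ from \eqref{diff5}, on the pointwise characterization \eqref{contr1vac} of the optimal control (which identifies the sign of $\mu u+q$ with the active bound), and on a careful description of the closure $\mathrm{cl}_{L^2(\omega_T)}(S(u))$.

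\textbf{Step 1: the tangent cone.} First I would show that
$$
\mathrm{cl}_{L^2(\omega_T)}(S(u))=\{v\in L^2(\omega_T): v(x,t)\geq 0\text{ a.e. where }u(x,t)=\alpha,\ v(x,t)\leq 0\text{ a.e. where }u(x,t)=\beta\}.
$$
The inclusion ``$\subseteq$'' is immediate: every $v=\lambda(w-u)\in S(u)$ with $w\in\mathcal U_{ad}$ satisfies these sign conditions pointwise, and the conditions are preserved under $L^2$-limits (a subsequence converges a.e., and one-sided inequalities pass to the limit). For ``$\supseteq$'', given $v$ in the right-hand set one truncates: set $v_k:=\mathrm{proj}_{[-k,k]}(v)$ and then show $u+\tfrac1k v_k\in\mathcal U_{ad}$ for $k$ large enough on the set where $\alpha<u<\beta$, handling the sets $\{u=\alpha\}$ and $\{u=\beta\}$ separately using the sign conditions; this exhibits $v_k\in S(u)$ with $v_k\to v$ in $L^2(\omega_T)$. (This is the standard argument and is exactly what is referenced in \cite[Page 18]{aronna2021}.)

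\textbf{Step 2: intersecting with $\{\mathcal J'(u)v=0\}$.} With Step 1 in hand, a $v\in\mathrm{cl}_{L^2}(S(u))$ lies in $C(u)$ iff $\int_{\omega_T}(\mu u+q)v\,\dq=0$. Using \eqref{contr1vac}: on $\{\mu u+q>0\}$ we have $u=\alpha$, hence $v\geq 0$, so the integrand $(\mu u+q)v\geq 0$; on $\{\mu u+q<0\}$ we have $u=\beta$, hence $v\leq 0$, so again $(\mu u+q)v\geq 0$; on $\{\mu u+q=0\}$ the integrand vanishes. Thus the nonnegative integrand $(\mu u+q)v$ has zero integral, which forces $(\mu u+q)v=0$ a.e.\ in $\omega_T$, i.e.\ $v=0$ a.e.\ on the set $\{\mu u+q\neq 0\}$. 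Combined with the sign conditions from Step 1, this says precisely that $v$ satisfies \eqref{eq1}. Conversely, if $v$ satisfies \eqref{eq1} then the sign conditions put $v\in\mathrm{cl}_{L^2}(S(u))$ by Step 1, and $v=0$ on $\{\mu u+q\neq 0\}$ together with $(\mu u+q)=0$ on the complement gives $\mathcal J'(u)v=0$. This proves \eqref{ccone}.

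\textbf{Main obstacle.} The only genuinely delicate point is Step 1, specifically the ``$\supseteq$'' direction: one must produce, for an arbitrary $v$ with the correct one-sided signs, an approximating sequence in $S(u)$, which requires the truncation-and-small-step construction and a measure-theoretic check that $u+\tfrac1k v_k$ stays in $[\alpha,\beta]$ a.e. Since the proposition explicitly defers this to \cite{aronna2021,casas2012}, in the write-up I would state Step 1 as the known characterization of the closure of the cone of feasible directions and then carry out only Steps 2 in detail; but conceptually Step 1 is where the real work sits.
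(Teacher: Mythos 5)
The paper offers no proof of this proposition (it is explicitly deferred to \cite{aronna2021,casas2012}), and your two-step plan --- first identify $\mathrm{cl}_{L^2(\omega_T)}(S(u))$ with the set of directions satisfying the sign conditions on $\{u=\alpha\}$ and $\{u=\beta\}$, then intersect with $\ker\mathcal J'(u)$ using \eqref{diff5} and the first-order relations \eqref{contr1vac} --- is exactly the standard route taken in those references. Your Step 2 is complete and correct, including the key point that $(\mu u+q)v\ge 0$ a.e.\ together with $\int_{\omega_T}(\mu u+q)v\,\dq=0$ forces $v=0$ a.e.\ on $\{\mu u+q\neq 0\}$; note that this step genuinely requires $u$ to satisfy \eqref{ineq}/\eqref{contr1vac}, which is the standing assumption of the section.

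There is, however, a concrete gap in the construction you sketch for the inclusion $\supseteq$ in Step 1. With $v_k=\mathrm{proj}_{[-k,k]}(v)$ and step $\tfrac1k$, the function $u+\tfrac1k v_k$ need not lie in $\mathcal{U}_{ad}$ for any $k$: the problem is not located on $\{u=\alpha\}$ or $\{u=\beta\}$, but on the region where $u$ is strictly between the bounds yet arbitrarily close to them, since there $\tfrac1k|v_k|$ can still be of size $1$. For example, if $\alpha<u<\beta$ a.e.\ with $\operatorname{ess\,inf}(u-\alpha)=0$ and $v\equiv -1$, then $v$ satisfies the sign conditions vacuously, while $v_k\equiv-1$ and $u+\tfrac1k v_k<\alpha$ on a set of positive measure for every $k$; in fact $-1\notin S(u)$ at all, so your approximating sequence never enters $S(u)$. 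The standard fix is a $u$-dependent truncation: set $v_k:=\max\bigl(-k(u-\alpha),\min(v,\,k(\beta-u))\bigr)$. Then $v_k=k(w_k-u)$ with $w_k:=u+\tfrac1k v_k\in\mathcal{U}_{ad}$, so $v_k\in S(u)$ as defined in \eqref{fset}; moreover the sign conditions give $|v_k|\le |v|$ and $v_k\to v$ a.e., hence $v_k\to v$ in $L^2(\omega_T)$ by dominated convergence. With this replacement your argument is complete and coincides with the one the paper points to.
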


In the rest of the paper, we will adopt the following notation $\mathcal{J}''(u)v^2:=\mathcal{J}''(u)[v,v]$.

\begin{proposition}[Second order necessary optimality conditions]
Let $u\in \mathcal{U}$ be a $L^\infty$-local solution of system \eqref{opt2}. Then $\mathcal{J}''(u)v^2\geq 0$, for all $v\in C(u)$.
\end{proposition}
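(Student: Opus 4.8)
The plan is to follow the classical strategy for deriving second-order necessary conditions for problems with box constraints, adapting it to the reduced functional $\mathcal{J}$ at the $L^\infty$-local solution $u$. First I would establish the inequality $\mathcal{J}''(u)v^2 \geq 0$ for every $v$ in the set $D(u)$ of admissible feasible directions annihilating $\mathcal{J}'(u)$, and then pass to the whole critical cone $C(u)$ by a density argument. For the first part, fix $v \in D(u)$, so that $v = \lambda(w-u)$ for some $\lambda > 0$ and $w \in \mathcal{U}_{ad}$, with $\mathcal{J}'(u)v = 0$; by convexity of $\mathcal{U}_{ad}$ we have $u + \rho v \in \mathcal{U}_{ad}$ for all $\rho \in [0, 1/\lambda]$. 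Since $u$ is an $L^\infty$-local minimum and $\mathcal{J}$ is twice continuously Fréchet differentiable by Proposition \ref{diff4}, a second-order Taylor expansion gives
\begin{equation*}
	0 \leq \mathcal{J}(u + \rho v) - \mathcal{J}(u) = \rho \mathcal{J}'(u)v + \frac{\rho^2}{2}\mathcal{J}''(u)v^2 + o(\rho^2) = \frac{\rho^2}{2}\mathcal{J}''(u)v^2 + o(\rho^2),
\end{equation*}
using $\mathcal{J}'(u)v = 0$. Dividing by $\rho^2$ and letting $\rho \downarrow 0$ yields $\mathcal{J}''(u)v^2 \geq 0$ for all $v \in D(u)$.

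Next I would upgrade this to all $v \in C(u)$. By Lemma \ref{cont1}, the bilinear form $\mathcal{J}''(u)$ extends continuously to $L^2(\omega_T) \times L^2(\omega_T)$, so $v \mapsto \mathcal{J}''(u)v^2$ is continuous on $L^2(\omega_T)$. The key point is that $D(u)$ is dense in $C(u)$ for the $L^2(\omega_T)$-topology: given $v \in C(u)$, using the characterization \eqref{ccone} one constructs a sequence $v_k \in D(u)$ with $v_k \to v$ in $L^2(\omega_T)$ — typically by truncating $v$ on the sets where $u$ is at the bounds $\alpha$ or $\beta$ and where the sign of $v$ must be preserved, say setting $v_k = \mathrm{proj}$-type modifications that keep $v_k$ a feasible direction (i.e. $u + t v_k \in \mathcal{U}_{ad}$ for small $t$) and still satisfy $\mathcal{J}'(u)v_k = 0$ because $v_k$ vanishes wherever $\mu u + q \neq 0$. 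Passing to the limit in $\mathcal{J}''(u)v_k^2 \geq 0$ gives $\mathcal{J}''(u)v^2 \geq 0$.

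The main obstacle is the density of $D(u)$ in $C(u)$: one must carefully produce feasible directions approximating an arbitrary critical direction without destroying either the feasibility condition or the orthogonality $\mathcal{J}'(u)v = 0$. This is handled by exploiting that elements of $C(u)$ already vanish a.e. on $\{\mu u + q \neq 0\}$, so the only constraint to respect in the approximation is the sign condition on $\{u = \alpha\}$ and $\{u = \beta\}$; there one replaces $v$ by $\max(v, -k\,\mathrm{dist})$-type cutoffs that stay nonnegative (resp. nonpositive) and converge to $v$ in $L^2$. Since the proof of this characterization and the density argument is essentially identical to \cite[Page 18]{aronna2021} and \cite[page 273]{casas2012}, the remaining details are routine, and I would cite those references for the approximation step while spelling out the Taylor-expansion argument above in full.
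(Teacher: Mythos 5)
Your proposal is correct and follows exactly the classical argument (second-order Taylor expansion along feasible directions in $D(u)$, then extension to the critical cone $C(u)$ via the $L^2$-continuity of $\mathcal{J}''(u)$ from Lemma \ref{cont1} and an $L^2$-density/truncation argument) that the paper itself invokes by simply citing \cite[pp. 246]{fredi2010} and related works rather than writing it out. In effect you have filled in the proof the paper delegates to the references, with no gap in the reasoning.
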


\begin{proof}
The proof is classical and can be found in \cite[pp. 246]{fredi2010} or \cite{kdz2022,kmw2022}.
\end{proof}

\begin{theorem}\label{hypothesis}
Let $u\in \mathcal{U}_{ad}$ be a control satisfying the first order optimality conditions \eqref{ineq}. Then the following hold:
\begin{enumerate}[(1)]
	\item The functional $\mathcal{J}:L^\infty(\omega_T)\to \R$ is of class $\mathcal{C}^2$. Furthermore, for every $u\in \mathcal{U}_{ad}$, there exists continuous extensions
	\begin{equation}\label{h1}
		\mathcal{J}'(u)\in \mathcal{L}(L^2(\omega_T),\R)\;\;\;\;\text{and}\;\;\; \mathcal{J}''(u)\in \mathcal{B}(L^2(\omega_T),\R).
	\end{equation}
	\item For any sequence $\left\{(u_k,v_k)\right\}_{k=1}^{\infty}\subset \mathcal{U}\times L^2(\omega_T)$ with $\|u_k-u\|_{L^2(\omega_T)}\to 0$ and $v_k \rightharpoonup v$ weakly in $L^2(\omega_T)$,
	\begin{equation}\label{h2}
		\dis	\mathcal{J}'(u)v=\lim_{k\to \infty} \mathcal{J}'(u_k)v_k,
	\end{equation}
	
	\begin{equation}\label{h3}
		\dis	\mathcal{J}''(u)v^2\leq \liminf_{k\to \infty} \mathcal{J}''(u_k)v_k^2,
	\end{equation}
	
	\begin{equation}\label{h4}
		\dis	\text{If}\; v=0, \;\; \text{then} \;\;\;\Lambda \liminf_{k\to \infty}\|v_k\|^2_{L^2(\omega_T)}\leq\liminf_{k\to \infty} \mathcal{J}''(u_k)v_k^2,
	\end{equation}
	for some $\Lambda>0$.
\end{enumerate}
\end{theorem}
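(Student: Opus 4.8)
The plan is to treat the three assertions in turn. Assertion~(1) will be a direct consequence of results already proved: Lemma~\ref{lemmeG} gives that $G:L^\infty(\omega_T)\to\mathbb{X}$ is of class $\mathcal{C}^2$, Proposition~\ref{diff4} then yields that $\mathcal{J}$ is twice continuously Fr\'echet differentiable with the representations \eqref{diff5}--\eqref{diff6}, and Lemma~\ref{cont1} provides, for every $u\in\mathcal{U}_{ad}$, the continuous extensions $\mathcal{J}'(u)\in\mathcal{L}(L^2(\omega_T),\R)$ and $\mathcal{J}''(u)\in\mathcal{B}(L^2(\omega_T),\R)$, so nothing new is needed there.

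For assertion~(2), I would fix the sequences as in the statement and set $y_k:=G(u_k)$, $q_k:=q(u_k)$ and $z_k:=G'(u_k)v_k$, the last one being the weak solution of \eqref{diff1} with $u=u_k$, $v=v_k$. Since $u_k\in\mathcal{U}_{ad}$, the family $\{u_k\}$ is bounded in $L^\infty(\omega_T)$, so Theorems~\ref{theoremexistence0}--\ref{theorembound} and Proposition~\ref{existenceadjoint} make $\{y_k\},\{q_k\}$ bounded in $W(0,T)\cap L^\infty(Q)$, and \eqref{est1} (whose constant is uniform over $\mathcal{U}_{ad}$) makes $\{z_k\}$ bounded in $W(0,T)$ because $\{v_k\}$ is bounded in $L^2(\omega_T)$. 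The Lipschitz estimates \eqref{estim20}--\eqref{estim21} then give $y_k\to y:=G(u)$ and $q_k\to q:=q(u)$ in $L^2(0,T;H^1(\Omega))$, hence in $L^2(Q)$; consequently $l(y_k)\to l(y)$ in $L^2(0,T)$ and, along a subsequence, $a(l(y_k)),a'(l(y_k)),a''(l(y_k))$ converge a.e.\ on $Q$ and boundedly to $a(l(y)),a'(l(y)),a''(l(y))$ by \eqref{as1}. Property \eqref{h2} is then immediate: $q_k+\mu u_k\to q+\mu u$ strongly in $L^2(\omega_T)$ while $v_k\rightharpoonup v$ weakly, so $\mathcal{J}'(u_k)v_k=\int_{\omega_T}(q_k+\mu u_k)v_k\,\dq\to\int_{\omega_T}(q+\mu u)v\,\dq=\mathcal{J}'(u)v$.

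The core step will be the passage to the limit in $z_k$. Up to a subsequence $z_k\rightharpoonup\hat z$ weakly in $W(0,T)$, hence by the compact embedding \eqref{comp} $z_k\to\hat z$ strongly in $L^2(Q)$ and $l(z_k)\to l(\hat z)$ in $L^2(0,T)$ with $\{l(z_k)\}$ bounded in $L^\infty(0,T)$. Passing to the limit in the weak formulation of \eqref{diff1} for $(u_k,v_k)$, every term converges --- the time and diffusion terms by weak $W(0,T)$-convergence, the reaction term $a(l(y_k))z_k$ and the nonlocal term $a'(l(y_k))\,y_k\,l(z_k)$ by combining the strong $L^2(Q)$-convergence of $z_k$, the bounded a.e.\ convergence of the coefficients and the uniform $L^\infty(Q)$-bound on $y_k$, and the right-hand side since $v_k\chi_\omega\rightharpoonup v\chi_\omega$ weakly in $L^2(Q)$ --- so $\hat z$ solves \eqref{diff1} with data $(u,v)$ and, by uniqueness, $\hat z=z_v:=G'(u)v$; as the limit is independent of the subsequence, the whole sequence satisfies $z_k\to z_v$ strongly in $L^2(Q)$. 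From this, $z_k\,l(z_k)\to z_v\,l(z_v)$ and $(l(z_k))^2\to(l(z_v))^2$ strongly in $L^2(Q)$ (using the $L^\infty(0,T)$-bound on $l(z_k)$), and together with $a'(l(y_k))q_k\to a'(l(y))q$ and $a''(l(y_k))y_kq_k\to a''(l(y))y\,q$ in $L^p(Q)$ for every $p<\infty$ (bounded in $L^\infty$ plus a.e.\ convergence), this gives $\int_Q F_kq_k\,\dq\to\int_Q Fq\,\dq$, with $F_k$, $F$ defined by \eqref{def0} for $(y_k,z_k)$, $(y,z_v)$, and also $\int_Q z_k^2\,\dq\to\int_Q z_v^2\,\dq$. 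Since $\mathcal{J}''(u_k)v_k^2=\int_Q F_kq_k\,\dq+\int_Q z_k^2\,\dq+\mu\|v_k\|_{L^2(\omega_T)}^2$, the first two terms converge, and $\liminf_k\mu\|v_k\|_{L^2(\omega_T)}^2\ge\mu\|v\|_{L^2(\omega_T)}^2$ by weak lower semicontinuity of the norm, we get
\begin{equation*}
\liminf_{k\to\infty}\mathcal{J}''(u_k)v_k^2=\int_Q Fq\,\dq+\int_Q z_v^2\,\dq+\liminf_{k\to\infty}\mu\|v_k\|_{L^2(\omega_T)}^2\ge\mathcal{J}''(u)v^2,
\end{equation*}
which is \eqref{h3}; and for \eqref{h4} one takes $v=0$, which forces $z_v=G'(u)0=0$ and $F\equiv0$, so the first two terms above tend to $0$ and $\liminf_k\mathcal{J}''(u_k)v_k^2=\mu\liminf_k\|v_k\|_{L^2(\omega_T)}^2$, i.e.\ \eqref{h4} holds with $\Lambda=\mu$.

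The step I expect to be the main obstacle is precisely this limit passage in the linearized, nonlocal, variable-coefficient equation for $z_k$ and, above all, in the quadratic nonlocal term $\int_Q F_kq_k\,\dq$: controlling the products $a'(l(y_k))z_k\,l(z_k)\,q_k$ and $a''(l(y_k))y_k(l(z_k))^2q_k$ genuinely requires the uniform $L^\infty(Q)$-bounds for the states $y_k$ and for the adjoint states $q_k$ furnished by Theorems~\ref{theorembound}, \ref{theorembound0} and Proposition~\ref{existenceadjoint}; once those are available, the rest reduces to the compact embedding \eqref{comp}, the Lipschitz continuity of $u\mapsto G(u)$ and $u\mapsto q(u)$ from Proposition~\ref{prop2}, and routine weak-strong convergence arguments.
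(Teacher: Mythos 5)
Your proposal is correct and follows essentially the same route as the paper: part (1) from Proposition \ref{diff4} and Lemma \ref{cont1}, and part (2) via the Lipschitz continuity of $u\mapsto G(u)$ and $u\mapsto q(u)$, the uniform bound \eqref{est1} on $z_{v_k}$, the compact embedding $W(0,T)\hookrightarrow L^2(Q)$, convergence of the nonlocal quadratic terms using the $L^\infty$ bounds on $y_k$ and $q(u_k)$, and weak lower semicontinuity of the norm to get \eqref{h3} and \eqref{h4} with $\Lambda=\mu$. If anything, you are more explicit than the paper in identifying the weak limit of $z_{v_k}$ as $G'(u)v$ by passing to the limit in the weak formulation of \eqref{diff1} and invoking uniqueness, a step the paper simply asserts.
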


\begin{proof}
The point $\textit{(1)}$ of Theorem \ref{hypothesis} follows from Proposition \ref{diff4} and Lemma \ref{cont1}.\\
We make the proof of the point $\textit{(2)}$ in three steps.\\
Let $\left\{(u_k,v_k)\right\}_{k=1}^{\infty}\subset \U_{ad}\times L^2(\omega_T)$ be a sequence such that $\|u_k-u\|_{L^2(\omega_T)}\to 0$ and $v_k \rightharpoonup v$ weakly in $L^2(\omega_T)$.\\
Let $y_k=:y(u_k)$ and $y:=y(u)$ be respectively the solutions of \eqref{model} with $v=u_k$ and $v=u$.
Then, using the Lipschitz continuity of the control-to-state operator and  the mapping $u\mapsto q(u)$ given in Proposition \ref{prop2}, we have that
\begin{equation}\label{k11}
	y_k\to y   \text{ strongly in }  L^2(0,T;H^1(\Omega))
\end{equation}
and
\begin{equation}\label{k12}
	q(u_k)\to q(u)   \text{ strongly in }  L^2(0,T;H^1(\Omega)).
\end{equation}
\noindent \textbf{Step 1.} We show \eqref{h2}.  Using the convergence \eqref{k12} and the strong convergence of $(u_k)_k$, it follows that  $\mu u_k+ q(u_k)\to \mu u+q(u)$ in $L^2(Q)$ as $k\to \infty$. By the expression of $\mathcal{ J}'$ given in \eqref{diff5}, we deduce that:
\begin{eqnarray*}
	\dis \lim_{k\to \infty} \mathcal{J}'(u_k)v_k=\lim_{k\to \infty}\int_{\omega_T}(\mu u_k+q(u_k))v_k\, \dq =\int_{\omega_T}(\mu u+q(u))v\, \dq= \mathcal{J}'(u)v.
\end{eqnarray*}
Thus \eqref{h2} is proved.\\
\noindent \textbf{Step 2.} We show \eqref{h3}. We write
\begin{equation}\label{d}
	\begin{array}{lllll}
		\dis 	\mathcal{J}''(u_k)v_k^2&=&\dis -2\int_{Q}a'(l(y_k(t)))q(u_k)z_{v_k}\left(\int_{\Omega}z_{v_k}(t,\theta)\,\d \theta\right)\dq\\
		&&\dis -\int_{Q}a''(l(y_k(t)))y_kq(u_k)\left(\int_{\Omega}z_{v_k}(t,\theta)\,\d \theta\right)^2\dq\\
		&&\dis +\int_{Q}z^2_{v_k}\, \dq+\mu\int_{\omega_T}|v_k|^2\, \dq.
	\end{array}
\end{equation}
$z_{v_k}$ is the unique weak solution of \eqref{diff1} with $u=u_k$ and $v=v_k$. On one of the hand, since the sequence $(u_k)_k$ is bounded in $L^2(\omega_T)$, we obtain from estimation \eqref{est1} , that the sequence $(z_{v_k})_k$ is  bounded in $W(0,T)$. Thus, up to a subsequence and as $k\to \infty$,  $(z_{v_k})_k$ converges weakly to $z_{v}$ in $W(0,T)$. Now, thanks to the compact embedding $W(0,T)\hookrightarrow L^2(Q)$ , we obtain that as $k\to \infty$
\begin{equation}\label{k10}
	z_{v_k}\to z_{v}   \text{ strongly in }  L^2(Q).
\end{equation}
On the other hand, we claim that as $k\to \infty$,
\begin{equation}\label{d0}
	\begin{array}{lllll}
		a'(l(y_k(t)))q(u_k)z_{v_k} \quad \text{converges strongly to }\quad a'(l(y(t)))q(u)z_{v}\quad \text{in}\quad L^2(Q)
	\end{array}
\end{equation}
and
\begin{equation}\label{d01}
	\begin{array}{lllll}
		a''(l(y_k(t)))q(u_k)y_{k} \quad \text{converges strongly to }\quad a''(l(y(t)))q(u)y\quad \text{in}\quad L^2(Q).
	\end{array}
\end{equation}
Indeed, using \eqref{as1}, we can show as in Theorem \ref{thmimp} that
\begin{equation}\label{k1}
	a'(l(y_k(\cdot)))\to a'(l(y(\cdot)))   \text{ strongly in }  L^2(Q)
\end{equation}
and
\begin{equation}\label{k2}
	a''(l(y_k(\cdot)))\to a''(l(y(\cdot)))   \text{ strongly in }  L^2(Q).
\end{equation}
Moreover, we  have
\begin{equation*}
	\begin{array}{lllll}
		\smallskip
		\|a'(l(y_k(t)))q(u_k)z_{v_k} - a'(l(y(t)))q(u)z_{v}\|_{L^2(Q)}\\
		\smallskip
		\leq \|a'(l(y_k(t))) - a'(l(y(t)))\|_{L^2(Q)}\|q(u_k)\|_{L^\infty(Q)}
		\smallskip \|z_{v_k}\|_{L^2(Q)}\\
		+M\|q(u_k)-q(u)\|_{L^2(Q)}\|z_{v_k}\|_{L^2(Q)}\\
		+M\|z_{v_k} -z_{v}\|_{L^2(Q)}\|q(u)\|_{L^\infty(Q)}.
	\end{array}
\end{equation*}
Passing to the limit as $k\to \infty$ in this latter inequality while using \eqref{k12}, \eqref{k10} and \eqref{k1} leads us to the claim \eqref{k1}. Arguing as the same, while using the convergences \eqref{k11}, \eqref{k12} and \eqref{k2}, we obtain \eqref{d01}.
Taking the limit as $k\to \infty$ in \eqref{d} and using the lower-semi continuity of the $L^2$-norm and the above convergences,  we deduce that
\begin{eqnarray*}
	\dis \lim_{k\to \infty}  \mathcal{J}''(u_k)v_k^2
	&\geq&\dis -2\lim_{k\to \infty}\int_{Q}a'(l(y_k(t)))q(u_k)z_{v_k}\left(\int_{\Omega}z_{v_k}(t,\theta)\,\d \theta\right)\dq\\
	&&\dis -\liminf_{k\to \infty}\int_{Q}a''(l(y_k(t)))y_kq(u_k)\left(\int_{\Omega}z_{v_k}(t,\theta)\,\d \theta\right)^2\dq\\
	&&\dis +\lim_{k\to \infty} \int_{Q}z^2_{v_k}\, \dq+\mu \liminf_{k\to \infty}\int_{\omega_T}|v_k|^2\, \dq\\
	&\geq&\dis -2\int_{Q}a'(l(y(t)))q(u)z_{v}\left(\int_{\Omega}z_{v}(t,\theta)\,\d \theta\right)\dq\\
	&&\dis -\int_{Q}a''(l(y(t)))yq(u)\left(\int_{\Omega}z_{v}(t,\theta)\,\d \theta\right)^2\dq\\
	&&\dis +\int_{Q}z^2_{v}\, \dq+\mu \int_{\omega_T}|v|^2\, \dq\\
	&=& \mathcal{J}''(u)v^2.
\end{eqnarray*}
Hence \eqref{h3} is obtained.

\noindent \textbf{Step 3.} Finally, we show \eqref{h4}.
If $v=0$, then in \eqref{d}, the first, the second and the third terms tend to $0$, except the last. Hence,
$$\Lambda  \liminf_{k\to \infty}\|v_k\|^2_{L^2(\omega_T)}\leq   \lim_{k\to \infty}  \mathcal{J}''(u_k)v_k^2,$$
with $\Lambda=\mu$. This completes the proof.
\end{proof}
We have  the following result giving second-order sufficient conditions for locally optimal solutions.	
\begin{theorem}\label{Quadratic growth}
Let $u\in \mathcal{U}_{ad}$ satisfy \eqref{ineq} and
\begin{equation}\label{cdt1}
	\mathcal{J}''(u)v^2> 0 \quad \forall v\in C(u)\backslash\{0\}.
\end{equation}
Then there are two constants $\gamma>0$ and $\eta>0$ such that the quadratic growth condition
\begin{equation}\label{cdt2}
	\mathcal{J}(v)\geq \mathcal{J}(u)+\frac{\eta}{2}\|v-u\|^2_{L^2(\omega_T)},\quad \forall v\in \mathcal{U}_{ad}\cap B^2_{\gamma}(u),
\end{equation}
holds. Hence $u$ is locally optimal in the sense of $L^2(\omega_T)$.
\end{theorem}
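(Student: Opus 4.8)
The plan is to argue by contradiction, following the standard template for second-order sufficient conditions in optimal control (as in \cite{casas2012,fredi2010}), exploiting the abstract properties \eqref{h1}--\eqref{h4} established in Theorem \ref{hypothesis}. Suppose that \eqref{cdt2} fails for every choice of the constants. Then, taking $\gamma=\eta=1/k$, we can extract a sequence $\{v_k\}_{k\ge 1}\subset\mathcal{U}_{ad}$ with $\|v_k-u\|_{L^2(\omega_T)}\le 1/k$, hence $v_k\to u$ in $L^2(\omega_T)$, and
\begin{equation*}
	\mathcal{J}(v_k)<\mathcal{J}(u)+\frac{1}{2k}\|v_k-u\|^2_{L^2(\omega_T)}.
\end{equation*}
Set $\rho_k:=\|v_k-u\|_{L^2(\omega_T)}>0$ (if $\rho_k=0$ for some $k$ there is nothing to prove) and $w_k:=(v_k-u)/\rho_k$, so $\|w_k\|_{L^2(\omega_T)}=1$. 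After passing to a subsequence we may assume $w_k\rightharpoonup w$ weakly in $L^2(\omega_T)$.

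First I would show $w\in C(u)$. Since each $v_k\in\mathcal U_{ad}$ and $\mathcal U_{ad}$ is convex, $w_k\in\mathrm{cl}_{L^2}(S(u))$, and this set is closed and convex, hence weakly closed, so $w\in\mathrm{cl}_{L^2}(S(u))$. To get $\mathcal J'(u)w=0$: from the first-order condition \eqref{ineq} we have $\mathcal J'(u)(v_k-u)\ge 0$, i.e. $\mathcal J'(u)w_k\ge 0$, and passing to the limit via \eqref{h2} gives $\mathcal J'(u)w\ge 0$; on the other hand, a second-order Taylor expansion of $\mathcal J$ at $u$ combined with the defining inequality for $v_k$ yields, after dividing by $\rho_k^2$,
\begin{equation*}
	\frac{1}{\rho_k}\mathcal J'(u)w_k+\frac12\mathcal J''(u+\theta_k\rho_k w_k)w_k^2<\frac{1}{2k},
\end{equation*}
for some $\theta_k\in(0,1)$; since $\mathcal J'(u)w_k\ge 0$ the first term is nonnegative, so $\limsup_k \mathcal J''(u+\theta_k\rho_k w_k)w_k^2\le 0$, and using \eqref{h3} (with $u_k:=u+\theta_k\rho_k w_k\to u$ in $L^2$) we get $\mathcal J''(u)w^2\le 0$. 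Meanwhile $\mathcal J'(u)w_k\ge 0$ together with $\frac{1}{\rho_k}\mathcal J'(u)w_k\le \frac{1}{2k}-\frac12\mathcal J''(u+\theta_k\rho_kw_k)w_k^2$, whose right-hand side is bounded, forces $\mathcal J'(u)w_k\to 0$ (since $\rho_k\to 0$), hence $\mathcal J'(u)w=0$ by \eqref{h2}. Thus $w\in C(u)$.

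Now I would distinguish two cases. If $w\neq 0$, then $w\in C(u)\setminus\{0\}$ and the hypothesis \eqref{cdt1} gives $\mathcal J''(u)w^2>0$, contradicting $\mathcal J''(u)w^2\le 0$. If $w=0$, then \eqref{h4} applies with $v_k:=w_k$, $\|w_k\|_{L^2(\omega_T)}=1$, giving
\begin{equation*}
	\Lambda=\Lambda\liminf_{k\to\infty}\|w_k\|^2_{L^2(\omega_T)}\le\liminf_{k\to\infty}\mathcal J''(u+\theta_k\rho_kw_k)w_k^2\le 0,
\end{equation*}
which contradicts $\Lambda>0$. In both cases we reach a contradiction, so \eqref{cdt2} must hold for suitable $\gamma,\eta>0$; the quadratic growth at $u$ in particular implies $\mathcal J(v)>\mathcal J(u)$ for $v\in\mathcal U_{ad}\cap B^2_\gamma(u)$, $v\neq u$, i.e. $u$ is a (strict) $L^2$-local solution. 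The main obstacle is the careful handling of the Taylor remainder: one must justify that $\mathcal J$ admits a second-order expansion with remainder $\frac12\mathcal J''(u_k)w_k^2$ at an intermediate point $u_k=u+\theta_k\rho_kw_k$ that still converges to $u$ in $L^2(\omega_T)$, and that $\mathcal J''$ evaluated along this sequence is controlled — this is exactly what properties \eqref{h1}, \eqref{h3}, \eqref{h4} of Theorem \ref{hypothesis} are designed to supply, so the proof reduces to invoking them in the right order.
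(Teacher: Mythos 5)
Your argument is correct, but it takes a different presentational route from the paper: the paper's proof of Theorem \ref{Quadratic growth} is a one-line appeal to the abstract second-order sufficiency result \cite[Theorem 2.3, page 265]{casas2012}, whose hypotheses are exactly the properties \eqref{h1}--\eqref{h4} verified in Theorem \ref{hypothesis}, whereas you reconstruct the proof of that abstract theorem itself via the standard contradiction/normalization argument ($v_k\in\mathcal U_{ad}$, $\rho_k=\|v_k-u\|_{L^2(\omega_T)}$, $w_k=(v_k-u)/\rho_k\rightharpoonup w$, then the dichotomy $w\neq0$ versus $w=0$ handled by \eqref{cdt1}+\eqref{h3} and by \eqref{h4} respectively). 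What the paper's citation buys is brevity; what your version buys is a self-contained proof that makes visible exactly where each ingredient enters: \eqref{h2} (or simply weak convergence against the fixed functional $\mathcal J'(u)\in\mathcal L(L^2(\omega_T),\R)$) for $\mathcal J'(u)w\ge0$ and the passage to the limit, \eqref{h3} for $\mathcal J''(u)w^2\le0$, and \eqref{h4} with $\Lambda=\mu$ in the degenerate case $w=0$. Two points you flag as "the main obstacle" are indeed the places needing a word of justification, and both are available in the paper: the second-order Taylor expansion with intermediate point $u_k=u+\theta_k(v_k-u)$ is legitimate because $\mathcal J$ is of class $\mathcal C^2$ on $L^\infty(\omega_T)$ (Proposition \ref{diff4}) and the whole segment $[u,v_k]$ lies in $\mathcal U_{ad}$, hence is uniformly bounded in $L^\infty(\omega_T)$; and the bound on $\mathcal J''(u_k)w_k^2$ used to deduce $\mathcal J'(u)w_k\to0$ follows from Lemma \ref{cont1}, whose constant is uniform over $\mathcal U_{ad}$ since it depends only on $\max(|\alpha|,|\beta|)$ and $\|y^0\|_{L^\infty(\Omega)}$. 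With these two remarks made explicit, your proof is complete and equivalent in substance to the paper's.
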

\begin{proof}
From Theorem \ref{hypothesis}, the assumptions of  \cite[Theorem 2.3, page 265]{casas2012} are fulfilled. Hence, \eqref{cdt2} holds.
\end{proof}

\section{Concluding remarks}
\label{conclusion}
In this paper, we studied the optimal control of a nonlinear parabolic system. In contrast to many studies, the  nonlinearity of the system \eqref{model} involves an nonlocal term and so the problem is more challenging. We proved rigorously some regularity results of systems considered and established the existence as well as the characterization of the optimal solutions. Because of the nonlinearity, the functional defined in \eqref{opt1} was non convex and the first order optimality conditions was necessary but not sufficient for the optimality. We then derived the second optimality conditions and obtained by the way, the local uniqueness of the optimal solutions. As a perspective, one could consider nonlocal problems with a control appearing nonlinearly in the state equation. More precisely, it means to consider, the right hand side of \eqref{model} on the form $F(x,y,v)$. This has been done recently in \cite{casas2020o} for an elliptic equation and in \cite{kmw2023frac} for class of semilinear fractional elliptic equations. One of the main novelty relies on the existence proof of optimal controls which differ from the standard one that is based on minimizing sequences of controls and their weak convergence. Here, the proof uses the measurable selection theorem \cite{aubin2009}. Another important task will be to derive the first- and second- order conditions for optimality.

\appendix
\renewcommand{\thesection}{A} 
\renewcommand{\theequation}{A.\arabic{equation}}    

\setcounter{equation}{0}  

\section*{Appendix A. Existence and uniqueness of solution to \eqref{generalin}}
By a change of variable $p=e^{-rt}y$, for a suitable $r>0$, which will be chosen later and for $y$ satisfying \eqref{generalin}. We consider the following problem:
\begin{equation}\label{generalin0}
\left\{
\begin{array}{rllll}
	\dis p_t-\Delta p+(r+a(l(z(t))))p+\tilde{a}_0\int_{\Omega}p(t,x)\dx &=& e^{-rt}f &\mbox{in}& Q,\\
	\dis \partial_{\nu}p&=&0  &\mbox{in}& \Sigma,\\
	p(0,\cdot)&=& y^0 &\mbox{in}& \Omega.
\end{array}
\right.
\end{equation}
We define the weak solution to the system \eqref{generalin0} as follows.
\begin{definition}\label{weakgenelin0}
Let $f,z\in L^2(Q)$, $\tilde{a}_0\in L^\infty(Q)$ and $y^{0}\in L^2(\Omega)$. Let $a(\cdot)$  satisfying the Assumption \ref{ass1}.  We say that a function $p\in L^2(0,T;H^1(\Omega))$  is a weak solution to \eqref{generalin0}, if the following equality
\begin{equation}\label{weakgenlin0}
	\begin{array}{lll}
		\dis \int_\Omega y^0\,\phi(0,\cdot)\, \dx+\int_{Q}e^{-rt}f\phi\,\dq&=&\dis -\int_{0}^{T} \left\langle \phi_t(t),p(t)\right\rangle_{(H^{1}(\Omega))^\star,H^1(\Omega)} \, \dt+ \int_Q \nabla \phi\cdot\nabla p\, \dq\\
		&+&\dis \int_Q( r+a\left(l(z(t))\right)) \phi p\, \dq+\dis \int_Q\left(\tilde{a}_0 \dis \int_{\Omega}p(t,\sigma)\d\sigma\right)\phi\, \dq\\
	\end{array}
\end{equation}
holds for every $\phi \in H(Q):=\{\varphi\in W(0,T): \phi(T,\cdot)=0\}$.
\end{definition}
We have the following result.

\begin{theorem}\label{theoremexistencelin0}
Let $f, z\in L^2(Q)$, $\tilde{a}_0\in L^\infty(Q)$ such that $\|\tilde{a}_0\|_{L^\infty(Q)}\leq \tilde{C}$, for some $\tilde{C}>0$ and $y^{0}\in L^2(\Omega)$. Let $a(\cdot)$ satisfying the Assumption \ref{ass1}. Then, there exists a unique weak solution $p \in W(0,T)$ to \eqref{generalin0} in the sense of Definition \ref{weakgenelin0}.
In addition, the following estimations hold

\begin{equation}\label{estimationgenlin0}
	\|p\|_{\C([0,T];L^2(\Omega))} \leq \left(\frac{1}{a_0}\|f\|^2_{L^2(Q)}+\|y^0\|^2_{L^2(\Omega)}\right)^{1/2},
\end{equation}
\begin{equation}\label{estimationgenlin1}
	\|p\|_{L^2(0,T;H^1(\Omega))} \leq \frac{1}{\sqrt{\min(1,a_0)}}\left(\frac{1}{a_0}\|f\|^2_{L^2(Q)}+\|y^0\|^2_{L^2(\Omega)}\right)^{1/2},
\end{equation}
and
\begin{equation} \label{estimationint3}
	\dis \|p_t\|_{L^2(0,T;(H^1(\Omega))^\star)}\leq \frac{1}{\sqrt{\min(1,a_0)}} \left(1+2\tilde{C}|\Omega|+a_1\right)\left(\frac{1}{a_0}\|f\|^2_{L^2(Q)}+\|y^0\|^2_{L^2(\Omega)}\right)^{1/2}+\|f\|_{L^2(Q)}.
\end{equation}
\end{theorem}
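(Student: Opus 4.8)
The plan is to establish existence and uniqueness of the weak solution $p\in W(0,T)$ to the linear problem \eqref{generalin0} by the Faedo--Galerkin method, and then to derive the three estimates \eqref{estimationgenlin0}--\eqref{estimationint3} from energy identities. First I would set up a Galerkin scheme: take an orthonormal basis $\{e_j\}_{j\geq 1}$ of $L^2(\Omega)$ consisting of eigenfunctions of $-\Delta$ with homogeneous Neumann boundary conditions (so that $\{e_j\}$ is orthogonal in $H^1(\Omega)$ as well), and look for approximate solutions $p_m(t)=\sum_{j=1}^m g_j^m(t)e_j$ solving the finite-dimensional ODE system obtained by testing \eqref{generalin0} against $e_1,\dots,e_m$. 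Since the coefficient $r+a(l(z(t)))$ is bounded and measurable in $t$ (by Assumption \ref{ass1} and $z\in L^2(Q)$) and $\tilde a_0\in L^\infty(Q)$, the ODE system has a unique absolutely continuous solution on $[0,T]$ by Carath\'eodory's theorem.

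Next I would derive the a priori bounds. Testing with $p_m$ itself, I obtain
\begin{equation*}
\frac{1}{2}\frac{d}{dt}\|p_m(t)\|^2_{L^2(\Omega)}+\|\nabla p_m(t)\|^2_{L^2(\Omega)}+\int_\Omega\big(r+a(l(z(t)))\big)|p_m(t)|^2\,\dx+\int_\Omega\Big(\tilde a_0\int_\Omega p_m(t,\sigma)\,\d\sigma\Big)p_m(t)\,\dx=\int_\Omega e^{-rt}f(t)p_m(t)\,\dx.
\end{equation*}
Using $a(l(z))\geq a_0$, bounding the nonlocal term by $\tilde C|\Omega|\|p_m(t)\|^2_{L^2(\Omega)}$ via Cauchy--Schwarz, and choosing $r$ large enough that $r-\tilde C|\Omega|\geq 0$ (this is where the "suitable $r>0$" is fixed), then applying Young's inequality $\int_\Omega e^{-rt}f p_m\leq \tfrac{1}{2a_0}\|f(t)\|^2_{L^2(\Omega)}+\tfrac{a_0}{2}\|p_m(t)\|^2_{L^2(\Omega)}$ and absorbing, integrating on $(0,t)$ gives
\begin{equation*}
\|p_m(t)\|^2_{L^2(\Omega)}+2\min(1,a_0)\int_0^t\|p_m(s)\|^2_{H^1(\Omega)}\,\ds\leq \|y^0\|^2_{L^2(\Omega)}+\frac{1}{a_0}\|f\|^2_{L^2(Q)},
\end{equation*}
which yields \eqref{estimationgenlin0} and \eqref{estimationgenlin1} uniformly in $m$. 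For the derivative bound, I test \eqref{generalin0} against an arbitrary $\psi\in H^1(\Omega)$ with $\|\psi\|_{H^1(\Omega)}\leq 1$, split $\psi$ into its projection onto $\mathrm{span}\{e_1,\dots,e_m\}$ and the remainder (the latter contributes nothing since $\langle (p_m)_t,e_j\rangle$ vanishes for $j>m$ and $\nabla p_m$ is orthogonal to the remainder), and estimate $|\langle (p_m)_t(t),\psi\rangle|$ by $\|\nabla p_m(t)\|_{L^2(\Omega)}+(1+2\tilde C|\Omega|+a_1)\|p_m(t)\|_{L^2(\Omega)}+\|e^{-rt}f(t)\|_{L^2(\Omega)}$; taking $L^2(0,T)$ norms and inserting the previous bounds produces \eqref{estimationint3}.

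With these uniform bounds I would extract a subsequence $p_m\rightharpoonup p$ weakly in $L^2(0,T;H^1(\Omega))$ with $(p_m)_t\rightharpoonup p_t$ weakly in $L^2(0,T;(H^1(\Omega))^\star)$, so $p\in W(0,T)$; passing to the limit in the Galerkin identity (all terms are linear in $p_m$, and the coefficient functions are fixed in $L^\infty$, so weak convergence suffices) and using density of $\bigcup_m\mathrm{span}\{e_1,\dots,e_m\}$ shows $p$ satisfies \eqref{weakgenlin0} for all $\phi\in H(Q)$; the continuity $W(0,T)\hookrightarrow C([0,T];L^2(\Omega))$ handles the initial datum and guarantees the norms in \eqref{estimationgenlin0} make sense, while weak lower semicontinuity of norms transfers the estimates from $p_m$ to $p$. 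Uniqueness follows by taking the difference of two solutions, which satisfies \eqref{generalin0} with $f=0$, $y^0=0$; testing with the solution itself and using exactly the same coercivity ($r-\tilde C|\Omega|\geq 0$, $a(l(z))\geq a_0$) forces it to vanish. The main obstacle I anticipate is purely bookkeeping: tracking the constants carefully enough to land precisely on the stated closed-form bounds (in particular justifying the factor $\frac{1}{\sqrt{\min(1,a_0)}}$ and the coefficient $1+2\tilde C|\Omega|+a_1$ in \eqref{estimationint3}), and making sure the choice of $r$ needed for coercivity is compatible with the statement — here the theorem's constants do not depend on $r$ because $r$ only enters to make a sign term nonnegative and is then discarded, so the bounds are genuinely $r$-free.
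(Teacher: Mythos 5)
Your Galerkin argument is correct, but it is a genuinely different route from the paper's. The paper does not approximate at all: it applies Lions' existence theorem (Theorem 1.1 in Lions, p.~37) to the bilinear form $\mathcal{E}(p,\phi)$ defined on $L^2(0,T;H^1(\Omega))\times H(Q)$, checking continuity of $p\mapsto\mathcal{E}(p,\phi)$ and coercivity of $\mathcal{E}(\phi,\phi)$ on $H(Q)$ (with the same choice $r=\tilde{C}|\Omega|$ you make, so that the nonlocal term is absorbed); this yields a solution in $L^2(0,T;H^1(\Omega))$ directly, after which $p_t\in L^2(0,T;(H^1(\Omega))^\star)$, the bounds \eqref{estimationgenlin0}--\eqref{estimationint3}, and uniqueness are obtained a posteriori by testing the equation, exactly as in your Steps on the derivative bound, the energy estimate and the homogeneous difference. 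What the Lions route buys is brevity: no basis, no ODE solvability, no passage to the limit, and the $L^2(\Omega)$/$H^1(\Omega)$-orthogonality of a special eigenbasis is never needed. What your Faedo--Galerkin route buys is a constructive approximation and uniform a priori bounds obtained before existence, at the price of the extra verifications you correctly identify: Carath\'eodory solvability of the linear ODE system, the projection argument $\langle (p_m)_t,\psi\rangle=((p_m)_t,\psi_m)_{L^2(\Omega)}$ together with $\|\psi_m\|_{H^1(\Omega)}\leq\|\psi\|_{H^1(\Omega)}$ for the derivative bound, and recovery of the initial datum by integrating by parts in time in the limit identity \eqref{weakgenlin0}. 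Two small bookkeeping remarks: your integrated energy inequality cannot carry the factor $2\min(1,a_0)$ (your own computation gives $\min(2,a_0)$, and $\min(1,a_0)$ as in the paper is what is needed for \eqref{estimationgenlin1}, so nothing is lost); and in the $p_t$ estimate the remainder $\psi-\psi_m$ is eliminated solely through the $L^2$-orthogonality of $(p_m)_t$ to it, not through orthogonality of $\nabla p_m$ --- the lower-order and forcing terms are never tested against the remainder because you use the Galerkin relation with $\psi_m$ itself. Neither point affects the validity of your proof or the stated constants in \eqref{estimationint3}.
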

\begin{proof}We proceed in four steps.\par 	
\textbf{Step 1.}  We prove the existence by using Theorem 1.1 \cite[Page 37]{lions2013}.
Recall that the norm on  $L^2(0,T;H^1(\Omega))$ is given by
$$\|p\|^2_{L^2(0,T;H^1(\Omega))}=\int_0^T \|p(\cdot,t)\|^2_{H^1(\Omega)}\, dt.$$
We consider the norm defined on  $H(Q)$ by $\|p\|^2_{H(Q)}:=\|p\|^2_{L^2(0,T;H^1(\Omega))}+\|p(0,\cdot)\|^2_{L^2(\Omega)}.$
It is clear that we have the continuous embedding $H(Q)\hookrightarrow L^2(0,T;H^1(\Omega))$.\par 		
Now, let $\varphi \in H(Q)$ and consider the bilinear form $\mathcal{E}(\cdot,\cdot):L^2(0,T;H^1(\Omega))\times H(Q)\to\R$ given by
\begin{equation}\label{defCalE}
	\begin{array}{lll}
		\dis	\mathcal{E}(z,\phi)&=& \dis -\int_{0}^{T} \left\langle \phi_t(t),p(t)\right\rangle_{(H^{1}(\Omega))^\star,H^1(\Omega)} dt+ \int_Q \nabla \phi\nabla p\, \dq+\int_Q( r+a\left(l(z(t))\right)) \phi p\, \dq\\
		&&+\dis \int_Q\left(\tilde{a}_0 \dis \int_{\Omega}p(t,\sigma)\d\sigma\right)\phi\, \dq.
	\end{array}
\end{equation}
First, we note that
\begin{equation}\label{1}
	\left|\int_Q\left(\tilde{a}_0 \dis \int_{\Omega}p(t,\sigma)\d\sigma\right)\phi\, \dq\right|\leq \tilde{C}|\Omega|\|p\|_{L^2(Q)}\|\phi\|_{L^2(Q)}.
\end{equation}
Using Cauchy Schwarz's inequality and \eqref{as1},  we get that
\begin{align*}
	\dis |\mathcal{E}(p,\phi)|
	\leq  \Big[\|\phi_{t}\|_{L^2(0,T;(H^1(\Omega))^\star)}+\left(r+\tilde{C}|\Omega|+a_1\right)\|\phi\|_{L^2(Q)}+\|\phi\|_{L^2(0,T;H^1(\Omega))}\Big]\|p\|_{L^2(0,T;H^1(\Omega))}.
\end{align*}
Consequently, for every fixed $\phi\in H(Q),$
the functional  $p\mapsto \mathcal{E}(p,\phi)$ is continuous on $L^2(0,T;H^1(\Omega)).$\par 			
Next, thanks to \eqref{as1}, \eqref{1} and choosing $r=\tilde{C}|\Omega|$,  we have that for every  $\phi\in H(Q)$,
$$\begin{array}{rlll}
	\mathcal{E}(\phi,\phi)&=&\dis-\int_{0}^{T} \left\langle \phi_t(t),\phi(t)\right\rangle_{(H^{1}(\Omega))^\star,H^1(\Omega)} \, \dt+ \int_Q |\nabla \phi|^2\, \dq+\int_Q( r+a\left(l(z(t))\right)) \phi^2\, \dq\\
	&&+\dis \int_Q\left(\tilde{a}_0 \dis \int_{\Omega}\phi(t,\sigma)\d\sigma\right)\phi\, \dq\\
	&\geq& \dis \frac 12  \|\phi(0,\cdot)\|^2_{L^2(\Omega)}+ \min (1,a_0)\|\phi\|^2_{L^2(0,T;H^1(\Omega))}\\
	&\geq&\min (1/2,a_0) \|\phi\|^2_{H(Q)}.
\end{array}
$$
Hence, $\mathcal E$ is coercive on $H(Q)$.\par
Finally, let us consider the functional $L:H(Q)\to \R$ defined by
$$
L(\phi):=\dis \int_\Omega y^0\,\phi(0,\cdot)\, \dx+\int_{Q}e^{-rt}f\phi\,\dq.
$$
Using Cauchy Schwarz's inequality,  there is a constant $C>0$ such that
$$
|L(\phi)|
\leq  C\left( \|y^0\|_{L^2(\Omega)}+\|f\|_{L^2(Q)}\right)\|\phi\|_{H(Q)}.
$$
Therefore,  $L(\cdot)$ is continuous and linear on $H(Q)$. It follows from Theorem 1.1 \cite[Page 37]{lions2013} that there exists $p\in L^2(0,T;H^1(\Omega))$ such that
$\mathcal{E}(p,\phi)= L(\phi),\quad \forall \phi \in H(Q)$.
We have shown that the system \eqref{generalin0} has a solution $p\in L^2(0,T;H^1(\Omega))$ in the sense of Definition \ref{weakgenelin0}.\par	
\noindent\textbf{Step 2.} We show that $p_t\in L^2(0,T;(H^1(\Omega))^\star)$.\\
Since $p\in L^2(0,T;H^1(\Omega))$,  then
$\dis p_t=\Delta p-(r+a(l(z(t))))p-\tilde{a}_0\int_{\Omega}p(t,x)\dx+e^{-rt}f\in (H^1(\Omega))^\star$ for almost every $t\in (0,T)$. \\
If we take the duality map between the first equation in   \eqref{generalin0} and $\phi\in L^2((0,T);H^1(\Omega)),$
we arrive to
$$\begin{array}{rlll}
	\dis  \left\langle p_t(t),\phi(t)\right\rangle_{(H^1(\Omega))^\star,H^1(\Omega)}&=&\dis - \int_\Omega \nabla \phi\cdot\nabla p\, \dx-\int_\Omega( r+a\left(l(z(t))\right)) \phi p\, \dx\\
	&&-\dis \int_\Omega\left(\tilde{a}_0 \dis \int_{\Omega}p(t,\sigma)\d\sigma\right)\phi\, \dx+e^{-rt} \int_\Omega f(t)\phi(t)\,\dx.
\end{array}
$$
This implies that
$$
\begin{array}{llll}
	\dis |\left\langle p_t(t),\phi(t)\right\rangle_{(H^1(\Omega))^\star,H^1(\Omega)}| &\leq& \dis \dis \|\nabla p(t)\|_{L^2(\Omega)} \|\nabla \phi(t)\|_{L^2(\Omega)} + \left(2\tilde{C}|\Omega|+a_1\right)\|p(t)\|_{L^2(\Omega)}\|\phi(t)\|_{L^2(\Omega)}\\
	&&+\|f(t)\|_{L^2(\Omega)}\|\phi(t)\|_{L^2(\Omega)}
	.
\end{array}
$$
Integrating this inequality over $(0,T)$,  we can deduce that
\begin{equation} \label{estimationint3-1}
	\begin{array}{llll}
		\dis\int_0^T |\left\langle p_t(t),\phi(t)\right\rangle_{(H^1(\Omega))^\star,H^1(\Omega)}| \dt\\
		\leq \left[\left(1+2\tilde{C}|\Omega|+a_1\right)\|p\|_{L^2(0,T;H^1(\Omega))}+\|f\|_{L^2(Q)}\right]
		\|\phi\|_{L^2(0,T;H^1(\Omega))},
	\end{array}
\end{equation}
from which we obtain that
\begin{equation}\label{estcorpt}
	\dis \|p_t\|_{L^2(0,T;(H^1(\Omega))^\star)} \leq \left(1+2\tilde{C}|\Omega|+a_1\right)\|p\|_{L^2(0,T;H^1(\Omega))}+\|f\|_{L^2(Q)}.
\end{equation}
\noindent\textbf{Step 3.} We establish the estimates \eqref{estimationgenlin0}-\eqref{estimationint3}.
If we multiply the first equation in   \eqref{generalin0} by $p\in L^2((0,T);H^1(\Omega))$, then  use \eqref{1}, \eqref{as1}, the Young's inequality and the fact that $r=\tilde{C}|\Omega|$, we arrive to
$$\begin{array}{llll}
	\dis  \frac 12\frac{d}{dt}\|p(t)\|^2_{L^2(\Omega)}+\|p(t)\|^2_{H^1(\Omega)}+a_0\|p(t)\|^2_{L^2(\Omega)}
	\leq\frac{1}{2a_0}\|f(t)\|^2_{L^2(\Omega)}+\frac{a_0}{2}\|p(t)\|^2_{L^2(\Omega)}.
\end{array}
$$
Hence,
\begin{equation}\label{inter2}
	\frac{d}{dt}\|p(t)\|^2_{L^2(\Omega)}+\min(1,a_0)\|p(t)\|^2_{H^1(\Omega)}\leq \dis \frac{1}{a_0}\|f(t)\|^2_{L^2(\Omega)}.
\end{equation}
Integrating \eqref{inter2} over $(0,\tau)$, with $\tau\in [0,T]$, we get that
$$
\|p(\tau)\|^2_{L^2(\Omega)}+\min(1,a_0)\int_{0}^{\tau}\|p(t)\|^2_{H^1(\Omega)}\,dt\leq \frac{1}{a_0}\|f\|^2_{L^2(Q)}+\|y^0\|^2_{L^2(\Omega)},
$$
from which we deduce  \eqref{estimationgenlin0} and \eqref{estimationgenlin1}. Using \eqref{estimationgenlin1} in \eqref{estcorpt}, we obtain \eqref{estimationint3}.\par

\noindent \textbf{Step 4.} We prove the uniqueness of $p$ solution to \eqref{generalin0}.\\
Assume that there exist $p_1$ and $p_2$ solutions to \eqref{general} with the same right hand side $f$ and initial datum $y^0$.  Set $q:=e^{-rt}(p_1-p_2)$, with some $r>0$. Then  $q$ satisfies the following system
\begin{equation}\label{aa10}
	\left\{
	\begin{array}{rllll}
		\dis q_{t}-\Delta q+a\left(l(z(t))\right)q+\tilde{a}_0\int_{\Omega}q(t,x)\dx +rq &=&0& \mbox{in}& Q,\\
		\dis  \partial_\nu q&=&0& \mbox{in}& \Sigma, \\
		\dis  q(0,\cdot)&=&0 &\mbox{in}&\Omega.
	\end{array}
	\right.
\end{equation}
Multiplying the first equation in \eqref{aa1} by $q$, integrating by parts over $\Omega$, using \eqref{as1} and \eqref{1}, we arrive to
\begin{equation}\label{ajout010}
	\begin{array}{rllll}
		\dis \|q(T)\|^2_{L^2(\Omega)}+\|\nabla q\|^2_{L^2(Q)}+ (r+a_0-\tilde{C}|\Omega|)\|q\|^2_{L^2(Q)}
		\leq 0.
	\end{array}
\end{equation}

Now, choosing $r=\tilde{C}|\Omega|$ in \eqref{ajout010}, we deduce that
$q=0$ in $Q$. Hence $q_1=q_2$. This completes the proof.

\end{proof}
Now, coming back to  problem \eqref{generalin}, we have the following result.

\begin{corollary}\label{corollaryexistence}
Let $f, z\in L^2(Q)$, $\tilde{a}_0\in L^\infty(Q)$ such that $\|\tilde{a}_0\|_{L^\infty(Q)}\leq \tilde{C}$, for some $\tilde{C}>0$ and $y^{0}\in L^2(\Omega)$. Let $a(\cdot)$ satisfying the Assumption \ref{ass1}.  Then, there exists a unique weak solution $y \in W(0,T)$ to \eqref{generalin}.
In addition, the following estimates hold 	

\begin{subequations}\label{estimation22W0Tout}
	\begin{alignat}{11}
		\|y\|_{\C([0,T];L^2(\Omega))} \leq e^{\tilde{C}|\Omega|T}\left(\frac{1}{a_0}\|f\|^2_{L^2(Q)}+\|y^0\|^2_{L^2(\Omega)}\right)^{1/2} ,
		\label{estimation1S1}\\
		\|y\|_{L^2(0,T;H^1(\Omega))} \leq \frac{e^{\tilde{C}|\Omega|T}}{\sqrt{\min(1,a_0)}}\left(\frac{1}{a_0}\|f\|^2_{L^2(Q)}+\|y^0\|^2_{L^2(\Omega)}\right)^{1/2},
		\label{estimation1}
	\end{alignat}
	and
	\begin{equation}\label{estimation2}
		\begin{array}{llll}
			\dis \|y_t\|_{L^2(0,T;(H^1(\Omega))^\star)}\leq \frac{e^{2\tilde{C}|\Omega|T}}{\sqrt{\min(1,a_0)}}\left(1+3\tilde{C}|\Omega|+a_1\right)
			\left(\frac{1}{a_0}\|f\|^2_{L^2(Q)}+\|y^0\|^2_{L^2(\Omega)}\right)^{1/2}+\|f\|_{L^2(Q)}.
		\end{array}
	\end{equation}
\end{subequations}
\end{corollary}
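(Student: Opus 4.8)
The plan is to undo the exponential change of unknown that was used to pass from \eqref{generalin} to \eqref{generalin0}. Fix $r=\tilde{C}|\Omega|$. By Theorem~\ref{theoremexistencelin0} applied with this value of $r$, the problem \eqref{generalin0} has a unique weak solution $p\in W(0,T)$ in the sense of Definition~\ref{weakgenelin0}, and it satisfies \eqref{estimationgenlin0}, \eqref{estimationgenlin1} and \eqref{estimationint3}. I then set $y:=e^{rt}p$ and claim this is the desired solution of \eqref{generalin}.

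Since $t\mapsto e^{rt}$ is smooth and bounded on $[0,T]$, we have $y\in L^2(0,T;H^1(\Omega))$ and $y_t=re^{rt}p+e^{rt}p_t\in L^2\left(0,T;(H^1(\Omega))^\star\right)$, hence $y\in W(0,T)$, with $y(0,\cdot)=p(0,\cdot)=y^0$. To verify the weak formulation, I take an arbitrary $\phi\in H(Q)$; then $e^{rt}\phi\in H(Q)$ as well, since it vanishes at $t=T$ and inherits the regularity of $\phi$, so it is an admissible test function in \eqref{weakgenlin0}. Substituting $\psi:=e^{rt}\phi$ and $p=e^{-rt}y$ into \eqref{weakgenlin0}, using $\partial_t(e^{rt}\phi)=re^{rt}\phi+e^{rt}\phi_t$ and the elementary identities $\langle re^{rt}\phi(t),e^{-rt}y(t)\rangle=r\int_\Omega\phi(t)y(t)\,\dx$ and $\langle e^{rt}\phi_t(t),e^{-rt}y(t)\rangle=\langle\phi_t(t),y(t)\rangle$, the two terms carrying the factor $r$ cancel and the remaining identity is exactly the weak formulation \eqref{weakgene} of \eqref{generalin} (with $l(z)$ in place of $l(y)$). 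Hence $y$ is a weak solution of \eqref{generalin} lying in $W(0,T)$. Conversely, given any weak solution $\hat y\in W(0,T)$ of \eqref{generalin}, running the same computation in the opposite direction shows $e^{-rt}\hat y$ is a weak solution of \eqref{generalin0}; uniqueness for \eqref{generalin0} then yields uniqueness for \eqref{generalin}.

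It remains to transfer the estimates. From $y=e^{rt}p$ and $0\le t\le T$ we get $\|y(t)\|_{L^2(\Omega)}\le e^{\tilde{C}|\Omega|T}\|p(t)\|_{L^2(\Omega)}$ and $\|y(t)\|_{H^1(\Omega)}\le e^{\tilde{C}|\Omega|T}\|p(t)\|_{H^1(\Omega)}$ for a.e.\ $t$, so \eqref{estimationgenlin0} and \eqref{estimationgenlin1} immediately give \eqref{estimation1S1} and \eqref{estimation1}. For the last estimate I would argue directly on \eqref{generalin}: pairing its first equation with an arbitrary $\phi\in L^2(0,T;H^1(\Omega))$, using \eqref{as1} together with $\left|\int_\Omega\left(\tilde{a}_0\int_\Omega y(t,\sigma)\d\sigma\right)\phi(t)\,\dx\right|\le \tilde{C}|\Omega|\,\|y(t)\|_{L^2(\Omega)}\|\phi(t)\|_{L^2(\Omega)}$, and integrating over $(0,T)$ gives
\[
\|y_t\|_{L^2\left(0,T;(H^1(\Omega))^\star\right)}\le \left(1+a_1+\tilde{C}|\Omega|\right)\|y\|_{L^2(0,T;H^1(\Omega))}+\|f\|_{L^2(Q)};
\]
inserting \eqref{estimation1} and using $1+a_1+\tilde{C}|\Omega|\le 1+3\tilde{C}|\Omega|+a_1$ and $e^{\tilde{C}|\Omega|T}\le e^{2\tilde{C}|\Omega|T}$ yields \eqref{estimation2}. (Alternatively, one bounds $y_t=re^{rt}p+e^{rt}p_t$ term by term and invokes \eqref{estimationint3}.) The whole argument is routine; the only point requiring a little care is the bookkeeping in the weak-formulation equivalence, namely checking that $e^{rt}\phi$ remains an admissible test function and that the $r$-terms cancel exactly.
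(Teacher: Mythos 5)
Your proposal is correct and follows essentially the same route as the paper: apply Theorem~\ref{theoremexistencelin0} with $r=\tilde{C}|\Omega|$, undo the substitution via $y=e^{rt}p$ (you merely spell out the test-function bookkeeping that the paper asserts as an equivalence), and transfer \eqref{estimationgenlin0}--\eqref{estimationgenlin1} to get \eqref{estimation1S1}--\eqref{estimation1}. The only deviation is that you derive the bound on $\|y_t\|_{L^2(0,T;(H^1(\Omega))^\star)}$ directly from the equation for $y$ rather than by substituting into \eqref{estimationint3-1} as the paper does; this yields a slightly sharper constant that still implies \eqref{estimation2}, so the argument is fine.
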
	
\begin{proof}
Since $y=e^{\tilde{C}|\Omega|t}p$ is a weak solution of \eqref{generalin} if and only if $p$ is a weak solution of \eqref{generalin0}, we have from Theorem \ref{theoremexistencelin0} that there exists a unique solution $y \in W(0,T)$ of \eqref{generalin}.\par
Next,  letting $p=e^{-\tilde{C}|\Omega|t}y$  in \eqref{estimationgenlin0} and \eqref{estimationgenlin1} we  respectively deduce that \eqref{estimation1S1} and \eqref{estimation1} hold true.\par
Finally,  replacing $p$ by $e^{-\tilde{C}|\Omega|t}y$ in \eqref{estimationint3-1}, we deduce that 
$$
\begin{array}{llll}
	\dis \int_0^T |\left\langle y_t(t),\phi(t)\right\rangle_{(H^1(\Omega))^\star,H^1(\Omega)}| \dt\\
	\leq  \left[e^{\tilde{C}|\Omega|T}\left(1+3\tilde{C}|\Omega|+a_1\right)\|p\|_{L^2(0,T;H^1(\Omega))}+\|f\|_{L^2(Q)}\right]
	\|\phi\|_{L^2(0,T;H^1(\Omega))},\,\forall \phi\in L^2(0,T;H^1(\Omega)),
\end{array}
$$
which in view of  \eqref{estimation1} implies that 
$$
\begin{array}{llll}
	\dis \|y_t\|_{L^2(0,T;(H^1(\Omega))^\star)} \leq
	\frac{e^{2\tilde{C}|\Omega|T}}{\sqrt{\min(1,a_0)}}\left(1+3\tilde{C}|\Omega|+a_1\right)
	\left(\frac{1}{a_0}\|f\|^2_{L^2(Q)}+\|y^0\|^2_{L^2(\Omega)}\right)^{1/2}+\|f\|_{L^2(Q)}.
\end{array}
$$

This completes the proof.
\end{proof}

\bibliographystyle{abbrv}
\bibliography{mybibfile}
\end{document}